\documentclass[12pt,leqno]{amsart}

\usepackage[bookmarksopen,bookmarksdepth=3,colorlinks,citecolor=red,pagebackref,hypertexnames=true]{hyperref}
\usepackage[backrefs,msc-links,nobysame,initials]{amsrefs}

\usepackage[textsize=tiny]{todonotes}

\usepackage{xfrac}

\usepackage[T1]{fontenc}
\usepackage{lmodern}

\rmfamily
\usepackage{graphicx}
\usepackage{transparent}

\usepackage{mathtools,MnSymbol}
\mathtoolsset{showonlyrefs,showmanualtags}

\setlength{\textwidth}{16.6cm}
\setlength{\topmargin}{0cm}
\setlength{\oddsidemargin}{0cm}
\setlength{\evensidemargin}{0cm}
\allowdisplaybreaks

\theoremstyle{plain}

\newtheorem{lemma}[equation]{Lemma}
\newtheorem{proposition}[equation]{Proposition}
\newtheorem{theorem}[equation]{Theorem}
\newtheorem*{theorem*}{Theorem}
\newtheorem{corollary}[equation]{Corollary}
\theoremstyle{definition}
\newtheorem{definition}[equation]{Definition}

\newtheorem{remark}[equation]{Remark}

\numberwithin{equation}{section}

\def\norm#1.#2.{\lVert#1\rVert_{#2}}
\def\Norm#1.#2.{\bigl\lVert#1\bigr\rVert_{#2}}
\def\NOrm#1.#2.{\Bigl\lVert#1\Bigr\rVert_{#2}}
\def\NORm#1.#2.{\biggl\lVert#1\biggr\rVert_{#2}}
\def\NORM#1.#2.{\Biggl\lVert#1\Biggr\rVert_{#2}}

\def\diam{\textnormal{diam}}
\def\ind{\textnormal{\textbf 1}}

\def\R{\mathbb R}

\def\dil{\textnormal{dil}}
\DeclareMathOperator*{\esssup}{ess\,sup}
\def\dist{\operatorname{dist}}
\def\diam{\operatorname{diam}}

\def\ip#1,#2,{\langle #1,#2\rangle}
\def\Ip#1,#2,{\bigl\langle#1,#2\bigr\rangle}
\def\IP#1,#2,{\Bigl\langle#1,#2\Bigr\rangle}



\def\Abs#1{\bigl\lvert#1\bigr\rvert}

%
%
%
%
%
%
%
%
%
%
%
\begin{document}
\raggedbottom
\title[Tauberian conditions and Muckenhoupt weights]{Tauberian conditions, Muckenhoupt weights, and differentiation properties of weighted bases}

\author[P. Hagelstein]{Paul Hagelstein} \address{Paul Hagelstein, Department of Mathematics, Baylor University, Waco, Texas 76798, USA.}
\email{paul\_hagelstein@baylor.edu}

\author[T. Luque]{Teresa Luque}\address{Teresa Luque, Departamento de Analisis Matem\'atico, Facultad de Matem\'aticas, Universidad de Sevilla, 41080 Sevilla, Spain}
\email{tluquem@us.es}

\author[I. Parissis]{Ioannis Parissis}
\address{Ioannis Parissis.: Department of Mathematics, Aalto University, P.O.Box 11100, FI-00076 Aalto, Finland} \email{ioannis.parissis@gmail.com}

\thanks{P.H. is partially supported by the Simons Foundation grant 208831.}
\thanks{T.L. is supported by the Spanish Ministry of Economy and Competitiveness grant BES-2010-030264}
\thanks{I.P. is supported by the Academy of Finland, grant 138738.}

\keywords{strong maximal function, Tauberian condition, Muckenhoupt weight} \subjclass[2010]{Primary: 42B25, Secondary: 42B35}
\begin{abstract}  Let $\mathfrak{B}$ be a homothecy invariant collection of convex sets in $\mathbb{R}^{n}$.   Given a measure $\mu$, the associated weighted geometric maximal operator $M_{\mathfrak{B}, \mu}$ is defined by
\begin{align*}
      M_{\mathfrak{B}, \mu}f(x) \coloneqq \sup_{x \in B \in \mathfrak{B}}\frac{1}{\mu(B)}\int_{B}|f|d\mu.
\end{align*}
      It is shown that, provided $\mu$ satisfies an appropriate doubling condition with respect to $\mathfrak{B}$ and $\nu$ is an arbitrary locally finite measure, the maximal operator $M_{\mathfrak{B}, \mu}$ is bounded on $L^{p}(\nu)$ for sufficiently large $p$ if and only if it satisfies a Tauberian condition of the form
\begin{align*}
      \nu\big(\big\{x \in \mathbb{R}^{n} : M_{\mathfrak{B}, \mu}(\ind_E)(x) > \frac{1}{2} \big\}\big) \leq c_{\mu, \nu}\nu(E).
\end{align*}
As a consequence of this result we provide an alternative characterization of the class of Muckenhoupt weights $A_{\infty, \mathfrak{B}}$ for homothecy invariant Muckenhoupt bases $\mathfrak{B}$ consisting of convex sets.  Moreover, it is immediately seen that the strong maximal function $M_{\mathfrak{R}, \mu}$, defined with respect to a product-doubling measure $\mu$, is bounded on $L^{p}(\nu)$ for some $p > 1$ if and only if
\begin{align*}
\nu\big(\big\{x \in \mathbb{R}^{n} : M_{\mathfrak{R}, \mu}(\ind_E)(x) > \frac{1}{2}\big\}\big) \leq c_{\mu, \nu}\nu(E)\;
\end{align*}
holds for all $\nu$-measurable sets $E$ in $\mathbb{R}^{n}$.
 In addition, we discuss applications in differentiation theory, in particular proving that a $\mu$-weighted homothecy invariant basis of convex sets satisfying appropriate doubling and Tauberian conditions  must differentiate $L^{\infty}(\nu)$.

\end{abstract}
\maketitle

\section{Introduction} \label{s.introduction}
The study of weighted inequalities for classical operators such as the Hardy-Littlewood maximal function and the Hilbert transform has been an active area of research in harmonic analysis since the seminal paper of Muckenhoupt, \cite{Muck}. Here, by weighted inequalities we mean the study of the boundedness properties of an operator $T$ on some weighted Lebesgue space $L^p(w)$, where $w$ is an appropriate non-negative, locally integrable function, that is, a \emph{weight}. Indeed, the weights $w$ for which the Hardy-Littlewood maximal function, the Hilbert transform, as well as more general Calder\'on-Zygmund operators are bounded on $L^p(w)$ were identified in \cite{Muck} as well as in the subsequent works \cites{HMW,CoFe}; these investigations led to the definition of the $A_p$ classes of weights; see Definition~\ref{d.Ap}. The first quantitative estimate of the operator norm $\|M\|_{L^p(w)}$ on the \emph{$A_p$-constant of the weight} was proved in \cite{Buckley} for the Hardy-Littlewood maximal function $M$. In recent years, the corresponding question concerning the sharp dependence of the norm of a Calder\'on-Zygmund operator $\|T\|_{L^p(w)}$ on the $A_p$-constant of the weight has spurred an overwhelming amount of activity and development of relevant tools. Important representatives of these results include (but are not exhausted to) the work of Petermichl in \cite{Peter}, where the sharp bound for the Hilbert transform is proved, as well as the resolution of the celebrated $A_2$ conjecture by Hyt\"onen in \cite{Hyt} where the sharp bound is exhibited for general Calder\'on-Zygmund operators. Subsequent important developments and simplifications of the proof of the $A_2$ theorem can be found in \cite{HLP} and \cite{lernera2}. The two-weight problem for the Hilbert transform was also a notoriously hard problem, asking for necessary and sufficient conditions on a pair of weights $(v,w)$ so that the Hilbert transform is bounded from $L^p(w)$ to $L^p(v)$. The two-weight inequalities have been intensively investigated in several papers which led to the very recent resolution by Lacey in \cite{lacey}, following previous results by Lacey, Sawyer, Shen and Uriarte-Tuero in \cite{LSSU}.

All the results mentioned so far concern the classical or one-parameter theory, where the operators under study commute with one-parameter dilations of $\R^n$. On the other hand, the most basic example of the multi-parameter theory is the strong maximal function $M_\mathfrak R$, namely the maximal average of a  function with respect to $n$-dimensional rectangles with sides parallel to the coordinate axes. This operator is in many senses the prototype for multi-parameter analysis while there are also natural multi-parameter versions of the Hilbert transform as well as of more general singular integral operators. See for example \cite{RS}. As the terminology suggests, these operators commute with $n$-parameter dilations of $\R^n$. A general introduction to multi-parameter harmonic analysis is contained in \cite{Fef}. The basic weighted theory for the strong maximal function is also pretty well developed in a series of important papers such as \cite{F}, \cite{PiFe}, \cite{J}, \cite{JT} and \cite{Saw1}. A natural starting point for a more quantitative multi-parameter weighted theory would be the analogue of Buckley's theorem for the strong maximal function, namely, a sharp estimate on $\|M_{\mathfrak R}\|_{L^p(w)}$ in terms of the (strong) $A_p$-constant of the weight. No such quantitative estimate is currently known, a serious obstruction in carrying over the already described achievements of classical weighted theory to the multi-parameter setting.

A possible reason why such a sharp weighted theorem is elusive in the multi-parameter world is the failure of the Besicovitch covering argument for rectangles with arbitrary eccentricities. Indeed, it is an essential fact, underlying many of the sharp quantitative estimates in the classical weighted theory, that the (centered) Hardy-Littlewood maximal operator, defined with respect to a general measure, is \emph{always} bounded independently of the measure. This fails quite dramatically for the strong maximal function and this is just another manifestation of the failure of the Besicovitch covering argument. See \cite{F}. The relevance of this fact to proving bounds on the operator norms $\|M_{\mathfrak R} \|_{L^p(w)}$ is revealed by abstract theorems characterizing two-weight norm inequalities in terms of the boundedness of corresponding weighted maximal operators. For example it is implicit in Sawyer's two-weight norm inequalities for the Hardy-Littlewood maximal function in \cite{Saw} and for the strong maximal function in \cite{Saw1}. See also  \cite{J} and  \cite{lerner}.

\subsection*{The strong maximal function with respect to a measure} For the weighted strong maximal function $M_{\mathfrak R,w}$ one thus requires some restriction on the weight $w$ so that $M_{\mathfrak R,w}$ is bounded on $L^p(w)$. A sufficient condition is provided by the result of Fefferman, \cite{F}, that states that if a weight $w\in A_{\infty, \mathfrak R}$, that is if $w$ is a strong Muckenhoupt weight, then $M_{\mathfrak R, w}$ is bounded on $L^p(w)$. See \S \ref{s.overview} for precise definitions. It is thus no big surprise that many results in multi-parameter weighted theory begin with the assumption that $w\in A_{\infty,\mathfrak R}$. This assumption has also been used in order to derive the Fefferman-Stein inequality for the strong maximal function in \cite{Lin}, \cite{LP}, \cite{Mitsis} and \cite{Per1}. An apparently weaker substitute for the hypothesis $w\in A_{\infty,\mathfrak R}$, usually referred to as condition $(A)$, is that $w$ satisfies a \emph{Tauberian condition} of the form
\begin{align}\label{e.A}
\tag{$A$}	w(\{x\in\R^n: M_{\mathfrak R} (\ind_E)(x)>\frac{1}{2}\})\leq c w(E),
\end{align}
where $E\subset \R^n$ is any measurable set. This condition was introduced in \cite{J} in the context of weighted inequalities for quite general maximal functions. A consequence of our main theorem is however that \eqref{e.A} is equivalent to $w\in A_{\infty,\mathfrak R}$.

The previous discussion also motivates the seeking of conditions on a measure $\mu$ such that the strong maximal function $M_{\mathfrak R,\mu}$, defined with respect to the measure $\mu$, is bounded on $L^p(\mu)$. With more general results and precise definitions to follow, one of our main theorems implies:
\begin{theorem}\label{t.main} Let $\mu$ be a Borel non-negative measure which is locally finite and doubling with respect to rectangles with sides parallel to the coordinate axes. Then the operator $M_{\mathfrak R,\mu}$ satisfies
	\begin{align*}
		\mu(\{x\in\R^n: M_{\mathfrak R,\mu}(\ind_E)(x)>\frac{1}{2} \})\leq c \mu(E)
	\end{align*}
for every measurable set $E$ if and only if $M_{\mathfrak R,\mu}$ is bounded on $L^p(\mu)$ for some $p>1$.
\end{theorem}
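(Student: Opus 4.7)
My plan: The $(\Leftarrow)$ direction is an immediate consequence of Chebyshev's inequality: if $M_{\mathfrak R,\mu}$ is bounded on $L^p(\mu)$ with norm $A$, then for any measurable $E$,
\begin{equation*}
\mu\bigl(\{x\in\R^n: M_{\mathfrak R,\mu}(\ind_E)(x)>\tfrac{1}{2}\}\bigr) \le 2^p\,\|M_{\mathfrak R,\mu}(\ind_E)\|_{L^p(\mu)}^p \le (2A)^p\,\mu(E).
\end{equation*}
The substance of the theorem is the reverse implication: to produce some $p>1$ with $L^p(\mu)$-boundedness from the single Tauberian bound at level $1/2$. This is the $\nu=\mu$ specialisation of the general principle announced in the abstract, and the strategy I would pursue is as follows.

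First, I would self-improve the Tauberian condition to every level $0<\lambda<1$: for $\lambda\ge 1/2$ this is automatic by monotonicity, while for small $\lambda$ one runs a stopping-time decomposition on rectangles $R$ with $\mu(E\cap R)>\lambda\mu(R)$, using the product-doubling of $\mu$ to replace these by a comparable family of rectangles whose $\mu$-averages of $\ind_E$ exceed $1/2$, and then invoking the hypothesis. The output should be a quantitative estimate
\begin{equation*}
\mu\bigl(\{M_{\mathfrak R,\mu}(\ind_E)>\lambda\}\bigr)\le c(\lambda)\,\mu(E),\qquad 0<\lambda<1,
\end{equation*}
with $c(\lambda)$ depending only on $\lambda$ and the doubling constant of $\mu$.

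Next, I iterate: a stopping-time procedure on the nested level sets $\{M_{\mathfrak R,\mu}(\ind_E)>\lambda^k\}$ should produce a geometric decay of the form
\begin{equation*}
\mu\bigl(\{M_{\mathfrak R,\mu}(\ind_E)>\lambda^k\}\bigr)\le C^k\,\mu(E)
\end{equation*}
for some $C=C(\lambda)$. Choosing $\lambda$ small enough that $C\lambda^p<1$ for some $p>1$, a layer-cake summation yields a weak-$(p,p)$ inequality for $M_{\mathfrak R,\mu}$ acting on characteristic functions; decomposing an arbitrary $f\ge 0$ dyadically as $f=\sum_k 2^k \ind_{\{2^{k-1}<f\le 2^k\}}$ transfers the bound to all $f\in L^p(\mu)$. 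Marcinkiewicz interpolation with the trivial $L^\infty$ estimate then upgrades this to strong $L^q(\mu)$-boundedness for every $q>p$.

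The main obstacle is the propagation step: because $M_{\mathfrak R}$ fails to be weak-$(1,1)$ and no Besicovitch-type selection is available for rectangles of arbitrary eccentricity, both the self-improvement and the geometric-decay iteration must be driven purely by product-doubling rather than by any classical covering lemma. The quantitative crux is to keep the growth constant $C$ produced at each iteration small enough relative to $\lambda^{-p}$ that some exponent $p>1$ actually survives, which requires calibrating the doubling-based enlargement factor against the doubling constant of $\mu$ rather carefully.
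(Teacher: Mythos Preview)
Your overall architecture matches the paper's: the easy direction is Chebyshev, and for the hard direction one aims at a restricted weak type $(p_o,p_o)$ estimate
\begin{equation*}
\mu\bigl(\{M_{\mathfrak R,\mu}(\ind_E)>\lambda\}\bigr)\lesssim \lambda^{-p_o}\mu(E),\qquad 0<\lambda<1,
\end{equation*}
and then interpolates with $L^\infty$. Where your proposal falls short is the mechanism that produces this polynomial growth of the Tauberian constants, and this is exactly the content of the theorem.

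Your self-improvement step, as written, does not work. You propose to take a rectangle $R$ with $\mu(E\cap R)>\lambda\mu(R)$ and ``replace it by a comparable family of rectangles whose $\mu$-averages of $\ind_E$ exceed $1/2$'', using product-doubling. But doubling lets you \emph{enlarge} rectangles, which only lowers the average of $\ind_E$; and if instead you run a Calder\'on--Zygmund stopping inside $R$ to find subrectangles of density $>1/2$, their total $\mu$-measure is only $\simeq\lambda\mu(R)$, so they cannot cover $R$. Either way you do not obtain $R\subset\{M_{\mathfrak R,\mu}(\ind_E)>1/2\}$ or anything with comparable $\mu$-measure. Your iteration step has the same problem: the inclusion $\{M(\ind_E)>\lambda s\}\subset\{M(\ind_{\{M(\ind_E)>s\}})>1/2\}$, which would drive a geometric decay, is not true in general for small $\lambda$.

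The paper's device is to iterate the \emph{halo} operation at a \emph{fixed} level $\beta\in(\gamma,1)$: set $\mathcal H_\beta^0(E)=E$ and $\mathcal H_\beta^{k}(E)=\{M_{\mathfrak R,\mu}(\ind_{\mathcal H_\beta^{k-1}(E)})\ge\beta\}$, so that $\mu(\mathcal H_\beta^k(E))\le \mathbf c^k\mu(E)$ by the hypothesis. The crux (Lemma~\ref{l.main}, via Lemma~\ref{l.interm}) is the inclusion $R\subset \mathcal H_\beta^{k_{\alpha,\beta}}(E)$ whenever $\mu(E\cap R)/\mu(R)=\alpha$, with $k_{\alpha,\beta}\simeq\log(1/\alpha)$. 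This is obtained by a dyadic Calder\'on--Zygmund decomposition of $\ind_{E\cap R}$ at level $\beta$ inside $R$, followed by a \emph{continuous} expansion of each selected dyadic subrectangle $S_j$ through homothetic copies $S_{j,0}\subset S_{j,1}\subset\cdots$ with $\mu(S_{j,k+1})/\mu(S_{j,k})=1/\beta$ (this uses $\mu(\partial R)=0$, a consequence of doubling). After $N\simeq\log\Delta_\mu/\log(1/\beta)$ steps one has $S_j^{(1)}\subset S_{j,N+1}$; since the dyadic parents $S_j^{(1)}$ were \emph{not} selected, they carry density $\le\beta$, and this yields the gain $\mu\bigl(R\cap\mathcal H_\beta^{N+2}(E)\bigr)\ge\beta^{-1}\mu(E\cap R)$. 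Iterating this density gain $\sim\log(1/\alpha)$ times absorbs $R$ into $\mathcal H_\beta^{k_{\alpha,\beta}}(E)$. None of these ingredients --- the halo iteration at a fixed level, the continuous expansion calibrated to a fixed measure ratio, or the parent trick --- appears in your outline, and they are what makes the quantitative control you correctly identify as ``the crux'' actually go through.
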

This theorem can be thought of as a testing condition on the operator $M_{\mathfrak R,\mu}$. As it will become apparent, the constant $\frac{1}{2}$ is not important as it can be replaced by any fixed level $\gamma\in(0,1)$ in the statement of the theorem.

\subsection*{Differentiation with respect to bases of convex sets} A dual point of view and motivation for the investigations in this paper can be given in the language of differentiation theory. Given a collection of convex sets in $\R^n$ which is invariant under dilations and translations we want to study when the corresponding maximal operator differentiates $L^\infty(\R^n)$. It turns out that boundedness properties of quite general maximal operators can also be characterized in terms of Tauberian conditions in the spirit of \eqref{e.A}. Indeed, it is a classical result of Busemann and Feller, \cite{BF}, that a homothecy invariant basis $\mathfrak B$ consisting of open sets differentiates $L^\infty(\R^n)$ if and only if the corresponding maximal operator $M_{\mathfrak B}$ satisfies a Tauberian condition
\begin{align*}
 |\{x\in\R^n: M_{\mathfrak B}(\ind_E)(x)>\gamma\}|\leq c_\gamma |E|,
\end{align*}
for \emph{every} $\gamma \in(0,1)$ and for every measurable set $E$. See Theorem~\ref{t.bf} for the details. This point of view is discussed in detail in \cite{Guzdif} and taken up in \cite{HS}. In the last work it is shown that a homothecy invariant basis consisting of convex sets differentiates $L^\infty(\R^n)$ (with respect to the Lebesgue measure) if and only if it differentiates $L^p(\R^n)$ for some sufficiently large $p>1$. Note here that, lacking the convexity hypothesis on the basis one needs Tauberian conditions at all levels $\gamma \in (0,1)$. This should be contrasted to the results in \cite{HS} as well as in the current paper where the convexity assumption allows us to only assume a Tauberian condition at a \emph{fixed level}, say $\gamma=\frac{1}{2}$.

It is a natural question whether such results persist under the presence of a weight, or somewhat more generally, a measure $\mu$. More precisely, one seeks conditions on the measure $\mu$ and the basis $\mathfrak B$ so that the $\mu$-averages of a function $f\in L^p(\mu)$ converge to $f$ $\mu$-almost everywhere. If $\mathfrak B$ is some abstract basis of convex sets it is in general hard to tell whether $\mathfrak B$ differentiates functions in $L^p(\mu)$ with respect to $\mu$, for some $p>1$. For the basis of rectangles with sides parallel to the coordinate axes this is just a rephrasing of the question: ``When is the strong maximal function $M_{\mathfrak R,\mu}$, defined with respect to a measure $\mu$, bounded on $L^p(\mu)$ for some $p>1$?'' However our results address the more general case of homothecy invariant bases consisting of convex sets. Our most general theorem has the following form:

\begin{theorem} Let $\mathfrak B$ be a homothecy invariant basis consisting of open convex sets and assume that $\mu,\nu$ are locally finite, non-negative Borel measures on $\R^n$. Assume further that the measure $\mu$ is doubling with respect to $\mathfrak B$. Let $M_{\mathfrak B,\mu}$ denote the maximal operator with respect to the basis $\mathfrak B$ and the measure $\mu$. Then $M_{\mathfrak B,\mu}$ satisfies
	\begin{align*}
		\nu(\{x\in\R^n: M_{\mathfrak B,\mu}(\ind_E)(x)>\frac{1}{2} \})\leq c \nu(E)
	\end{align*}
if and only if $M_{\mathfrak B,\mu}$ is bounded on $L^p(\nu)$ for some $p>1$.
\end{theorem}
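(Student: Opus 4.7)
\emph{Proof plan.} One direction is immediate: if $M_{\mathfrak B,\mu}$ is bounded on $L^p(\nu)$ with norm $K$, then Chebyshev gives $\nu(\{M_{\mathfrak B,\mu}(\ind_E)>\frac{1}{2}\})\leq (2K)^p \nu(E)$, which is the Tauberian condition. The substantive direction is the converse, which I plan to establish in two stages: first self-improve the Tauberian condition to a quantitative bound at every level, then pass to $L^p$ via layer-cake.

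\emph{Stage 1 (self-improvement).} Set $\phi(\gamma)\coloneqq\sup_E \nu(\{M_{\mathfrak B,\mu}(\ind_E)>\gamma\})/\nu(E)$, so that the hypothesis is $\phi(\frac{1}{2})\leq c$. The aim is the polynomial bound
\begin{equation*}
\phi(\gamma)\leq C\gamma^{-\alpha},\qquad \gamma\in(0,1),
\end{equation*}
with $C,\alpha$ depending only on $c$ and the $\mathfrak B$-doubling constant of $\mu$. The mechanism rests on two geometric ingredients. Because $\mu$ is $\mathfrak B$-doubling, the basis differentiates $L^{1}(\mu)$, so for any convex $B\in\mathfrak B$ with $\mu(B\cap E)/\mu(B)>\gamma$, $\mu$-a.e.\ point of $B\cap E$ lies in $G\coloneqq\{M_{\mathfrak B,\mu}(\ind_E)>\frac{1}{2}\}$; hence $\mu(B\cap G)>\gamma\mu(B)$. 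Next, by homothety invariance, a concentric dilate $B^{*}\supset B$ still belongs to $\mathfrak B$, and $\mu$-doubling keeps $\mu(B^{*})/\mu(B)$ controlled, so that every point of $B^{*}$ sees density of $G$ at a strictly higher level $\gamma'>\gamma$. This produces a self-improving recursion anchored at $\phi(\frac{1}{2})\leq c$; iterating $k$ times yields a bound at scale $\gamma\sim 2^{-k}$, which rearranges into the desired polynomial estimate.

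\emph{Stage 2 (layer-cake and interpolation).} Since $M_{\mathfrak B,\mu}(\ind_E)\leq 1$, the layer-cake formula gives
\begin{equation*}
\|M_{\mathfrak B,\mu}(\ind_E)\|_{L^p(\nu)}^p = p\int_{0}^{1}\gamma^{p-1}\nu(\{M_{\mathfrak B,\mu}(\ind_E)>\gamma\})\,d\gamma \leq p\nu(E)\int_{0}^{1}\gamma^{p-1}\phi(\gamma)\,d\gamma,
\end{equation*}
which is finite for any $p>\alpha$, delivering a restricted strong-type $(p,p)$ bound. A standard Marcinkiewicz-type argument (interpolating with the trivial $L^\infty$ estimate on bounded functions) then upgrades this to a genuine strong $L^q(\nu)$ bound for every $q>p$.

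\emph{Main obstacle.} Stage 1 is the crux. The delicate part is to exploit the $\mu$-doubling \emph{quantitatively}, so that the ratio $\mu(B^{*})/\mu(B)$ is small enough that the concentric-dilate step actually produces a \emph{strictly} higher density and the recursion closes with polynomial dependence on $\gamma$. This is where convexity of $\mathfrak B$ (rather than mere homothety invariance) plays an essential role, mirroring its function in the unweighted Hagelstein--Stokolos result cited in the introduction.
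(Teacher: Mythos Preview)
Your overall architecture---self-improve the single-level Tauberian bound to a polynomial-in-$\gamma$ estimate, then layer-cake to restricted weak type and interpolate---matches the paper's. But Stage~1 as you describe it does not actually work, and the gap is exactly the heart of the theorem.

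First, the claim ``because $\mu$ is $\mathfrak B$-doubling, the basis differentiates $L^1(\mu)$'' is false in general: already for $\mathfrak B=\mathfrak R$ and $\mu$ Lebesgue (which is product-doubling) the strong maximal function fails to differentiate $L^1$. What \emph{is} true, and enough for your purpose, is that $\mu$-a.e.\ point of $E$ satisfies $M_{\mathfrak B,\mu}(\ind_E)=1$; this follows by restricting to the one-parameter family of homothetic copies of a single $B_0\in\mathfrak B$, which via the John ellipsoid is comparable to cubes and hence is weak $(1,1)$ for doubling $\mu$. So the conclusion $\mu(B\cap G)>\gamma\mu(B)$ survives, but not by the reasoning you give.

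Second, and more seriously, the ``concentric dilate'' step goes the wrong way. If $B^*\supset B$ with $\mu(B^*)\le\Delta\,\mu(B)$ and $\mu(B\cap G)>\gamma\mu(B)$, then every $y\in B^*$ satisfies $M_{\mathfrak B,\mu}(\ind_G)(y)\ge \mu(B^*\cap G)/\mu(B^*)\ge \gamma/\Delta$, a \emph{lower} level, not a higher one. There is no mechanism in your sketch that produces $\gamma'>\gamma$, so the recursion does not close. Merely knowing $E\subset G$ up to $\mu$-null sets gives $\mu(B\cap G)\ge\mu(B\cap E)$, not a strict quantitative gain.

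What the paper does instead is a Calder\'on--Zygmund stopping-time argument \emph{inside} each rectangle. One first passes from the convex basis $\mathfrak B$ to an associated homothecy-invariant basis $\mathfrak G_{\mathfrak B}$ of rectangles via the John ellipsoid, showing that the Tauberian condition transfers (this step is itself nontrivial and uses a packing lemma for homothetic copies of a convex set inside its associated rectangle, together with a doubling-measure estimate for the $\mu$-mass of an $\epsilon$-annulus around a convex body). Then, for rectangles, one uses the dyadic grid $\mathcal D_R$ generated by $R$: a CZ decomposition of $\ind_{E\cap R}$ at level $\beta$ produces maximal dyadic subrectangles $S_j$ with density $>\beta$; one then \emph{continuously} enlarges each $S_j$ (here doubling enters, both to control the growth and to guarantee continuity of $c\mapsto\mu(c*S_j)$) so that the enlargements land in the iterated halo $\mathcal H_\beta^{N+2}(E)$ and cover the dyadic parents $S_j^{(1)}$, which by maximality have density $\le\beta$. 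This yields the quantitative gain $\mu(R\cap\mathcal H_\beta^{N+2}(E))\ge\beta^{-1}\mu(R\cap E)$, and iterating gives $R\subset\mathcal H_\beta^{k_{\alpha,\beta}}(E)$ with $k_{\alpha,\beta}\sim\log(1/\alpha)$. That logarithmic count of halo iterations is exactly what converts $\nu(\mathcal H_\beta^k(E))\le c^k\nu(E)$ into your polynomial bound $\phi(\gamma)\lesssim\gamma^{-\alpha}$.

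In short: the skeleton is right, but the self-improvement requires a CZ/stopping-time argument in a dyadic grid, preceded by a John-ellipsoid reduction to rectangles; a bare dilation argument cannot supply the strict density gain.
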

Among other consequences, we get as a corollary a ``weighted'' version of the Busemann-Feller theorem:
\begin{corollary} Let $\mathfrak B$ be a homothecy invariant basis consisting of convex sets and $\mu,\nu$ be locally finite, non-negative measures on $\R^n$. Assume in addition that $\mu$ is doubling with respect to $\mathfrak B$. If
\begin{align*}
		\nu(\{x\in\R^n: M_{\mathfrak B,\mu}(\ind_E)(x)>\frac{1}{2} \})\leq c \nu(E)
\end{align*}
then $\mathfrak B$ differentiates $L^\infty(\nu)$ with respect to the measure $\mu$.
\end{corollary}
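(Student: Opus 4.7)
The plan is to reduce the corollary to the preceding theorem and then run the classical argument that derives pointwise differentiation from a maximal inequality. First, the theorem promotes the hypothesized Tauberian bound to a strong $L^p(\nu)$-bound for $M_{\mathfrak B,\mu}$ with some $p>1$; in particular it yields the weak-type inequality
\[
\nu(\{x\in\R^n : M_{\mathfrak B,\mu}f(x)>\lambda\})\le C\lambda^{-p}\int|f|^p\,d\nu
\]
for every $\lambda>0$. This is the analytic input for what follows.

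Next I would localize. Fix $f\in L^\infty(\nu)$ and $R>0$. Whenever $x\in B(0,R)$ and $B\in\mathfrak B$ contains $x$ with $\diam(B)\le R$, one has $B\subset B(0,2R)$, so the $\mu$-average of $f$ on such a $B$ coincides with that of $f\ind_{B(0,2R)}$. Since $\nu$ is locally finite, $f\ind_{B(0,2R)}\in L^p(\nu)\cap L^\infty(\nu)$, so it suffices to prove $\nu$-a.e.\ convergence of the $\mu$-averages on $B(0,R)$ for functions in $L^p(\nu)\cap L^\infty(\nu)$. A countable exhaustion $R\to\infty$ then delivers the corollary.

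For this, I would carry out the standard density argument. Continuous, compactly supported $g$ are dense in $L^p(\nu)$; for any such $g$, uniform continuity together with the inclusion $B\subset\bar B(x,\diam(B))$ forces $\mu(B)^{-1}\int_B g\,d\mu\to g(x)$ as $\diam(B)\to 0$ with $B\ni x$, at every $x$ at which the averages are defined on arbitrarily small $B$'s. Writing $f=g+(f-g)$ and applying the weak-type inequality above to $f-g$ bounds the $\nu$-measure of
\[
\bigl\{x : \limsup_{\diam(B)\to 0,\,B\ni x}\bigl|\mu(B)^{-1}\textstyle\int_B f\,d\mu-f(x)\bigr|>\lambda\bigr\}
\]
by $C\lambda^{-p}\|f-g\|_{L^p(\nu)}^p$. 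Letting $\|f-g\|_{L^p(\nu)}\to 0$ and then $\lambda\to 0$ shows that this $\limsup$ vanishes $\nu$-a.e.\ on $B(0,R)$, which is the desired differentiation statement.

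The main obstacle, as with the classical Busemann--Feller theorem, is the density-plus-truncation bookkeeping rather than any new analytic ingredient, the latter being packaged entirely into the preceding theorem. A minor technical point is ensuring that the averages are defined on a $\nu$-conull set: at every point of $\operatorname{supp}\mu$, the doubling property of $\mu$ with respect to $\mathfrak B$ yields $\mu(B)>0$ for all sufficiently small $B\in\mathfrak B$ containing that point, while at points outside $\operatorname{supp}\mu$ the differentiation claim is vacuous since no small $B\ni x$ has $\mu(B)>0$.
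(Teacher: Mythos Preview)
Your approach is correct and is essentially what the paper intends: the corollary is stated without proof, as an immediate consequence of the main theorem (Theorem~\ref{t.convdoubling}), and your argument spells out exactly the standard passage from an $L^p(\nu)$ bound on $M_{\mathfrak B,\mu}$ to $\nu$-almost-everywhere convergence of the $\mu$-averages via density of $C_c(\R^n)$ and the usual $\limsup$ splitting.

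Two small remarks. First, your ``minor technical point'' about $\mu(B)>0$ is in fact automatic here: since $\mu$ is doubling with respect to $\mathfrak B$, Lemma~\ref{l.convrect} makes it doubling with respect to a rectangle family, and then Proposition~\ref{p.noboundarymass}(i) gives $\mu(U)>0$ for every nonempty open set, so every $B\in\mathfrak B$ has positive $\mu$-measure. Second, for the $\mu$-averages of $f\in L^\infty(\nu)$ to be defined one tacitly works with a bounded Borel representative of $f$; this is harmless but worth stating, since the paper does not make the meaning of ``differentiates $L^\infty(\nu)$ with respect to $\mu$'' fully explicit.
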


\section{Notations} A few words concerning the notation used in this paper are necessary. Due to the technical nature of some of the proofs the notation becomes quite cumbersome but we have tried to be consistent with our choice of symbols. The current paragraph can be used as a guide to the notation and the reader is encouraged to consult it in order to avoid any sort of confusion.

We write $A\lesssim B$ whenever $A\leq C B$ for some constant $C>0$ and $A\simeq B$ if $A\lesssim B$ and $B\lesssim A$. We write $A\lesssim_n B$ whenever the implied constant depends on some parameter $n$. We omit such dependencies when they are of no importance.

In this paper we use several differentiation bases which are basically collections of open sets in $\R^n$. We use the symbol $\mathfrak B$ to denote a generic basis consisting of convex sets, the symbol $\mathfrak G$ to denote a generic basis consisting of rectangles, the symbol $\mathfrak R$ for the basis of all rectangles with sides parallel to the coordinate axes, the symbol $\mathfrak b$ for the collection of all Euclidean balls and the symbol $\mathfrak Q$ for the collection of all  cubes in $\R^n$ with sides parallel to the coordinate axes.

For a rectangle $R\in\mathfrak R$ we denote by $\mathcal D_R$ the mesh of ``dyadic rectangles'' associated to $R$. The ``dyadic children'' of $R$ are produced by dividing each side of $R$ into two equal parts while the dyadic parent of $R$ is the rectangle $R^{(1)}$ whose sidelengths are double the corresponding sidelengths of $R$ and shares exactly one corner with $R$. Thus every $R\subset \R^n$ has exactly $2^n$ dyadic children and is contained in a unique dyadic parent. For a dyadic rectangle $R$ we write $R^{(1)}$ for the parent of $R$ and $R^{(j)}$ for the ancestor of $R$ which is $j$ generations ``before'' $R$. For more details see the discussion before Proposition~\ref{p.dyadicmaximal}.

The measures $\mu,\nu$ that appear in this paper are always assumed to be locally finite, non-negative Borel measures. The symbol $\nu$ is used to denote such a measure in the underlying space. Thus we prove bounds on $L^p(\nu)$. We use the symbol $\mu$ to denote a measure which is additionally assumed to be \emph{doubling} with respect to some differentiation basis $\mathfrak B$. The doubling constant of $\mu$ with respect to $\mathfrak B$ is denoted by $\Delta_{\mu,\mathfrak B}$. We omit these indices when the definition of the underling measure or basis is clear from the context. The measure $\mu$ is typically used in order to define some maximal operator $M_{\mathfrak B,\mu}$. Thus our main questions concern the mapping property $M_{\mathfrak B,\mu}:L^p(\nu)\to L^p(\nu)$. We use the symbol $w$ to denote a non-negative locally integrable function, that is, a weight. In this case we write $M_{\mathfrak B,w}$ for the weighted maximal function with respect to $\mathfrak B$ and $w$.

Some words are also necessary concerning the dilations that we use in the paper. There are three kinds of dilations of a set $E$ with respect to some parameter $c>0$. If the set $E$ has a natural center of symmetry then $cE$ denotes the dilates of $E$ by a factor $c$, with respect to its center. If $B$ is a convex set we write $cB$ even if $B$ is not symmetric with respect to some point. In this case the homothecy center is taken to be the center of the John ellipsoid of $B$. See \S~\ref{s.john}. We write $\dil_cE\coloneqq \{cx:x\in E\}$, that is, for the dilation with respect to $0$. Finally we write $c*R$ whenever $R$ is a ``dyadic rectangle'' to denote the dilation of $R$ by the factor $c$, with respect to the corner shared by $R$ and its parent $R^{(1)}$. We also consider translations of sets; for $\sigma \in \R^n$ and $E\subset \R^n$ we set $\tau_\sigma E\coloneqq \{x+\sigma:x\in E\}$.

Finally, almost all the logarithms that appear in this paper are base-$2$ logarithms; we still just write $\log t$ for $\log_2 t$.

\section{Definitions and overview of known results} \label{s.overview}

\subsection{Bases of open sets and maximal operators} By a \emph{basis} $\mathfrak B$ we mean a collection of bounded open sets in $\R^n$. The differentiation properties of a basis $\mathfrak B$ are determined by the boundedness properties of the corresponding maximal function, acting on locally integrable functions $f$ as
\begin{align*}
	M_{\mathfrak B}f(x)\coloneqq \sup_{ \substack{B\in\mathfrak B \\ B\ni x}} \frac{1}{|B|}\int_B |f(y)| dy,
\end{align*}
if $x\in\cup_{B\in\mathfrak B}B$ and $M_{\mathfrak B}f(x)\coloneqq 0$ otherwise. Particular attention will be given to two special bases of open sets; namely the basis  $\mathfrak Q$, consisting of all $n$-dimensional cubes with sides parallel to the coordinate axes, and the basis $\mathfrak R$  consisting of all rectangles with sides parallel to the coordinate axes. The corresponding maximal operators are the \emph{Hardy-Littlewood maximal function} $M_{\mathfrak Q}$ and the \emph{strong maximal function}  $M_{\mathfrak R}$. We are interested in $L^p$-bounds for the maximal functions $M_{\mathfrak B}$ of the type
\begin{align}
	\label{e.basic} \|M_{\mathfrak B} f\|_{L^p(\R^n)}\lesssim_{\mathfrak B, p,n} \|f\|_{L^p(\R^n)},\quad 1<p\leq +\infty,
\end{align}
together with appropriate endpoint bounds as $p\to 1^+$. The existence of such bounds cannot be guaranteed in the generality of $\mathfrak B$ discussed above. Indeed, if $\mathfrak B$ is the family of \emph{all} rectangles in $\R^n$, allowing all rotations, dilations and translations, then $M_{\mathfrak B}$ is called the \emph{universal maximal function} which is known to be unbounded for any $p<+\infty$; see \cite{Guz}. Note, however, that this operator restricted to radial functions is bounded on  $L^p(\R^n)$ for $p>n$. See \cite{CHS} and \cite{DN}. On the other hand, the maximal operators defined with respect to the bases $\mathfrak R$ and $\mathfrak Q$ are well understood, with the corresponding sharp endpoint bounds being
\begin{align*}
 \Abs{\{x\in\R^n:M_{\mathfrak Q}f(x)>\lambda \}}&\lesssim_{n}\int_{\R^n} \frac{|f(x)|}{\lambda}dx,\quad \lambda>0,
\\
 \Abs{\{x\in\R^n:M_{\mathfrak R}f(x)>\lambda \}}&\lesssim_{n} \int_{\R^n} \Phi_n\Big(\frac{|f(x)|}{\lambda}\Big)dx,\quad \lambda>0.
\end{align*}
Here $\Phi_n(t)\coloneqq t(1+(\log^+t)^{n-1})$ and $\log^+t\coloneqq \max(\log t,0)$. The first weak inequality above is the classical maximal theorem of Hardy and Littlewood; see for example \cite{Stein}. The second distributional inequality is the strong maximal theorem of Jessen, Marcinkiewicz and Zygmund from \cite{JMZ}. See also \cite{CF} for a geometric approach to the same result. By interpolation, the previous endpoint bounds imply \eqref{e.basic} for both $\mathfrak Q$ and $\mathfrak R$.

\subsection{Weights associated to bases} We say that $w$ is a \emph{weight} associated to the basis $\mathfrak B$ if $w$ is a non-negative locally integrable function on $\mathbb R^n$ and $w(B)\coloneqq \int_B w(x)dx<+\infty$ for every $B\in\mathfrak B$. The weighted analogue of estimate \eqref{e.basic} takes the form
\begin{align}\label{e.basicw}
  \|M_{\mathfrak B} f\|_{L^p(w)}\lesssim_{\mathfrak B, p,n} \|f\|_{L^p(w)},\quad 1<p\leq +\infty.
\end{align}
The corresponding endpoint bounds as $p\to 1^+$ are also of great interest and are typically harder (and stronger) than their $L^p$-analogues \eqref{e.basicw}.

Again, for the bases $\mathfrak Q$ and $\mathfrak R$, estimates \eqref{e.basicw} are much better understood and the validity of \eqref{e.basicw} for any $1<p<+\infty$ is characterized by the membership of $w$ to the classes of Muckenhoupt weights $A_{p,\mathfrak B}$:

\begin{definition}\label{d.Ap}
We say that a weight $w$ belongs to the class $A_{p,\mathfrak B}$, $1<p<+\infty$, if
\begin{align*}
	[w]_{A_{p,\mathfrak B}}\coloneqq \sup_{B\in\mathfrak B}\bigg(	\frac{1}{|B|}\int_B w(y)dy\bigg)\bigg(\frac{1}{|B|}\int_B w(y)^{1-p'}dy\bigg)^{p-1}<+\infty.
\end{align*}
Here and throughout the paper $p'$ denotes the dual exponent of $p$, that is $\frac{1}{p}+\frac{1}{p'}=1.$

For the limiting case $p=1$ the class $A_{1,\mathfrak B}$ is defined to be the set of weights $w$ such that
\begin{align*}
[w]_{A_{1,\mathfrak B}}\coloneqq	\sup_{B\in\mathfrak B}\bigg(\frac{1}{|B|}\int_B w(y)dy \bigg) \esssup_{B} (w^{-1})<+\infty.
\end{align*}
This is equivalent to $w$ having the property
\begin{align*}
	M_{\mathfrak B} w(x)\leq [w]_{A_{1,\mathfrak B}}\cdot w(x),\quad\text{a.e.}\ x\in\R^n.
\end{align*}
It follows from H\"older's inequality and the definitions above that for all $1\leq p< q<+\infty$ we have that $A_{p,\mathfrak B}\subset A_{q,\mathfrak B}$, that is, the classes $A_{p,\mathfrak B}$ are increasing in $p\geq 1$. It is thus natural to define the limiting class $A_{\infty,\mathfrak B}$ as
\begin{align*}
A_{\infty,\mathfrak B}\coloneqq \bigcup_{p> 1} 	A_{p,\mathfrak B}=\bigcup_{p\geq 1} 	A_{p,\mathfrak B}.
\end{align*}
For the special bases $\mathfrak Q,\mathfrak R$ we use the shorthand notation $A_p\coloneqq A_{p,\mathfrak Q}$ and $A_p ^*\coloneqq A_{p,\mathfrak R}$.
\end{definition}

\begin{remark} For a general basis $\mathfrak B$, as considered here, there is really no ``obvious'' definition of the class $A_{\infty,\mathfrak B}$. For the basis $\mathfrak Q$ many definitions have appeared in the literature and they are all equivalent to each other. See for example \cite{GaRu}. This remains true for the basis $\mathfrak R$. However, for a general basis $\mathfrak B$, these different definitions may define different classes of weights. We adhere to the definition of the class $A_{\infty,\mathfrak B}$ as the union of the classes $A_{p,\infty}$ for notational simplicity mostly, keeping in mind that for the bases $\mathfrak Q$ and $\mathfrak R$ the definition above coincides with all the standard definitions that appear in the literature.
\end{remark}

For the bases $\mathfrak Q$ and $\mathfrak R$, the boundedness properties of the corresponding maximal operators on weighted Lebesgue spaces are well known. This is completely classical and due to Muckenhoupt for the basis $\mathfrak{Q}$; see \cite{Muck}. The theorem of Muckenhoupt extends without difficulty to the basis $\mathfrak R$ whenever $p>1$; see for example \cite{BaKu}. We summarize these results below.

\begin{theorem}\label{t.complete} The following statements are true.
\begin{itemize}
 \item[(i)] Let $\mathfrak B$ be either $\mathfrak Q$ or $\mathfrak R$ and $1<p<+\infty$. Then $M_{\mathfrak B}:L^p(w)\to L^p(w)$ if and only if $M_{\mathfrak B}:L^p(w)\to L^{p,\infty}(w)$, if and only if $w\in A_{p,\mathfrak B}$.
\item[(ii)] For $\mathfrak Q$: $M_{\mathfrak Q}:L^1(w)\to L^{1,\infty}(w)$ if and only if $w\in A_{1,\mathfrak Q}$.
\item[(iii)] For $\mathfrak R$: If $w\in A_{1,\mathfrak R}$ then
\begin{align}\label{e.endRw}
  w(\{x\in\R^n:M_{\mathfrak R}f(x)>\lambda \}) \lesssim_{n,w} \int_{\R^n} \Phi_n \Big(\frac{|f(x)|}{\lambda}\Big) w(x)dx,\quad \lambda>0.
\end{align}
\end{itemize}
\end{theorem}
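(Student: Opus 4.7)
The plan is to split along the three parts, reducing the rectangle cases to the one-dimensional Muckenhoupt theory by iteration over coordinates. The necessity direction in (i) is identical for both bases: assuming the weak-type bound, I would test $M_{\mathfrak B}$ on $f=w^{1-p'}\ind_B$ for an arbitrary $B\in\mathfrak B$, use the pointwise lower bound $M_{\mathfrak B}f\geq \bigl(\tfrac{1}{|B|}\int_B w^{1-p'}\bigr)\ind_B$, and let the level approach this average to extract $[w]_{A_{p,\mathfrak B}}<+\infty$. For sufficiency in the cube case, I would run the classical Calder\'on-Zygmund/Vitali decomposition of $\{M_{\mathfrak Q}f>\lambda\}$ and apply H\"older with the $A_p$ weight to obtain the weak-type $(p,p)$ bound; the strong-type version then follows by combining the self-improvement $A_p\Rightarrow A_{p-\epsilon}$ (via reverse H\"older) with Marcinkiewicz interpolation against the trivial $L^\infty$ bound. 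The analogous direction in (ii) is handled by the same Vitali decomposition, using the pointwise characterization $M_{\mathfrak Q}w\lesssim [w]_{A_1}w$ to estimate $w$ on each selected cube, while the converse in (ii) comes by testing on $\ind_E$ for $E\subset Q$.

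To lift (i) to the basis $\mathfrak R$, I would invoke the classical fact that $w\in A_{p,\mathfrak R}$ if and only if $w$ is an $A_p$ weight in each coordinate variable, with constant uniformly bounded by $[w]_{A_{p,\mathfrak R}}$. Combined with the pointwise iteration $M_{\mathfrak R}\leq M^{(1)}\circ M^{(2)}\circ\cdots\circ M^{(n)}$, where $M^{(i)}$ is the one-dimensional Hardy-Littlewood maximal operator acting in the $i$-th variable, applying the one-dimensional bound from (i) in each direction via Fubini propagates the weighted estimate across all $n$ coordinates, yielding the strong-type $(p,p)$ inequality. The necessity direction is again the universal testing argument, so the full equivalence in (i) follows.

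For (iii), I would once more use the slicewise uniform $A_1$ characterization and iterate a weighted Kolmogorov-type inequality of the form $\int_I M^{(i)}g\,w\lesssim \int_I |g|(1+\log^+|g|)\,w$ in each variable: each application of a one-dimensional maximal operator trades an $L(\log L)^{k}$-type control for an $L(\log L)^{k-1}$-type control in the $w$-measure. After $n-1$ such reductions, the final step is the one-dimensional weighted weak-$(1,1)$ bound of (ii), and tallying the logarithmic gains across the chain yields precisely the Orlicz exponent $\Phi_n(t)=t(1+(\log^+t)^{n-1})$. The main technical obstacle is exactly this bookkeeping: one has to verify that the weighted one-dimensional $L\log L$ inequality chains correctly across the $n$ coordinates to produce $\Phi_n$, and that the $A_1$ constants remain under control through the iteration. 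The remainder is essentially cookbook Muckenhoupt theory.
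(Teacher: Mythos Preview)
The paper does not actually prove this theorem: it is stated as a summary of classical results, with citations to Muckenhoupt \cite{Muck} for the cube basis and to Bagby--Kurtz \cite{BaKu} for the extension to $\mathfrak R$, and no argument is given in the text. Your sketch is a faithful outline of precisely those classical proofs --- the testing-function necessity, the Calder\'on--Zygmund/reverse-H\"older sufficiency for cubes, and the coordinate-wise iteration $M_{\mathfrak R}\le M^{(1)}\circ\cdots\circ M^{(n)}$ together with the uniform-in-slices $A_p$ characterization for rectangles --- so there is nothing to contrast: your approach \emph{is} the approach of the cited sources.

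One small caution on part~(iii): the strong-type estimate $\int M^{(i)}g\,w\lesssim \int |g|(1+\log^+|g|)\,w$ that you invoke is a \emph{local} inequality (it requires finite $w$-measure of the ambient interval, or an appropriate normalization), so the chaining has to be phrased more carefully --- either by working with the distributional form throughout, or by using the abstract fact that a composition of $n$ sublinear operators each of weighted weak type $(1,1)$ and bounded on $L^\infty$ satisfies the $\Phi_n$-endpoint. This is exactly the bookkeeping you flag as the main obstacle, and it is handled cleanly in \cite{BaKu}; just be aware that the naive strong-type iteration as written does not quite close without this adjustment.
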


Thus, the boundedness properties of $M_{\mathfrak B}$ on $L^p(w)$, $1<p<+\infty$, are completely characterized for the bases $\mathfrak {Q,R}$, and the same is true for the weighted endpoint estimate $M_\mathfrak Q:L^1(w)\to L^{1,\infty}(w)$. However, the endpoint estimate \eqref{e.endRw} for the strong maximal function is not so transparent. Indeed, the presence of the logarithmic terms on the right hand side of the \eqref{e.endRw} results in the condition $A_{1,\mathfrak R}$ being sufficient, but not necessary, for the validity of \eqref{e.endRw}. A necessary condition for \eqref{e.endRw}, which is weaker than $w\in A_{1,\mathfrak R}$, appears in \cite{BaKu} but the authors show that it is not sufficient. On the other hand, the weighted endpoint estimate \eqref{e.endRw} has been characterized in \cite{Gog} in terms of a certain covering property for rectangles. See also \cite{KK}*{Theorem 4.3.1} for a detailed proof of this fact. Similar characterizations of the boundedness properties of maximal operators on general $L^p(\mu)$-spaces in terms of covering properties are contained for example in \cite{J}*{Theorem 2.2} and \cite{GaRu}*{Lemma IV.6.11}, while the approach goes back to \cite{Cor} and \cite{CF}. There is however no characterization in the spirit of the Muckenhoupt $A_{p,\mathfrak R}$-classes, of the weights $w$ such that \eqref{e.endRw} holds.

\subsection{Maximal operators with respect to measures}\label{s.generalmeasures} Let $\mu$ be a non-negative measure on $\R^n$, finite on compact sets, and let $\mathfrak B$ be a basis. For $f\in L^1 _{\operatorname{loc}}(\mu)$ we write
\begin{align*}
 M  _{\mathfrak B,\mu}f(x)\coloneqq \sup_{ \substack{B\in\mathfrak B \\ B\ni x\\ \mu(B)>0}} \frac{1}{\mu(B)}\int_B |f(y)| d\mu(y),
\end{align*}
if $x\in\cup_{B\in\mathfrak B}B$ and $M _{\mathfrak B,\mu}f(x)\coloneqq 0$ if $x\notin \cup_{B\in\mathfrak B}B$. If $d\mu(x)=w(x)dx$ for some weight $w$ associated to the basis $\mathfrak B$ we just write $M_{\mathfrak B,w}$ for $M_{\mathfrak B,\mu}$ and this operator will be called the weighted maximal operator with respect to $w$. The boundedness properties of $M_{\mathfrak B,\mu}$ are much harder than the corresponding properties of the unweighted maximal operator, with definitive information only for special cases of bases $\mathfrak B$ and measures $\mu$. Again, we mainly restrict our attention to the case that $\mathfrak B$ is either $\mathfrak Q$ or $\mathfrak R$. As in the unweighted case, the one-parameter operator $M_{\mathfrak Q,\mu}$ is easier to analyze than the operator $M_{\mathfrak R,\mu}$. However, even in the one-parameter case, there is no complete characterization of the measures $\mu$ for which $M_{\mathfrak Q,\mu}$ is bounded on $L^p(\R^n,\mu)$. Below we give a brief overview of the known results for the bases $\mathfrak Q$ or $\mathfrak R$ and refer the interested reader to the monograph \cite{Jou} for further details and proofs.

\subsubsection{A special one-dimensional result}\label{s.one} In dimension $n=1$, let $\mu$ be any non-negative Borel measure. We have that $M_{\mathfrak Q,\mu}:L^1(\mu)\to L^{1,\infty}(\mu)$ and by interpolation $M_{\mathfrak Q,\mu}:L^p(\mu)\to L^p(\mu)$ for all $1<p\leq +\infty$. This result is very special to one dimension since the proof depends on a covering lemma for intervals of the real line. Observe that there is essentially no restriction on the measure $\mu$. See for example \cite{Sjo} for the details of this result.

\subsubsection{The centered, one-parameter maximal function with respect to a measure \texorpdfstring{$\mu$}{mu}.}\label{s.cent} A common variation of $M_{\mathfrak Q,\mu}$ is the \emph{centered} weighted Hardy-Littlewood maximal function, given as
\begin{align*}
 M_{\mathfrak Q,\mu} ^ {\operatorname{c}} f(x)\coloneqq \sup_{r>0} \frac{1}{\mu(Q(x,r))}\int_{Q(x,r)} |f(y)|d\mu(y),
\end{align*}
where $Q(x,r)$ denotes the cube with sides parallel to the coordinate axes and sidelength $r>0$, centered at $x\in\R^n$. Then for any non-negative Borel measure $\mu$ we have that $M_{\mathfrak Q,\mu} ^ {\operatorname{c}} :L^1(\R^n)\to L^{1,\infty}(\R^n)$ and thus, by interpolation, $M_{\mathfrak Q,\mu} ^ {\operatorname{c}} :L^p(\R^n)\to L^{p}(\R^n)$ for all $1<p\leq +\infty$.

The proof of this result depends on the Besicovitch covering lemma and it remains valid whenever the Besicovitch argument goes through. Thus, the condition that the maximal function defined above is centered is essential. For example, it was shown in \cite{Sjo} that if $\gamma$ is the Gaussian measure in $\R^2$ then the non-centered weighted maximal operator $M_{\mathfrak Q,\gamma}$ does not map $L^1$ to $L^{1,\infty}$.

The second essential hypothesis, hidden in the definition of $M_{\mathfrak Q,\mu} ^ {\operatorname{c}}$, is that it is a \emph{one-parameter} maximal operator, that is, we average with respect to a one-parameter family of cubes. Here one could replace cubes by Euclidean balls or more general ``balls'', given by translations and \emph{one-parameter} dilations of a  convex set in $\mathbb R^n$ symmetric about the origin.

On the other hand, emphasizing the need for the one-parameter hypothesis mentioned previously, the  boundedness fails for the weighted strong maximal function, even in its centered version. The reason is that the family $\mathfrak R$ is an $n$-parameter family of sets for which the Besicovitch covering is not valid. See for example \cite{F} for an example of a locally finite measure $\mu$ for which $M_{\mathfrak R,\mu} ^ {\operatorname{c}}$ is unbounded on $L^p(\mu)$ for all $p<\infty$.

\subsubsection{The non-centered, one-parameter maximal function with respect to a doubling measure}\label{s.ncent} Let $\mu$ be a non-negative Borel measure. The following definition is standard.
\begin{definition} The measure $\mu$ is called \emph{doubling} if there is a constant $\Delta_\mu>0$ such that, for every cube $Q=Q(x,r)\subseteq \R^n$ we have
\begin{align*} \mu(2Q)\leq \Delta_\mu \mu(Q),\end{align*}
where $2Q=Q(x,2r)$.
\end{definition}

It is an easy observation that for $\mu$ doubling, the non-centered weighted maximal operator $M_{\mathfrak Q,\mu}$ is pointwise equivalent to its centered version, that is, $M_{\mathfrak Q,\mu}   f(x)\simeq M_{\mathfrak Q,\mu} ^ {\operatorname{c}} f(x)$, where the implicit constants depend only on $\Delta_\mu$. It follows from the discussion in \S \ref{s.cent} that the maximal operator $M_{\mathfrak Q,\mu}$ with respect to a doubling measure $\mu$ maps $L^1(\mu)$ to $L^{1,\infty}(\mu)$ and $L^p(\mu)$ to $L^p(\mu)$ for all $1<p\leq \infty$.

Here, the doubling hypothesis cannot be removed. Indeed, the example from \cite{Sjo} mentioned above shows that there exists a non-doubling measure, in particular the Gaussian measure in $\R^2$, such that $M_{\mathfrak Q,\gamma}$ does not map $L^1$ to $L^{1,\infty}$. Most of the results in the literature that study $M_{\mathfrak B,\mu}$ for non-doubling measures concern the basis $\mathfrak b$ consisting of all \emph{Euclidean balls} in $\R^n$ and radial measures. For example it is shown in \cite{Vargas} that if $\mu$ is rotationally invariant and assigns positive measure to all open sets then $M_{\mathfrak b,\mu}:L^1(\mu)\to L^{1,\infty}(\mu)$ if and only if $\mu$ is \emph{doubling away from the origin}. In \cite{SjoSo} the authors provide a sufficient condition on a radial measure $\mu$ so that $M_{\mathfrak b,\mu}$ satisfies certain weak type inequalities close to $L^1(\mu)$ which in turn imply that $M_{\mathfrak b,\mu}:L^p(\mu)\to L^p(\mu)$.  An example of a radial measure $\mu$ such that $M_{\mathfrak b,\mu}$ is unbounded on all $L^p(\mu)$-spaces,  $p<+\infty$, can be found in \cite{Inf}.
Note that in the non-doubling case, the operators $M_{\mathfrak Q,\mu}$ and $M_{\mathfrak b,\mu}$ can behave quite differently, unlike the doubling case. For example, if $\mu$ is a product of non-negative one-dimensional Borel measures then obviously $M_{\mathfrak Q,\mu}\leq M_{\mathfrak R,\mu}$. By the one-dimensional result mentioned in \S \ref{s.one} and the methods from \cite{CaFa} we get that $M_{\mathfrak R,\mu}$, and a fortiori $M_{\mathfrak Q,\mu}$, is bounded on $L^p(\mu)$ and satisfies the endpoint estimate
\begin{align*}
 \mu(\{x\in\R^n:M_{\mathfrak Q,\mu}f(x)>\lambda\}) \leq \mu(\{x\in\R^n:M_{\mathfrak R,\mu}f(x)>\lambda\}) \lesssim_n \int_{\R^n} \Phi_n\Big(\frac{|f(x)|}{\lambda}\Big)d\mu(x).
\end{align*}
One such product measure is the Gaussian measure $\gamma$ on $\R^n$ for which we can thus conclude
\begin{align}\label{e.cubenondoubl}
   \gamma(\{x\in\R^n:M_{\mathfrak Q,\gamma}f(x)>\lambda \}) \lesssim_{n} \int_{\R^n} \frac{|f(x)|}{\lambda}\bigg(1+\Big(\log^+\frac{|f(x)|}{\lambda}\Big)^{n-1}\bigg)\gamma(x)dx.
\end{align}
On the other hand it was shown in \cite{SjoSo} that
\begin{align*}
   \gamma(\{x\in\R^n:M_{\mathfrak b,\gamma}f(x)>\lambda \}) \lesssim_{n} \int_{\R^n} \frac{|f(x)|}{\lambda}\bigg(1+\Big(\log^+\frac{|f(x)|}{\lambda}\Big)^\frac{n+1}{2}\bigg)\gamma(x)dx
\end{align*}
and that this is actually sharp. It is not currently known whether estimate \eqref{e.cubenondoubl} is sharp. The previous discussion shows however that, surprisingly, the operator $M_{\mathfrak R,\gamma}$ on $\R^2$ exhibits stronger endpoint behavior than $M_{\mathfrak b,\gamma}$.

\subsubsection{The weighted strong maximal function} The rectangle case is less understood. Even if the measure $\mu$ is doubling, it is not known in general whether $M_{\mathfrak R,\mu}$ maps $L^p(\R^n,\mu)$ to $L^p(\R^n,\mu)$. Notable exceptions are the one-dimensional case as well as the case where $\mu$ is a product measure, as we have already seen in \S \ref{s.ncent}. The case $d\mu(x)=w(x)dx$ has been studied more systematically, the main result being that of R. Fefferman from \cite{F}: If $w\in A_{\infty,\mathfrak R}$ then $M_{\mathfrak R,w}:L^p(w)\to L^p(w)$ for all $1<p\leq \infty$. The endpoint inequality
\begin{align*}
   w(\{x\in\R^n:M_{\mathfrak R,w}f(x)>\lambda \})&\lesssim_{n,w} \int_{\R^n} \Phi_n\Big(\frac{|f(x)|}{\lambda}\Big) w(x)dx
\end{align*}
is also true whenever $w\in A_{\infty,\mathfrak R}$. This was proved by Jawerth and Torchinsky in~\cite{JT} and independently by Long and Shen \cite{LOSH}. A weaker sufficient condition for the boundedness of $M_{\mathfrak R,w}$ appears in \cite{JT} but it is quite technical and we will not describe here; it shows however that $w\in A_{\infty,\mathfrak R}$ is \emph{not} a necessary condition for the boundedness of $M_{\mathfrak R,w}$ on $L^p(w)$, nor for the corresponding endpoint distributional estimate above.

\subsubsection{The weighted maximal function associated to a general basis} For general bases $\mathfrak B$ and associated weights $w$, very little is known concerning the boundedness of $M_{\mathfrak B}$ and $M_{\mathfrak B,w}$ on $L^p(w)$. However, the following abstract theorem from \cite{Per1} gives a necessary and sufficient condition for the boundedness of the $M_{\mathfrak B,w}$, in terms of the unweighted maximal function $M_{\mathfrak B}$ in the special case that $w\in A_{\infty,\mathfrak B}$.

\begin{theorem}[C. P\'erez]\label{t.perez} Let $\mathfrak B$ be a basis. The following are equivalent:
	\begin{itemize}
		\item[(i)] For every $1<p<+\infty$ and every $w\in A_{p,\mathfrak B}$ we have that
		\begin{align*}
			M_{\mathfrak B}:L^p(w)\to L^p(w).
		\end{align*}
		\item[(ii)] For every $1<p<+\infty$ and every $w\in A_{\infty,\mathfrak B}$ we have that
		\begin{align*}
			M_{\mathfrak B,w}:L^p(w)\to L^p(w).
		\end{align*}
	\end{itemize}
\end{theorem}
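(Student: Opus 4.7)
My overall plan for both directions is to establish a pointwise bound relating $M_{\mathfrak B} f$ (respectively $M_{\mathfrak B,w}f$) to a product involving the dual weight $\sigma$ and the appropriate maximal function, then to convert that bound into an $L^p(w)$-norm estimate via Hölder combined with the hypothesis at hand.

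For the direction $(ii)\Rightarrow(i)$, I would fix $w\in A_{p,\mathfrak B}$ and set $\sigma\coloneqq w^{1-p'}$, noting that $\sigma\in A_{p',\mathfrak B}\subset A_{\infty,\mathfrak B}$. For every $B\ni x$ the decomposition
\[
  \frac{1}{|B|}\int_B |f|=\frac{\sigma(B)}{|B|}\cdot\frac{1}{\sigma(B)}\int_B (|f|/\sigma)\,\sigma,
\]
combined with $\sigma(B)/|B|\leq M_{\mathfrak B}\sigma(x)$, produces the pointwise estimate $M_{\mathfrak B}f(x)\leq M_{\mathfrak B}\sigma(x)\cdot M_{\mathfrak B,\sigma}(f/\sigma)(x)$. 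The algebraic identity $\sigma^{1-p}=w$ makes $f\mapsto f/\sigma$ an isometry between $L^p(w)$ and $L^p(\sigma)$, and hypothesis $(ii)$ applied to $\sigma$ supplies the boundedness of $M_{\mathfrak B,\sigma}$ on $L^p(\sigma)$. The leftover factor $M_{\mathfrak B}\sigma/\sigma$ I would absorb via Hölder at a slightly larger exponent together with the reverse-Hölder regularity of $\sigma\in A_{\infty,\mathfrak B}$, which in this abstract setting must be extracted from $(ii)$ applied with a nearby exponent.

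For the converse $(i)\Rightarrow(ii)$, I would fix $w\in A_{\infty,\mathfrak B}$, choose $q\geq 1$ with $w\in A_{q,\mathfrak B}$, and again set $\sigma\coloneqq w^{1-q'}$. Jensen's inequality applied to the convex function $t\mapsto t^{1-q}$, using $\sigma^{1-q}=w$, yields the free bound $|B|/w(B)\leq(\sigma(B)/|B|)^{q-1}$ for every $B\in\mathfrak B$. Combining this with $\sigma(B)/|B|\leq M_{\mathfrak B}\sigma(x)$ and the trivial $\frac{1}{|B|}\int_B |f|w\leq M_{\mathfrak B}(fw)(x)$ for $x\in B$ gives
\[
  M_{\mathfrak B,w}f(x)\leq\bigl(M_{\mathfrak B}\sigma(x)\bigr)^{q-1}M_{\mathfrak B}(fw)(x).
\]
Raising to the $p$-th power, integrating against $w$, and applying Hölder to separate the two factors, the $M_{\mathfrak B}(fw)$ contribution would be controlled by hypothesis $(i)$ applied to a weight of the form $(M_{\mathfrak B}\sigma)^{\alpha}w$, which lies in an appropriate $A_r$-class for small enough $\alpha$ thanks to a Coifman--Rochberg-type principle.

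The main obstacle I anticipate is that the classical auxiliary tools of weighted theory --- reverse Hölder for $A_{\infty}$, the Coifman--Rochberg lemma, and Jones factorization --- are not available a priori for an abstract basis $\mathfrak B$, so each of them has to be derived in situ from the hypothesis that is being assumed. I expect the most delicate point to be the balancing of exponents in the Hölder step of the direction $(i)\Rightarrow(ii)$: the parameter $\alpha$ must be small enough that $(M_{\mathfrak B}\sigma)^{\alpha}\in A_{1,\mathfrak B}$, yet large enough that the remaining factor still lands inside the range where hypothesis $(i)$ is applicable. A likely additional ingredient is the self-improvement $A_{q,\mathfrak B}\subset A_{q-\varepsilon,\mathfrak B}$, which in the abstract setting would itself need to be justified from $(i)$.
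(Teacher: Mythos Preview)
The paper does not prove this theorem; it is cited from P\'erez~\cite{Per1} without argument, so there is no in-paper proof to compare against.

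On your sketch itself: the pointwise-domination strategy is the correct starting point, and you have honestly identified the real obstacles. Two concrete remarks. First, for $(ii)\Rightarrow(i)$ a cleaner bound than yours is $(M_{\mathfrak B}f)^p\le[w]_{A_{p,\mathfrak B}}\,M_{\mathfrak B,w}(|f|^p)$, obtained directly from H\"older and the $A_p$ condition; this eliminates the stray factor $M_{\mathfrak B}\sigma$ and hence the need for reverse H\"older at that step. But it still only yields $M_{\mathfrak B}:L^{pq}(w)\to L^{pq}(w)$ for $q>1$ after invoking (ii), so the endpoint $q=1$ (equivalently, the self-improvement $w\in A_{p-\varepsilon,\mathfrak B}$) remains the issue. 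Second, your plan for $(i)\Rightarrow(ii)$ via a Coifman--Rochberg-type principle is on the right track: under (i) the Rubio de Francia iteration is available, since (i) is exactly the boundedness input it requires, and from it one recovers factorization and the $A_{1,\mathfrak B}$-property of $(M_{\mathfrak B}g)^\delta$. The exponent balancing you anticipate is genuine but becomes routine once these tools are in hand. In short, your outline is correct in spirit but is a programme rather than a proof; the missing content is precisely the in-situ derivation of the $A_p$-structure theory (openness of the classes, factorization, Coifman--Rochberg) from the respective hypotheses, which is the substance of P\'erez's original argument.
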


The previous theorem as well as Theorem \ref{t.complete} motivate the following definition, which is also from \cite{Per1}.

\begin{definition} A basis $\mathfrak B$ is a \emph{Muckenhoupt basis} if for every $1<p<+\infty$ and every $w\in A_{p,\mathfrak B}$, we have that
	\begin{align*}
		M_{\mathfrak B}:L^p(w)\to L^p(w).
	\end{align*}	
\end{definition}

With this definition Theorem~\ref{t.perez} states that $\mathfrak B$ is a Muckenhoupt basis if and only if the weighted maximal function satisfies $M_{\mathfrak B,w}:L^p(w)\to L^p(w)$ for every $1<p<+\infty$ and every $w\in A_{\infty,\mathfrak B}$. On the other hand, Theorem \ref{t.complete} shows that both $\mathfrak Q$ and $\mathfrak R$ are Muckenhoupt bases. Another interesting example of Muckenhoupt basis is given by the C\'ordoba-Zygmund basis in $\R^3$, consisting of rectangles with sides parallel to the coordinate axes and sidelengths of the form $(s,t,st), \ s,t>0$. See \cite{F2} and \cite{PiFe} for this and related facts. An example of a basis that is \emph{not} a Muckenhoupt basis is the collection of all rectangles in $\R^n$.   To see this, observe that the Lebesgue measure satisfies the $A_p$ condition in Definition~\ref{d.Ap} with respect to this basis for all $1 < p <+ \infty$, although the corresponding universal maximal operator is unbounded on $L^p(\R^n)$ for $1 < p <+ \infty$.

\section{Tauberian conditions and Muckenhoupt weights} The purpose of this section is to give a new characterization of the class of Muckenhoupt weights $A_{\mathfrak B,\infty}$ in the case that $\mathfrak B$ is a homothecy invariant basis consisting of convex sets. This is the content of Theorem \ref{t.taubw} and Corollary \ref{c.taubw} below. The main interest is in the case of the basis $\mathfrak R$ consisting of all rectangles in $\R^n$ with sides parallel to the coordinate axes. We show that $A_\infty ^* = A_{\infty,\mathfrak R}$ coincides with the class of weights $w$ satisfying a weighted Tauberian condition which, until now, had been considered weaker than $A_\infty ^*$.

\subsection{Tauberian conditions for homothecy invariant bases} In this paragraph we are concerned with bases $\mathfrak B$ that are \emph{homothecy invariant}, namely they satisfy
\begin{enumerate}
		\item[(i)] For every $B\in\mathfrak B$ and every $y\in\mathbb R^n$ we have that $\tau_y B \in\mathfrak B$, where $\tau_y B=\{x+y:x\in B\}$.
		\item[(ii)] For every $B\in\mathfrak B$ and $s>0$ we have that $\dil_sB \in\mathfrak B$, where $\dil_s B=\{sx:x\in B\}$.
\end{enumerate}

\begin{remark} An example of a Muckenhoupt basis which is not homothecy invariant is provided by the basis $\mathfrak B_0=\{(0,b):b>0\}$  consisting of intervals on the real line. It is shown in \cite{DuoMO} that $\mathfrak B_0$ is a Muckenhoupt basis and clearly it is not homothecy invariant.
\end{remark}

We will say that the corresponding maximal operator $M_{\mathfrak B}$ satisfies a \emph{Tauberian condition} with respect to a fixed $\gamma\in(0,1)$ if there exists some constant $c_{\mathfrak B,\gamma}>0$ such that, for every measurable set $E\subset\R^n$, we have
\begin{align*}\label{e.taub}
\tag{$\operatorname{A}_{\mathfrak B,\gamma }$} \Abs{\{x\in\R^n: M_{\mathfrak B}(\ind_E)(x)>\gamma\}}\leq c_{\mathfrak B,\gamma}  |E|.
\end{align*}
It is essential to notice here that the previous estimate is supposed to hold only for a fixed $\gamma\in(0,1)$. However, in practice, many times one has a Tauberian condition of the form \eqref{e.taub} for every $\gamma\in(0,1)$ and typically $c_{\mathfrak B,\gamma}$ blows up to infinity as $\gamma\to 0^+$.

This condition has appeared in different contexts in several places but we are mainly interested in the investigations initiated in \cite{HS}. There are a couple of situations where such a condition can arise naturally. For example if the operator $M_{\mathfrak B}$ is known to be of weak type $(p,p)$ for some $1\leq p<+\infty$ then $M_{\mathfrak B}$ satisfies \eqref{e.taub} with respect to every $\gamma \in(0,1)$. More interestingly, the Tauberian condition appears naturally when one considers \emph{density bases}.

\begin{definition} Let $\mathfrak B$ be a homothecy invariant basis in $\R^n$. Then $\mathfrak B$ is called a \emph{density basis} if it differentiates $L^\infty(\R^n)$, namely if for every $f\in L^\infty(\R^n)$, for almost every $x\in\R^n$ and for every sequence $\{B_k\}_k \subset \mathfrak B$ such that $B_k\ni x$ and $\diam(B_k)\to 0$ we have
\begin{align*}
 \lim_{k\to+\infty} \frac{1}{|B_k|}\int_{B_k} f(y)dy =f(x).
\end{align*}
\end{definition}

Note that since translations and dilations of all sets in $\mathfrak B$ are still in $\mathfrak B$ there is always a sequence of sets $\{B_k\}_k$ as in the previous definition. The relevance of density bases to Tauberian conditions is revealed by the following theorem of Busemann and Feller \cite{BF}; see also \cite{Guzdif}*{Chapter III, Theorem 1.2}:

\begin{theorem}[Busemann, Feller]\label{t.bf} Let $\mathfrak B$ be a homothecy invariant basis and $M_{\mathfrak B}$ be the corresponding maximal operator. Then $\mathfrak B$ is a density basis if and only if, for every $\gamma\in (0,1)$ there is a constant $0<c(\gamma)<+\infty$ such that, for every measurable set $E\subset \R^n$ we have
 \begin{align*}
  \Abs{\{x\in\R^n:M_{\mathfrak B}(\ind_E)(x)>\gamma\}}\leq c(\gamma)|E|.
 \end{align*}
\end{theorem}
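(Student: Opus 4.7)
The plan is to prove the two implications separately. The harder direction goes from Tauberian at every level to density basis: it proceeds by upgrading the set-level hypothesis to a function-level weak-type inequality on $M_{\mathfrak B}$ and then following the classical recipe for almost-everywhere differentiation. The converse is an explicit construction exploiting homothecy invariance: a failure of the Tauberian bound at a single level $\gamma_0$ is turned into a concrete bounded function that is not differentiated.

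For the direction Tauberian $\Rightarrow$ density basis, the key step is a weak-type estimate: for every $f\in L^\infty(\R^n)$ of bounded support and every $\lambda>0$,
\begin{equation*}
\Abs{\{x\in\R^n:M_{\mathfrak B}f(x)>\lambda\}}\,\leq\, \frac{2}{\lambda}\,c\bigl(\tfrac{\lambda}{2\norm{f}.\infty.}\bigr)\,\norm{f}.1..
\end{equation*}
This follows from a standard truncation: writing $F\coloneqq\{|f|>\lambda/2\}$, one has $M_{\mathfrak B}f(x)\leq\lambda/2+\norm{f}.\infty.\,M_{\mathfrak B}\ind_F(x)$, so $\{M_{\mathfrak B}f>\lambda\}\subset\{M_{\mathfrak B}\ind_F>\lambda/(2\norm{f}.\infty.)\}$, and the right-hand side is controlled by the Tauberian condition at level $\lambda/(2\norm{f}.\infty.)$, using $|F|\leq 2\norm{f}.1./\lambda$. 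It is crucial here that $c(\gamma)$ be finite at \emph{every} $\gamma\in(0,1)$, not at a single level. With this weak-type bound in hand, I would approximate a given $f\in L^\infty$ by a continuous function $g$ satisfying $\norm{g}.\infty.\leq\norm{f}.\infty.$ and $\norm{f-g}.1.<\epsilon$, via convolution with a smooth bump. Continuous functions are trivially differentiated by any homothecy invariant basis since $\bigl|\frac{1}{|B|}\int_B g-g(x)\bigr|\to 0$ uniformly in the position of $B\ni x$ as $\diam B\to 0$. Splitting $f=g+(f-g)$, the set where the averages of $f$ fail to converge to $f$ within error $\alpha$ is contained in $\{M_{\mathfrak B}|f-g|>\alpha/2\}\cup\{|f-g|>\alpha/2\}$; both are controlled by a constant times $\epsilon$ via the weak-type bound and Chebyshev, so letting $\epsilon\to 0$ yields a.e.\ convergence.

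For the converse I would argue by contraposition: assuming Tauberian fails at some $\gamma_0\in(0,1)$, I construct a bounded function not differentiated by $\mathfrak B$. For each $N$, pick a measurable $E_N$ with $|\{M_{\mathfrak B}\ind_{E_N}>\gamma_0\}|/|E_N|$ arbitrarily large. Using homothecy invariance, I rescale via $E_N'\coloneqq\tau_{t_N}\dil_{s_N}E_N$, choosing $s_N$ small enough that $|E_N'|\leq 2^{-N}$, that the rescaled witnessing sets $B\in\mathfrak B$ (those with $|B\cap E_N'|/|B|>\gamma_0$) have diameter below $1/N$, and that the bad set $S_N'\coloneqq\{M_{\mathfrak B}\ind_{E_N'}>\gamma_0\}$ has measure at least $1$; the translations $t_N$ are chosen so that all $S_N'$ lie in a common bounded region $Q$. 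Setting $F\coloneqq\bigcup_N E_N'$, we have $|F|\leq\sum 2^{-N}=1$, so $A_N\coloneqq S_N'\setminus F$ satisfies $|A_N|\geq 1-|F|$ uniformly in $N$. Since $\sum_N|A_N|=\infty$ while all $A_N$ live in the bounded region $Q$, a Borel--Cantelli argument guarantees that $\limsup_N A_N$ has positive measure. For any $x$ in this limsup, $x\notin F$ and infinitely many $N$ admit a set $B\ni x$ of diameter at most $1/N$ with $|B\cap F|/|B|\geq|B\cap E_N'|/|B|>\gamma_0$, so the averages of $\ind_F$ over sets $B\ni x$ with $\diam B\to 0$ do not converge to $\ind_F(x)=0$, contradicting that $\mathfrak B$ differentiates $L^\infty$.

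The main technical obstacle is the geometric bookkeeping in the converse direction: simultaneously controlling the diameters of the rescaled sets $E_N'$ and of their witnessing balls, and arranging for the bad sets $S_N'$ to overlap sufficiently in a common bounded region while keeping $F$ of small total measure. Homothecy invariance is the decisive tool here, since dilating $E_N$ by $s$ dilates each witnessing $B$ by the same factor while preserving the Tauberian ratio $|\{M_{\mathfrak B}\ind_{E_N}>\gamma_0\}|/|E_N|$; so diameters of witnesses can be forced arbitrarily small without sacrificing the failure of the inequality. This ingredient would fail for bases that are only translation invariant, as one could not shrink witnessing balls at will.
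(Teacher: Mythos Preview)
The paper does not prove this theorem; it is quoted as a classical result and the reader is referred to \cite{BF} and \cite{Guzdif}*{Chapter III, Theorem 1.2}. So there is no in-paper argument to compare against directly.

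Your argument for ``Tauberian at every level $\Rightarrow$ density basis'' is correct and standard. In the converse direction your overall strategy (contraposition, rescaling via homothecy, assembling a bad indicator by a limsup argument) is the classical one, but the rescaling step has a genuine gap. You claim that a single choice of $s_N$ can make $|E_N'|\leq 2^{-N}$, force all witnessing sets for $E_N'$ to have diameter below $1/N$, and keep $|S_N'|\geq 1$. The first and third constraints are reconciled by taking $E_N$ with Tauberian ratio $\geq 2^N$, as you note. The second, however, cannot be met as written: a witness $B$ satisfies only $|B|<|E_N'|/\gamma_0$, so its \emph{volume} is bounded while its \emph{diameter} need not be, and for a general homothecy invariant basis of bounded open sets the family of witnesses may contain sets of arbitrarily large diameter regardless of $s_N$. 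One must instead work with the truncated level set $\{x:\sup_{B\ni x,\ \diam B<1/N}|B\cap E_N'|/|B|>\gamma_0\}$ and argue separately (using that $\{M_{\mathfrak B}^{D}\ind_E>\gamma_0\}$ increases to $\{M_{\mathfrak B}\ind_E>\gamma_0\}$ as $D\to\infty$, combined with homothecy invariance) that its ratio to $|E_N'|$ can still be taken arbitrarily large. Even after this fix, placing all the truncated level sets inside a common bounded $Q$ requires a further localization step (for instance, pigeonholing $E_N$ into a cube of fixed scale), since the quantity $\diam E_N/|E_N|^{1/n}$ is scale-invariant and is not controlled by the Tauberian ratio alone. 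These are precisely the technical refinements supplied in de~Guzm\'an's treatment; without them your reverse-Fatou (``Borel--Cantelli'') step does not go through. As a minor point, with your constants $|F|\leq\sum 2^{-N}=1$ and $|S_N'|\geq 1$ you only get $|A_N|\geq 0$; you need strict inequality, e.g.\ $|S_N'|\geq 2$.
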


Thus for homothecy invariant density bases $\mathfrak B$ consisting of convex sets, the corresponding maximal operator satisfies Tauberian conditions of the type \eqref{e.taub} with respect to \emph{every} $\gamma\in(0,1)$. The same is true if the operator $M_{\mathfrak B}$ is known to be of weak type $(p,p)$ for some $p\geq 1$. Even though this is apparently stronger than a single Tauberian condition with respect to a fixed $\gamma\in(0,1)$, the following striking theorem was proved in \cite{HS}:

\begin{theorem}[Hagelstein, Stokolos]\label{t.stokhag} Let $\mathfrak B$ be a homothecy invariant basis consisting of convex sets in $\R^n$. Then the following are equivalent:
\begin{enumerate}
 \item[(i)] The operator $M_{\mathfrak B}$ satisfies a Tauberian condition~\eqref{e.taub} with respect to some \emph{fixed} $\gamma\in(0,1)$.
 \item[(ii)] There exists some $1<p_o=p_o(\mathfrak B,\gamma,n)<+\infty$ such that $M_{\mathfrak B}:L^p(\R^n)\to L^p(\R^n)$ for all $p>p_o$.
\end{enumerate}
\end{theorem}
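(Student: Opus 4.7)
The direction (ii)$\Rightarrow$(i) is essentially trivial: a weak-type $(p,p)$ bound applied to $\ind_E$ together with Chebyshev's inequality yields
\begin{align*}
\Abs{\{M_{\mathfrak B}(\ind_E) > \gamma\}} \leq \gamma^{-p}\,\|M_{\mathfrak B}\|_{L^p\to L^{p,\infty}}^p\, |E|,
\end{align*}
so any fixed $\gamma\in(0,1)$ works, with $c_{\mathfrak B,\gamma}=(\|M_{\mathfrak B}\|_{L^p\to L^{p,\infty}}/\gamma)^p$. The substance of the theorem lies in the converse implication (i)$\Rightarrow$(ii), which I plan to carry out in three stages.

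\emph{Step 1 (John-ellipsoid reduction).} Using the John ellipsoid from \S~\ref{s.john}, each convex $B\in\mathfrak B$ is sandwiched between $E_B$ and $nE_B$, so replacing $\mathfrak B$ by the homothecy-invariant basis generated by the $E_B$ gives a pointwise comparison of the two maximal operators with constants depending only on $n$, and the Tauberian condition transfers at a slightly adjusted level $\gamma'=\gamma'(\gamma,n)$ with comparable constant. Applying an affine transformation to each ellipsoid, I may then argue as if $\mathfrak B$ were a homothecy-invariant family of \emph{parallelepipeds}, where Cartesian bisection is available.

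\emph{Step 2 (Iteration to all small levels).} This is the technical heart. The goal is to produce constants $C>0$ and $\rho>1$ depending only on $\gamma$, $c_{\mathfrak B,\gamma}$, and $n$ such that for every measurable $E$ and every integer $k\geq 1$,
\begin{align*}
\Abs{\{M_{\mathfrak B}(\ind_E)>\gamma^k\}}\leq C\rho^k\,|E|.
\end{align*}
The strategy is to translate the density drop from $\gamma$ to $\gamma^k$ into a controlled inflation of the witnessing basis element: whenever $|B\cap E|/|B|>\gamma^{k+1}$, convexity, homothecy invariance, and a pigeonhole/bisection argument on a parallelepiped approximation of $B$ yield a homothetic child $B'\in\mathfrak B$ on which the density of an auxiliary level set of $M_{\mathfrak B}\ind_E$ at height $\gamma^k$ exceeds $\gamma$. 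Iterating $k$ times and applying the assumed Tauberian inequality at each stage produces the advertised geometric-decay estimate.

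\emph{Step 3 (Layer cake and interpolation).} With the geometric level-set bound in hand, the layer cake formula gives, for every $p$ such that $\gamma^p\rho<1$,
\begin{align*}
\int_{\R^n}(M_{\mathfrak B}\ind_E)^p\,dx=p\int_0^1\lambda^{p-1}\Abs{\{M_{\mathfrak B}\ind_E>\lambda\}}\,d\lambda\lesssim |E|\sum_{k=0}^{\infty}(\gamma^p\rho)^k\lesssim |E|,
\end{align*}
so $\|M_{\mathfrak B}\ind_E\|_p\lesssim |E|^{1/p}$ for every $p>p_o\coloneqq \log\rho/\log(1/\gamma)$. This restricted strong-type bound at two distinct exponents in $(p_o,\infty)$ is upgraded by Marcinkiewicz interpolation (equivalently, by Calder\'on's Lorentz-space interpolation) to a strong-type bound on all of $L^p(\R^n)$ for every $p>p_o$. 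The main obstacle is Step~2: without convexity one genuinely needs Tauberian conditions at \emph{every} level, by the Busemann--Feller theorem stated above, so any argument that propagates a single-level condition to all levels must exploit convexity in a quantitative way. The delicate point is producing the homothetic child $B'$ while simultaneously gaining density and retaining membership in $\mathfrak B$; it is here that the John-ellipsoid reduction of Step~1 and the homothecy invariance of $\mathfrak B$ combine to do the work.
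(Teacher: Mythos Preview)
Your overall architecture---John reduction to parallelepipeds, bootstrap the single-level Tauberian condition to all small levels, then interpolate---matches the paper's strategy. However, Steps~1 and~2 as written contain genuine gaps that the paper has to work hard to fill.

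\textbf{Step 1.} The pointwise comparison $M_{\mathfrak B}\simeq_n M_{\mathfrak G_{\mathfrak B}}$ does \emph{not} transfer the Tauberian condition in the direction you need. From $\{M_{\mathfrak G_{\mathfrak B}}\ind_E>\alpha\}\subset\{M_{\mathfrak B}\ind_E>\alpha/c_n\}$ you would require the hypothesis at the \emph{lower} level $\alpha/c_n$, which you do not have (and $c_n\gamma$ may exceed $1$). The paper's Lemma~\ref{l.taubassoc} establishes this transfer, but only to levels $\alpha>\gamma$, and it requires the non-trivial packing construction of Lemmas~\ref{l.weighted} and~\ref{l.homcopies}: one tiles the associated rectangle $R_B$ by many disjoint homothetic copies of $B$ whose total measure is close to $|R_B|$, argues by pigeonhole that one copy $\tilde B$ has $E$-density $\geq\beta$, and then expands $\tilde B$ through a chain of concentric dilates (each with measure ratio $1/\beta$) to swallow $R_B$, each step costing one application of the Tauberian hypothesis. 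This is where convexity is really used.

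\textbf{Step 2.} Finding a \emph{child} $B'\subset B$ with $|B'\cap F_k|/|B'|>\gamma$ (where $F_k=\{M_{\mathfrak B}\ind_E>\gamma^k\}$) places $B'$, not $B$ or the original point $x\in B$, in the set $\{M_{\mathfrak B}\ind_{F_k}>\gamma\}$; so the inclusion $F_{k+1}\subset\{M_{\mathfrak B}\ind_{F_k}>\gamma\}$ does not follow. The natural Calder\'on--Zygmund stopping argument (stop when density first exceeds $\gamma^k$) gives only $|B\cap F_k|/|B|>\gamma/2^n$, which is again the wrong level. The paper's Lemma~\ref{l.interm} overcomes this loss: after selecting the CZ rectangles $\{S_j\}$, it passes to their dyadic \emph{parents} $S_j^{(1)}$---which were not selected and hence have $E$-density $\leq\beta$---to gain a clean factor $1/\beta$ in density, and uses a dilation chain $S_{j,0}\subset S_{j,1}\subset\cdots$ with $\mu(S_{j,\ell+1})/\mu(S_{j,\ell})=1/\beta$ together with doubling to ensure the parents lie inside an iterated halo $\mathcal H_\beta^{N+2}(E)$. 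Iterating (Lemma~\ref{l.main}) then yields $R\subset\mathcal H_\beta^{k_{\alpha,\beta}}(E)$ with $k_{\alpha,\beta}\lesssim\log(1/\alpha)$, which is the geometric control your Step~3 needs.
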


In virtue of the previous theorem, the Tauberian condition \eqref{e.taub} for a single $\gamma\in(0,1)$ is equivalent to Tauberian conditions for \emph{every} $\gamma\in(0,1)$ whenever $\mathfrak B$ is a homothecy invariant basis consisting of convex sets in $\R^n$.

\subsection{Weighted Tauberian conditions} We now come to the main subject of the current paper which investigates variations of the Tauberian condition \eqref{e.taub} under the presence of a weight $w$ or, somewhat more generally, a measure $\mu$. For this section we still consider the \emph{unweighted} maximal operator $M_{\mathfrak B}$ acting on the weighted space $L^p(w)$. We will be mostly interested in the case that $\mathfrak B$ is either $\mathfrak Q$ or $\mathfrak R$, but we will see that our results remain valid for general homothecy invariant bases consisting of convex sets. We thus fix a  basis $\mathfrak B$ and a weight $w$ associated to $\mathfrak B$.

\begin{definition}
We will say that the maximal operator $M_{\mathfrak B}$ satisfies a weighted Tauberian condition with respect to some $\gamma\in(0,1)$ and a weight $w$ if there exists a constant $c_{\mathfrak B, \gamma,w} >0$ such that, for all measurable sets $E\subset \R^n$ we have
\begin{align*}\label{e.taubw}
\tag{$\operatorname{A}_{\mathfrak B,\gamma,w}$}	 w(\{x\in\R^n: M_{\mathfrak B}(\ind_E)(x)>\gamma\})\leq c_{\mathfrak B, \gamma,w} \, w(E).
\end{align*}
\end{definition}

The weighted Tauberian condition has been considered many times in the literature, especially in the context of weighted inequalities for the strong maximal function. Indeed, it appears for example in \cite{GLPT}, \cite{J}, \cite{JT}, \cite{LL}, \cite{Per1} and \cite{Per}. Condition \eqref{e.taubw} is typically presented in the literature as a presumably weaker substitute for the hypothesis $w\in A_{\infty, \mathfrak B}$ and is usually referred to as condition $(\operatorname{A})$. For example, in \cite{JT} condition \eqref{e.taubw} is used as a hypothesis in order to prove $L^p(w)$-bounds for the weighted strong maximal function $M_{\mathfrak R,w}$. Likewise, in \cite{Per1} it is shown that if $\mathfrak B$ is a Muckenhoupt basis and $M_{\mathfrak B}$ satisfies \eqref{e.taubw} for a fixed $\gamma\in(0,1)$ then $M_{\mathfrak B}$ satisfies a Fefferman-Stein inequality, namely
\begin{align*}
 \int_{\R^n} M_{\mathfrak B}f(x) ^p w(x)dx\lesssim_{n,p,w} \int_{\R^n} |f(x)|^p M_{\mathfrak B}w(x)dx,\quad 1<p<+\infty.
\end{align*}
Finally, in  \cite{GLPT} and  \cite{LL}, the condition is used in order to deal with covering properties of rectangles which are relevant in the study of two weight problems for the strong maximal function. The following theorem shows however that condition \eqref{e.taubw} is just an equivalent characterization of $A_{\infty,\mathfrak B}$ for quite a large class of bases $\mathfrak B$. This is the content of our first main result:

\begin{theorem}\label{t.taubw} Let $\mathfrak B$ be a homothecy invariant basis consisting of convex sets. Let $w$ be a non-negative, locally integrable function on $\R^n$. Then the following are equivalent:
\begin{itemize}
 \item [(i)] Condition \eqref{e.taubw} is satisfied for the weight $w$ and the basis $\mathfrak B$, for a fixed level $\gamma\in(0,1)$.
 \item [(ii)] There exists  $1<p_o=p_o(c_{\mathfrak B, \gamma,w},\gamma,n)<+\infty$ such that $M_{\mathfrak B}:L^p(w)\to L^p(w)$ for all $p>p_o$.
\end{itemize}
\end{theorem}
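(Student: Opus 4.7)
The direction $(ii)\Rightarrow(i)$ is immediate from Chebyshev's inequality: if $M_{\mathfrak B}\colon L^p(w)\to L^p(w)$ has norm $K$, then for every measurable $E\subseteq \R^n$,
\begin{align*}
w(\{x\in\R^n: M_{\mathfrak B}\ind_E(x) > \gamma\}) \leq \gamma^{-p}\int_{\R^n}(M_{\mathfrak B}\ind_E)^p\,w\,dx \leq \gamma^{-p}K^p\,w(E).
\end{align*}

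The substance of the theorem is $(i)\Rightarrow(ii)$, and the plan is to port the Hagelstein--Stokolos proof of Theorem~\ref{t.stokhag} to the weighted setting. Crucially, their argument is almost entirely geometric: convexity of the members of $\mathfrak B$ (controlled quantitatively via the John ellipsoid, cf.\ \S\ref{s.john}) is used to compare densities of a measurable set $E$ inside a convex $B\in\mathfrak B$ across scales, and the only measure-theoretic input is a single application of the Tauberian hypothesis. Keeping the geometric steps intact and replacing the invocation of the unweighted Tauberian condition by the weighted version \eqref{e.taubw}, the same scheme should produce, by iteration, a geometric decay estimate of the form
\begin{align*}
w(\{x\in\R^n: M_{\mathfrak B}\ind_E(x) > \gamma\beta^{k}\}) \leq C^{\,k+1}\,w(E), \qquad k\geq 0,
\end{align*}
where $\beta\in(0,1)$ and $C\geq 1$ depend only on $c_{\mathfrak B,\gamma,w}$, $\gamma$, and the John ratio of convex sets in $\R^n$.

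A layer-cake calculation then converts this decay into the restricted strong type bound $\|M_{\mathfrak B}\ind_E\|_{L^p(w)}^{p}\lesssim w(E)$ for every $p > p_o\coloneqq \log_{1/\beta}C$, since the dyadic tail $\sum_{k\geq 0}(\gamma\beta^{k})^{p}\,C^{\,k}$ converges precisely in that range. Because $M_{\mathfrak B}$ is a positive sublinear operator, the restricted bound upgrades to the full strong type estimate $M_{\mathfrak B}\colon L^p(w)\to L^p(w)$, either by Marcinkiewicz interpolation between the resulting restricted weak type bound at $p_o$ and the trivial $L^\infty(w)\to L^\infty(w)$ bound, or equivalently by decomposing an arbitrary non-negative $f\in L^p(w)$ into indicators of its superlevel sets.

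The main obstacle is the iterative geometric step suggested above, namely quantifying the John-ellipsoid subdivision scheme so that the density of $E$ inside a convex $B\in\mathfrak B$ can be boosted from a smaller threshold to a larger one at the cost of passing to a convex subset whose relative volume is controlled from below. This is the content of the geometric estimates underlying~\cite{HS}, and it is precisely the place where convexity is indispensable: without it, the Busemann--Feller theorem (Theorem~\ref{t.bf}) shows that Tauberian hypotheses at every level $\gamma\in(0,1)$ would be needed in place of the single fixed-level hypothesis assumed here.
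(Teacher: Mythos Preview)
Your proposal is correct and matches the paper's own proof essentially verbatim: the paper likewise observes that $(ii)\Rightarrow(i)$ is trivial, and for $(i)\Rightarrow(ii)$ notes that the Hagelstein--Stokolos argument from \cite{HS} carries over unchanged with $w$ in place of Lebesgue measure (since the geometric density-boosting steps involve only Lebesgue averages inside convex sets, and the outer measure enters solely through the Tauberian hypothesis), yielding restricted weak type $(q,q)$ for some $q>1$ and then strong type $(p,p)$ for $p>q$ by Marcinkiewicz interpolation. The paper also remarks that the result is subsumed by the more general Theorem~\ref{t.convdoubling} (take $\mu$ to be Lebesgue measure, which is doubling with respect to any basis of convex sets, and $d\nu=w\,dx$), whose proof makes the ``iterative geometric step'' you flag fully explicit via the halo sets $\mathcal H_\beta^k(E)$ and Lemmas~\ref{l.main} and~\ref{l.taubassoc}.
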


\begin{proof} It is trivial that (ii) implies (i). The converse implication for $w\equiv 1$ is essentially \cite{HS}*{Theorem 1}. Now a careful inspection of the proofs of the relevant results in \cite{HS} reveals that the arguments therein actually show that if $M_\mathfrak B$ and $w$ satisfy \eqref{e.taubw} then $M_{\mathfrak B}$ is of restricted type $(q,q)$ for some $q>1$, with respect to the weight $w$. Marcinkiewicz interpolation now gives (ii) for any $p>q$. Alternatively, the proof follows from the more general result in the current paper, namely Theorem~\ref{t.convdoubling}.
\end{proof}

\begin{remark}
 In fact the same proof goes through to show that if $\mu$ is a non-negative Borel measure which is finite on compact sets and $\mu(\{M_{\mathfrak B}(1_E)>\gamma\})\leq c_{\mathfrak B, \gamma,\mu} \mu(E)$ for every measurable set $E$ then $M_{\mathfrak B}:L^p(\mu)\to L^p(\mu)$ for some $p>1$. We will not insist on the this generalization here as Theorem~\ref{t.convdoubling} below provides a much more general statement.
\end{remark}
If the basis $\mathfrak B$ is additionally a Muckenhoupt basis we immediately get the following corollary:

\begin{corollary}\label{c.taubw} Let $\mathfrak B$ be a homothecy invariant basis consisting of convex sets which is also a Muckenhoupt basis. Then $w\in A_{\infty,\mathfrak B}$ if and only if $w$ satisfies \eqref{e.taubw} for \emph{some fixed level} $\gamma\in(0,1)$.
\end{corollary}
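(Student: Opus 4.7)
The plan is to derive Corollary~\ref{c.taubw} from Theorem~\ref{t.taubw} by using the Muckenhoupt basis hypothesis to bridge between $A_{\infty, \mathfrak B}$ and the $L^p(w)$-boundedness of $M_{\mathfrak B}$. Together these will give a three-way equivalence among $w \in A_{\infty, \mathfrak B}$, the $L^p(w)$-boundedness of $M_{\mathfrak B}$ for some $p > 1$, and the Tauberian condition \eqref{e.taubw} at a fixed level $\gamma$. Since Theorem~\ref{t.taubw} already links the second and third of these, I only need to tie the first one in on both ends.

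For the direction $w \in A_{\infty, \mathfrak B} \Rightarrow \eqref{e.taubw}$, I would unfold the definition $A_{\infty, \mathfrak B} = \bigcup_{p > 1} A_{p, \mathfrak B}$ to produce some $p > 1$ with $w \in A_{p, \mathfrak B}$. The Muckenhoupt basis hypothesis delivers $M_{\mathfrak B} : L^p(w) \to L^p(w)$, and Chebyshev's inequality applied to $f = \ind_E$ immediately gives \eqref{e.taubw} with constant $c_{\mathfrak B, \gamma, w} \lesssim \gamma^{-p}$.

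For the converse, Theorem~\ref{t.taubw} upgrades \eqref{e.taubw} at a single level $\gamma$ to the bound $M_{\mathfrak B} : L^p(w) \to L^p(w)$ for all sufficiently large $p$. From here I would invoke the classical necessity argument for Muckenhoupt weights, which goes through for any basis: fixing $B \in \mathfrak B$, setting $\sigma \coloneqq w^{1-p'}$, and testing the weak-type $(p,p)$ inequality against $f = \sigma \ind_B$ (truncated if needed to ensure local integrability of $\sigma$), one has the pointwise bound $M_{\mathfrak B} f(x) \geq \sigma(B)/|B|$ for $x \in B$, so the weak-type estimate rearranges to
\begin{align*}
\bigg(\frac{1}{|B|}\int_B w\bigg)\bigg(\frac{1}{|B|}\int_B w^{1-p'}\bigg)^{p-1} \lesssim 1,
\end{align*}
uniformly in $B$. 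This places $w$ in $A_{p, \mathfrak B}$, hence in $A_{\infty, \mathfrak B}$.

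The substantive content is carried entirely by Theorem~\ref{t.taubw}; the obstacle, such as it is, lies in checking that the testing argument above is genuinely insensitive to the choice of basis — which it is, since it uses only that each $B \in \mathfrak B$ is an admissible averaging set in the definition of $M_{\mathfrak B}$, together with the fact that $w$ is a weight so that the truncation $\min(\sigma, N) \ind_B$ belongs to $L^p(w)$ for every $N$. No geometric or combinatorial property of $\mathfrak B$ beyond what is already encoded in the Muckenhoupt basis hypothesis is required.
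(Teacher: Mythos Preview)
Your proposal is correct and is precisely the argument the paper has in mind; the paper states the corollary as an immediate consequence of Theorem~\ref{t.taubw} together with the Muckenhoupt basis hypothesis, and your two directions---using the Muckenhoupt basis property to pass from $w\in A_{p,\mathfrak B}$ to $L^p(w)$-boundedness and hence to \eqref{e.taubw}, and conversely using Theorem~\ref{t.taubw} followed by the standard testing argument with $f=\min(w^{1-p'},N)\ind_B$ to recover the $A_{p,\mathfrak B}$ condition---are exactly what ``immediately'' is meant to encode. Your observation that the necessity argument uses nothing about $\mathfrak B$ beyond the fact that each $B\in\mathfrak B$ is an admissible averaging set is the right justification for why the classical proof transfers verbatim.
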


Specializing to the case $\mathfrak B=\mathfrak Q$ or $\mathfrak B=\mathfrak R$, Corollary~\ref{c.taubw} provides a new characterization of the usual Muckenhoupt class $A_\infty$ and the strong Muckenhoupt class $A_\infty ^*$, respectively. However, condition \eqref{e.taubw} is a lot harder to check in practice than all the other equivalent formulations of $A_\infty$ and $A_\infty ^*$ that appear in the literature. Our main point however is that the assumption \eqref{e.taubw} for $\mathfrak B=\mathfrak R$ is exactly the same as $w\in A_\infty^*$ and not weaker, as it was so far believed.

\section{Doubling measures with respect to general bases} Consider the maximal function $M_{\mathfrak B,\mu}$ defined with respect to a non-negative measure $\mu$ which is finite on compact sets and a homothecy invariant basis $\mathfrak B$ in $\R^n$, consisting of open and bounded convex sets with non-empty interior. Our main objective is to find a characterization of the measures $\mu$ such that $M_{\mathfrak B,\mu}:L^p(\nu)\to L^p(\nu)$ for some $p>1$ in terms of a mixed $\mu,\nu$-Tauberian condition. To make this precise we give the following definition:

\begin{definition}
We will say that the maximal operator $M_{\mathfrak B,\mu}$ satisfies the Tauberian condition \eqref{e.taubconv} with respect to some \emph{fixed} $\gamma \in(0,1)$ if there is a constant $c_{\mathfrak B,\gamma,\nu} ^\mu >0$ such that for all measurable sets $E\subset\R^n$ we have
\begin{align*}\label{e.taubconv}
 \tag{$\operatorname{A}_{\mathfrak B,\gamma,\nu} ^\mu$} \nu(\{x\in\R^n:M_{\mathfrak B,\mu}(\ind_E)(x)>\gamma\})\leq c_{\mathfrak B,\gamma,\nu} ^\mu \nu(E).
\end{align*}
\end{definition}

If $M_{\mathfrak B,\mu}:L^p(\nu)\to L^p(\nu)$ for some $p>1$ then \eqref{e.taubconv} is satisfied for all $\gamma\in(0,1)$. We want to investigate if the converse is true, namely, if \eqref{e.taubconv} for some fixed $\gamma\in(0,1)$ implies the boundedness of $M_{\mathfrak B,\mu}$ on $L^p(\nu)$ for sufficiently large $p>1$. We will not pursue this in full generality but rather confine ourselves to the case that the measure $\mu$ is doubling with respect to the underlying basis.

\subsection{Doubling measures with respect to a differentiation basis}\label{s.gendoubling}
In this section we discuss the properties of measures which are doubling with respect to a basis of convex sets. By a basis of convex sets we will always mean a homothecy invariant basis $\mathfrak B$ consisting of non-empty, bounded, open convex sets with non-empty interior.

We begin by the definition of doubling in this general context. Remember that for $\sigma\in\R^n$, $E\subset \R^n$ and $c>0$ we write $\tau_\sigma E= \{\sigma+x: x\in E\}$ and $\dil_c E=\{cx: x\in E\}$.

\begin{definition} Let $\mu$ be a non-negative measure which is finite on compact sets. We will say that $\mu$ is doubling with respect to $\mathfrak B$ if there is a constant $ \Delta_{\mu,\mathfrak B}>1$ such that, for every $B\in \mathfrak B$ and every $\sigma \in \R^n$ such that $B\subset \tau_\sigma \dil_2 B $ we have
\begin{align*}
 \mu(\tau_\sigma \dil_2 B)\leq \Delta_{\mu,\mathfrak B} \mu(B).
\end{align*}
We always assume $\Delta_{\mu,\mathfrak B}$ to be the smallest possible constant so that the previous inequality holds uniformly for all $B\in\mathfrak B$. When the underlying basis $\mathfrak B$ is clear from the context we will write $\Delta_{\mu}$ for $\Delta_{\mu,\mathfrak B}$.
\end{definition}

\begin{remark}
The previous definition of a doubling measure reduces to the usual doubling condition (up to changes in the doubling constant) if $\mathfrak B=\mathfrak Q$ or $\mathfrak B=\mathfrak b$. However, the doubling condition with respect to $\mathfrak R$, say, is quite different than the doubling condition with respect to cubes. In fact, if one wants to study the behavior of the operator $M_{\mathfrak R,\mu}$ with respect to a measure $\mu$ then the ``natural'' condition is that $\mu$ is doubling with respect to $\mathfrak R$. For example in \cite{PWX}, measures that are doubling with respect to $\mathfrak R$ are called \emph{product-doubling} and we will adopt the same terminology here. The same notion of product-doubling is discussed for example in \cite{JT}. Naturally, weights $w\in A_\infty ^*$ give product-doubling measures $w(x)dx$.

Observe also that for a general basis of convex sets $\mathfrak B$ there is in general no natural homothecy center as the convex sets in $\mathfrak B$ might not be symmetric with respect to some point. In order to avoid confusion in all these subtle issues we will always specify the basis according to which a measure is assumed to be doubling.
\end{remark}

\subsection{The John ellipsoid}\label{s.john} One of the technical annoyances when dealing with general convex sets is the lack of a natural homothecy center as the convex sets we will consider will not in general be symmetric with respect to some point. In order to deal with this lack of symmetry and resulting technical issues, the classical lemma of F. John, \cite{John}, will be very useful. See also \cite{Ball} for a very nice exposition of this and related topics.

\begin{lemma}[F. John] Let $B$ be a bounded convex set in $\R^n$. Then $B$ contains a unique ellipsoid $\mathcal E_B$, of maximal volume. We will call $\mathcal E_B$ the \emph{John ellipsoid} of $B$. The John ellipsoid of $B$ is such that
\begin{align*}
 \mathcal E_B \subset B \subset n \mathcal E_B.
\end{align*}
Here $c\mathcal E_B$ denotes the dilation of the ellipsoid $\mathcal E_B$ by a factor $c>0$ with respect to its center.
\end{lemma}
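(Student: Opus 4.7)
The plan is to establish existence, uniqueness, and the inclusion $B\subset n\mathcal{E}_B$ in turn, following the classical route.

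\emph{Existence.} I would parametrize ellipsoids as pairs $(c,A)$ with $c\in\R^n$ and $A$ symmetric positive definite via $\mathcal{E}_{c,A}=\{x\in\R^n:(x-c)^TA^{-1}(x-c)\le 1\}$, so $\operatorname{vol}(\mathcal{E}_{c,A})=\omega_n\sqrt{\det A}$. Since $B$ is bounded with non-empty interior, the set of parameters with $\mathcal{E}_{c,A}\subset\overline{B}$ is a non-empty, closed and bounded subset of $\R^n\times\operatorname{Sym}_n^+$. Continuity of $\sqrt{\det A}$ yields a maximizer $\mathcal{E}_B$.

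\emph{Uniqueness.} If $\mathcal{E}_1,\mathcal{E}_2\subset B$ both attain the maximal volume $V$, the Minkowski average $\mathcal{E}\coloneqq\tfrac12\mathcal{E}_1+\tfrac12\mathcal{E}_2$ is again an ellipsoid and, by convexity of $B$, lies in $B$. Brunn--Minkowski gives $\operatorname{vol}(\mathcal{E})^{1/n}\ge V^{1/n}$, so maximality forces equality; the equality case then requires $\mathcal{E}_1$ and $\mathcal{E}_2$ to be homothetic, and equal volumes turn homothety into a translation $\mathcal{E}_2=\mathcal{E}_1+v$. If $v\ne 0$, the ``stadium'' $\mathcal{E}_1+[0,v]\subset B$ admits an inscribed ellipsoid with the same transverse axes as $\mathcal{E}_1$ but strictly longer in the direction of $v$, contradicting maximality; hence $v=0$.

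\emph{Sandwich inclusion.} After an affine change of variables I assume $\mathcal{E}_B=B^n$, the unit Euclidean ball centered at $0$. Arguing by contradiction, suppose some $x_0\in B$ satisfies $|x_0|>n$; after rotation, take $x_0=(t,0,\dots,0)$ with $t>n$. Then $K\coloneqq\operatorname{conv}(B^n\cup\{x_0\})\subset B$, and I seek an ellipsoid of revolution
\[
\mathcal{E}_{a,b,c}=\Big\{x\in\R^n: \Big(\tfrac{x_1-c}{a}\Big)^2+\tfrac{x_2^2+\cdots+x_n^2}{b^2}\le 1\Big\}
\]
inscribed in $K$ by imposing tangency to $\partial B^n$ at $(-1,0,\dots,0)$ (forcing $c=a-1$) and tangency to the tangent cone from $x_0$, whose generators meet $\partial B^n$ along the hyperplane $\{x_1=1/t\}$. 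A short computation using the cone equation $\sqrt{x_2^2+\cdots+x_n^2}=(t-x_1)/\sqrt{t^2-1}$ yields $b^2=(t+1-2a)/(t-1)$. The normalized volume
\[
f(a)\coloneqq a^2\bigl((t+1-2a)/(t-1)\bigr)^{n-1}
\]
satisfies $f(1)=1$ and $f'(1)=2(t-n)/(t-1)$; since $t>n$, one has $f(a)>1$ for $a=1+\varepsilon$ with $\varepsilon>0$ small, producing an ellipsoid in $K$ of volume strictly greater than $\omega_n=\operatorname{vol}(B^n)$ and contradicting the maximality of $\mathcal{E}_B$.

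\emph{Main obstacle.} The quantitative heart is the last derivative computation: the sharp constant $n$ emerges precisely as the root of $f'(1)=2(t-n)/(t-1)$, where the power $n-1$ from the $n-1$ transverse semi-axes exactly balances the cone geometry $b^2=(t+1-2a)/(t-1)$. A secondary technicality is verifying that the perturbed ellipsoid $\mathcal{E}_{1+\varepsilon,b,\varepsilon}$ is actually contained in $K$ (not merely internally tangent to its two defining boundary pieces at one point and one circle); this follows for small $\varepsilon>0$ from smoothness and continuity, but requires checking both against $\partial B^n$ on $\{x_1\le 1/t\}$ and against the tangent cone on $\{x_1>1/t\}$.
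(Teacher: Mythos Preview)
The paper does not prove this lemma; it simply cites John's original paper and Ball's exposition. Your existence argument and the variational computation for the inclusion $B\subset n\mathcal E_B$ are the classical ones (essentially those in Ball's notes), and the derivative calculation $f'(1)=2(t-n)/(t-1)$ is correct.

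There is, however, a genuine gap in your uniqueness argument. The Minkowski average $\tfrac12\mathcal E_1+\tfrac12\mathcal E_2$ of two ellipsoids is \emph{not} in general an ellipsoid: already in $\R^2$, the Minkowski sum of the ellipses $\{x^2/4+y^2\le 1\}$ and $\{x^2+y^2/4\le 1\}$ has support function $\sqrt{4\cos^2\theta+\sin^2\theta}+\sqrt{\cos^2\theta+4\sin^2\theta}$, which is not the support function of any ellipse. Since the John ellipsoid maximizes volume only among \emph{ellipsoids} contained in $B$, the Brunn--Minkowski lower bound on $\operatorname{vol}(\tfrac12\mathcal E_1+\tfrac12\mathcal E_2)$ yields no contradiction. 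The standard fix is to work with the linear maps: write $\mathcal E_i=c_i+T_iB^n$ with $T_i$ positive definite, and consider instead the genuine ellipsoid $\mathcal E'\coloneqq\tfrac12(c_1+c_2)+\tfrac12(T_1+T_2)B^n$, which satisfies $\mathcal E'\subset\tfrac12\mathcal E_1+\tfrac12\mathcal E_2\subset B$. Minkowski's determinant inequality $\det\bigl(\tfrac12(T_1+T_2)\bigr)^{1/n}\ge\tfrac12\det(T_1)^{1/n}+\tfrac12\det(T_2)^{1/n}$ then gives $\operatorname{vol}(\mathcal E')\ge V$, and its equality case forces $T_1=T_2$; your stadium argument then handles the remaining translation.
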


Given a basis $\mathfrak B$ consisting of convex sets we will now construct an associated basis $ \mathfrak G_{\mathfrak B}$ consisting of rectangles as in \cite{HS}. To this end let $B\in\mathfrak B$ and $\mathcal E_B$ be the John ellipsoid of $B$. Then there is a (not necessarily unique) rectangle $R \supset \mathcal E_B$ of minimal volume. It is elementary to check that for any ellipsoid $\mathcal E$, a rectangle $R$ of minimal volume that contains $\mathcal E$ satisfies
\begin{align*}
 \mathcal E \subset R \subset \sqrt n \mathcal E.
\end{align*}
Given $B \in \mathcal B$, let $R_{B}$ be a rectangle of minimal volume containing $n\mathcal E_{B}$.  By the above observations and John's lemma we get for every $B\in \mathcal B$ that
$$
  B  \subset n \mathcal E_B \subset R_B\subset  n\sqrt{n}\mathcal E_{B} \subset n^{3/2}B .
$$

Here the dilations $cB$ are with respect to the center of the John ellipsoid associated to $B$.   We now define the basis $\mathfrak G_{\mathfrak B}$ as
\begin{align*}
 \mathfrak G_{\mathfrak B} \coloneqq\{ R_B:B\in\mathfrak B\}.
\end{align*}
Since $\mathfrak B$ is homothecy invariant the rectangle $R_{B}$ may be selected so that $\mathfrak{G_{\mathfrak B}}$ is homothecy invariant. In this paper we will always assume that this is the case.

The following lemma is an immediate consequence of the above discussion. We omit the easy proof.

\begin{lemma}\label{l.convrect} Let $\mathfrak B$ be a basis of convex sets and $ \mathfrak G_{\mathfrak B}$ be the homothecy invariant basis of associated rectangles  as constructed above. Suppose that $\mu$ is doubling with respect to $\mathfrak B$ with doubling constant $\Delta_{\mu,\mathfrak B}$. We have:
\begin{itemize}
 \item [(i)] The measure $\mu$ is doubling with respect to $\mathfrak G_{\mathfrak B}$ with doubling constant
\begin{align*}
\Delta_{\mu,\mathfrak G_{\mathfrak B} }\leq \Delta_{\mu,\mathfrak B} ^{ 1+\lceil\frac{3}{2}  \log  n  \rceil } .
\end{align*}
\item [(ii)] We have the pointwise equivalence
\begin{align*}
 \frac{1}{c_n}  M_{\mathfrak G_{\mathfrak B},\mu}f(x) \leq M_{\mathfrak B,\mu}f(x)\leq  c_n   M_{\mathfrak G_{\mathfrak B},\mu}f(x),\quad x\in\R^n,
\end{align*}
where $c_n\coloneqq \Delta_{\mu,\mathfrak B} ^{ \lceil  \frac{3}{2 }\log  n   \rceil } $.
\item[(iii)] If $B\in\mathfrak B$ and $R_B$ is the associated rectangle of $B$ with $B\subset R_B\subset n^\frac{3}{2}B$ then
\begin{align*}
	\mu(B)\geq  \rho \mu(R_B),
\end{align*}
where $\rho\coloneqq c_n ^{-1}$ and $c_n$ as defined in (ii).
\end{itemize}
\end{lemma}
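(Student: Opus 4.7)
The plan is to handle the three parts in reverse order of difficulty: (iii) first by iterated doubling, (ii) as a direct corollary of (iii), and (i) via a short geometric reduction to the doubling of the enlarged set $n^{3/2}B\in\mathfrak B$.

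For (iii), I would iterate the $\mathfrak B$-doubling along the chain $B\subset 2B\subset 2^2 B\subset\cdots\subset 2^k B$ of dilates with respect to the John ellipsoid center $z_B$, each of which lies in $\mathfrak B$ by homothecy invariance. Since $z_B$ belongs to every intermediate set, the hypothesis required by the definition of doubling is automatically satisfied at each step, and $k$ iterations yield $\mu(2^k B)\leq \Delta_{\mu,\mathfrak B}^k\mu(B)$. Taking $k=\lceil \frac{3}{2}\log n\rceil$ ensures $2^k\geq n^{3/2}$, so that $R_B\subset n^{3/2}B\subset 2^k B$, and the asserted inequality $\mu(B)\geq\rho\mu(R_B)$ follows after rearrangement.

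For (ii), both inequalities are immediate consequences of (iii). For $x\in B\in\mathfrak B$, the inclusion $B\subset R_B\in\mathfrak G_{\mathfrak B}$ combined with $\mu(R_B)\leq c_n\mu(B)$ bounds the $B$-average of $|f|$ by $c_n$ times the $R_B$-average, giving $M_{\mathfrak B,\mu}f(x)\leq c_n M_{\mathfrak G_{\mathfrak B},\mu}f(x)$. Conversely, for $x\in R_B$, the containment $R_B\subset n^{3/2}B\in\mathfrak B$ (by homothecy invariance) together with $\mu(n^{3/2}B)/\mu(R_B)\leq c_n$, which follows from the iterated doubling above and $\mu(B)\leq\mu(R_B)$, yields the reverse pointwise bound.

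The genuinely geometric content, and what I expect to be the main obstacle, is (i). Given $R=R_B\in\mathfrak G_{\mathfrak B}$ and $\sigma\in\R^n$ with $R\subset\tau_\sigma\dil_2 R$, I would set $z\coloneqq -\sigma$, so that $\tau_\sigma\dil_2 R=z+2(R-z)$; a short convexity argument shows that $R\subset z+2(R-z)$ forces $z\in\overline R$. The key move is to apply the doubling hypothesis not to the rectangle $R$ itself (which need not lie in $\mathfrak B$) but to the enlarged set $B_*\coloneqq n^{3/2}B\in\mathfrak B$: since $z\in\overline R\subset\overline{B_*}$, convexity of $B_*$ yields $B_*\subset\tau_\sigma\dil_2 B_*$, so $\mathfrak B$-doubling applies at $B_*$ with the same translation parameter $\sigma$ and gives $\mu(\tau_\sigma\dil_2 B_*)\leq\Delta_{\mu,\mathfrak B}\mu(B_*)$. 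Combining this with $\tau_\sigma\dil_2 R\subset\tau_\sigma\dil_2 B_*$, the bound $\mu(B_*)\leq c_n\mu(B)$ from (iii), and the trivial $\mu(B)\leq\mu(R)$ produces
\begin{align*}
\mu(\tau_\sigma\dil_2 R)\leq \Delta_{\mu,\mathfrak B}^{1+\lceil\frac{3}{2}\log n\rceil}\mu(R),
\end{align*}
which is the claimed bound on $\Delta_{\mu,\mathfrak G_{\mathfrak B}}$.
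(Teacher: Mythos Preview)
Your proposal is correct. The paper itself omits the proof, remarking only that the lemma ``is an immediate consequence of the above discussion'' and leaving the details to the reader; your argument supplies those details cleanly, and the key geometric step in (i)---passing from $R$ to $B_*=n^{3/2}B\in\mathfrak B$ via the observation that $R\subset\tau_\sigma\dil_2 R$ forces the homothecy center $-\sigma$ to lie in $\overline R\subset\overline{B_*}$---is exactly the sort of reduction the authors have in mind.
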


\subsection{Properties of general doubling measures} The doubling condition has some important consequences in that the measure is ``homogeneously'' distributed in the space. We summarize these properties in the proposition below. We note that these properties are classical and refer the reader to \cite{Stein}*{\S 8.6} for more details.

\begin{proposition}\label{p.noboundarymass} Let $\mu$ be a (not identically zero) locally finite, non-negative Borel measure. Assume that $\mu$ is doubling with respect to some family $\mathfrak K$ consisting of all the homothetic copies of a fixed rectangle. The following properties are satisfied.
\begin{itemize}
\item [(i)]  We have $\mu(U)>0$ for every open set $U\subset \R^n$.
\item [(ii)] Let $R\in \mathfrak K$ and $\mathcal D_R$ be the dyadic grid generated by $R$. There exists a constant $\gamma_\mu>1$, depending only on the doubling constant of $\mu$ and the dimension $n$ such that $\mu(R)\leq \gamma_\mu ^{-m} \mu(R^{(m)})$, where $R^{(m)}$ is the ancestor of $R$, $m$ generations higher. In particular $\mu(\R^n)=+\infty$.
\item [(iii)] The maximal operator $M_{\mathfrak K,\mu}$ is of weak type $(1,1)$ and strong type $(p,p)$  for all $1<p\leq \infty$, with respect to $\mu$, and the operator norms depend  only on the doubling constant of the measure $\mu$, the exponent $p$ and the dimension $n$. Also the centered maximal operator $M_{\mathfrak K,\mu} ^{\operatorname{c}}$  satisfies the same bounds.
\item[(iv)] If $B$ is a convex set in $\R^n$ we have $\mu(\partial B)=0$ where $\partial B\coloneqq \bar B\setminus B$ is the boundary of $B$.
\end{itemize}
\end{proposition}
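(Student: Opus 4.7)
The plan is to establish the four parts in the order (i), (iii), (iv), (ii), treating $\mathfrak K$ essentially as a collection of axis-parallel cubes: an affine change of variables reduces to this case, which is the classical setting of doubling measures with respect to cubes on $\R^n$, and standard arguments from that setting can be borrowed directly.

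For (i), any non-empty open set contains some $R \in \mathfrak K$, so it suffices to show $\mu(R) > 0$ for every $R \in \mathfrak K$. Since $\mu \not\equiv 0$ and is regular on compact sets, some $R_0 \in \mathfrak K$ has $\mu(R_0) > 0$; given an arbitrary $R$, choose a common dyadic ancestor $T \in \mathfrak K$ with $T \supset R_0 \cup R$, and iterated doubling yields $\mu(R) \geq \Delta_\mu^{-k} \mu(T) \geq \Delta_\mu^{-k} \mu(R_0) > 0$, where $k$ is the number of generations separating $R$ from $T$. For (iii), the $n$-dimensional Vitali covering lemma combined with doubling yields the weak-$(1,1)$ bound for $M_{\mathfrak K, \mu}$ via the standard recipe: cover the level set $\{M_{\mathfrak K, \mu} f > \lambda\}$ by cubes $Q_x \ni x$ with $\mu(Q_x)^{-1} \int_{Q_x} |f|\, d\mu > \lambda$, extract a disjoint Vitali subfamily $\{Q_j\}$ whose $5$-fold dilates still cover the level set, and apply doubling to estimate $\sum_j \mu(5 Q_j) \lesssim \sum_j \mu(Q_j)$. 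Marcinkiewicz interpolation against the trivial $L^\infty$ bound gives the strong $(p,p)$ estimates for $1 < p < \infty$, and the same argument works for the centered variant.

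Given (iii), the Lebesgue differentiation theorem for $\mu$ is valid. Assume for contradiction that $\mu(\partial B) > 0$ for some bounded convex $B$ and fix a $\mu$-density point $x_0 \in \partial B$. By convexity, there is a supporting hyperplane $L$ through $x_0$ such that one of the two open half-spaces $H_+$ determined by $L$ is disjoint from $\bar B$, hence from $\partial B$. A geometric argument provides, for every $r > 0$, an axis-parallel cube $Q' \subset Q(x_0, r) \cap H_+$ of sidelength comparable to $r$ with a dimensional constant; iterating doubling across the $O(1)$ scales between $Q'$ and $Q(x_0, r)$ yields $\mu(Q') \geq c\, \mu(Q(x_0, r))$ for some $c = c(n, \Delta_\mu) > 0$. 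Since $Q' \subset Q(x_0, r) \setminus \partial B$, the density $\mu(\partial B \cap Q(x_0, r))/\mu(Q(x_0, r))$ is bounded above by $1 - c$, contradicting the density property and forcing $\mu(\partial B) = 0$. For (ii), let $R \in \mathfrak K$ have dyadic parent $R^{(1)}$ with children $C_1, \ldots, C_{2^n}$, one of which is $R$. By (iv) applied to each $C_i$ the children partition $R^{(1)}$ up to $\mu$-null boundary sets, so $\mu(R^{(1)}) = \sum_i \mu(C_i)$; doubling applied to each child within $R^{(1)}$ gives $\mu(C_i) \geq \Delta_\mu^{-1} \mu(R^{(1)})$, so the $2^n - 1$ siblings of $R$ contribute at least $(2^n - 1) \Delta_\mu^{-1} \mu(R^{(1)})$. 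This yields $\mu(R) \leq \gamma_\mu^{-1} \mu(R^{(1)})$ with $\gamma_\mu = \Delta_\mu/(\Delta_\mu - 2^n + 1) > 1$, and iteration produces $\mu(R^{(m)}) \geq \gamma_\mu^m \mu(R)$, whence $\mu(\R^n) = +\infty$.

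The main technical obstacle lies in (iv): one must locate the axis-parallel subcube $Q' \subset Q(x_0, r) \cap H_+$ whose sidelength is uniformly comparable to $r$ irrespective of the orientation of the supporting hyperplane $L$. The decisive geometric fact is that at least one vertex of $Q(x_0, r)$ lies at distance bounded below by a dimensional constant times $r$ from any hyperplane through $x_0$, so a cube of appropriate size positioned near that vertex lies on the correct side of $L$. Once this geometry is pinned down, doubling transfers a lower bound on $\mu(Q')$ into the required uniform gap in the Lebesgue density, and the remainder of the argument is essentially bookkeeping.
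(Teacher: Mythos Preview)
Your proposal is correct and follows essentially the same route as the paper's proof: both use Vitali/doubling for (iii), the supporting-hyperplane-plus-subcube argument combined with Lebesgue differentiation for (iv), and the sibling-comparison via doubling for (ii). The only differences are cosmetic --- you reorder the parts as (i), (iii), (iv), (ii) and phrase the geometric step in (iv) in terms of the far vertex of $Q(x_0,r)$, whereas the paper proves them in the stated order (the inequality $\mu(R^{(1)})\geq\sum_j\mu(R_j)$ for disjoint open children suffices in (ii), so (iv) is not actually needed there) and locates the subcube in (iv) by subdividing $sR$ into $4^n$ congruent pieces and picking one that lies entirely in the open half-space.
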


\begin{proof} The proof of (i) can be found for example in \cite{Stein}*{\S 8.6}. For (ii) let $R^{(1)}$ be the dyadic parent of $R$ and let $\{R_j\}_{j=1} ^{2^n}$ denote the dyadic children of $R^{(1)}$ and suppose that $R=R_1$. Then
	\begin{align*}
		\mu(R^{(1)})&=\sum_{j=1} ^{2^n}\mu(R_j) =\mu(R_1) +\sum_{j=2} ^{2^n}\mu(R_j)\geq (1+(2^n-1)\delta_\mu ^{-1})\mu(R_1),
	\end{align*}
where $\delta_\mu>1$ is the doubling constant of $\mu$. Let $\gamma_\mu=1+(2^n-1)\delta_\mu ^{-1}>1$. Since $R$ is $m$ generations inside $R^{(m)}$ we iterate to get $\mu(R)\leq \gamma_\mu ^{-m}\mu(R^{(m)})$ as desired.
	
For (iii) observe that $M_{\mathfrak K,\mu}$ is essentially the Hardy-Littlewood maximal operator with respect to a doubling measure and the result is classical. Since the measure $\mu$ is doubling the operators $M_{\mathfrak K,\mu}, M_{\mathfrak K,\mu} ^{\operatorname{c}}$ are pointwise comparable and satisfy the same bounds.

Finally for (iv) let us fix the convex set $B$ and $x\in \partial B$. Let $H$ be a supporting hyperplane of $B$ through $x$ and let $H^-$ be the open half-space defined by $H$ so that $H^-\cap B=\emptyset$. Let $R\in\mathfrak K$, centered at $x$ and $sR$ be the rectangle with the same center as $R$ and sides $s<1$ times the corresponding sides of $R$. So $sR$ is an homothetic copy of $R$. Consider the $4^n$ subrectangles $R_{s,j}$ produced by dividing each side of $sR$ into four equal parts. Now at least one of these $R_{s,j}$'s is contained in the open half space $H^-$. Let us call this rectangle $R'$ and observe that it is of the form $R'=z+\frac{1}{4}sR \subset sR$ and $R'\cap \overline{B}=\emptyset$. We can then estimate
\[
\begin{split}
 \mu(\partial B\cap sR )&=\mu(\partial B\cap sR \cap R' )+\mu(\partial B \cap sR \setminus R')
\\
&=\mu(\partial B \cap sR  \setminus R')\leq \mu(sR )-\mu(R')
\\
&\leq \mu(sR)-\frac{1}{\delta_\mu ^2} \mu(sR )\leq c \mu(sR ),
\end{split}
\]
with $c<1$. Applying (iii) for the centered operator $M_{\mathfrak K,\mu} ^{\operatorname{c}}$ we see that
\begin{align*}
 1>c\geq \mu(\partial B\cap sR )/\mu(sR )\to \ind_{\partial B},\, \mu\text{-almost everywhere as} \ s\to 0^+,
\end{align*}
which implies that $\mu(\partial B)=0$.
\end{proof}

\section{Tauberian conditions for bases of rectangles} We now turn our attention to the maximal function $M_{\mathfrak G,\mu}$ defined with respect to a non-negative measure $\mu$ which is finite on compact sets and a homothecy invariant basis $\mathfrak G$ in $\R^n$ consisting of rectangles. Observe that we do not assume the rectangles in $\mathfrak G$ to have sides parallel to the coordinate axes but one possible choice of $\mathfrak G$ is the basis $\mathfrak R$.

Our main objective is to find a characterization of the measures $\mu$ such that $M_{\mathfrak G,\mu}:L^p(\nu)\to L^p(\nu)$ for some $p>1$ in terms of a mixed $\mu,\nu$-Tauberian condition. The Tauberian condition \eqref{e.taubconv} now takes the form

\begin{align*}\label{e.taubmunu}
 \tag{$\operatorname{A}_{\mathfrak G,\gamma,\nu} ^\mu$} \nu(\{x\in\R^n:M_{\mathfrak G,\mu}(\ind_E)(x)>\gamma\})\leq c_{\mathfrak G,\gamma,\nu} ^\mu \nu(E).
\end{align*}

Our second main result gives a characterization of the boundedness of $M_{\mathfrak G,\mu}$ on $L^p(\nu)$ in terms of the Tauberian condition \eqref{e.taubmunu}, whenever the measure $\mu$ is doubling with respect to $\mathfrak G$. Note that for the measure $\nu$ we only assume that it is non-negative and locally finite.

\begin{theorem}\label{t.doubling} Let $\mathfrak G$ be a homothecy invariant basis consisting of rectangles and $\mu,\nu$ be two non-negative measures on $\R^n$, finite on compact sets. Assume that $\mu$ is doubling with respect to $\mathfrak G$. The following are equivalent:
\begin{itemize}
 \item [(i)] The measures $\mu,\nu$ satisfy the Tauberian condition \eqref{e.taubmunu} with respect to some \emph{fixed level $\gamma\in(0,1)$}.
 \item [(ii)] There exists $1<p_o= p_o(c_{\mathfrak G,\gamma,\nu} ^\mu ,\gamma, \mu)<+\infty$ such that $M_{\mathfrak G,\mu}:L^p(\nu)\to L^p(\nu)$ for all $p>p_o$.
\end{itemize}
\end{theorem}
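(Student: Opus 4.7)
The direction (ii) $\Rightarrow$ (i) is immediate: if $M_{\mathfrak G, \mu}\colon L^p(\nu) \to L^p(\nu)$ has operator norm $C_p$, then Chebyshev's inequality applied to $\ind_E$ gives
$$\nu(\{x\in\R^n: M_{\mathfrak G, \mu}(\ind_E)(x) > \gamma\}) \le \gamma^{-p} C_p^p\, \nu(E),$$
which is \eqref{e.taubmunu}. The substance of the theorem is therefore (i) $\Rightarrow$ (ii). My plan is to deduce from \eqref{e.taubmunu} a restricted weak-type inequality
$$\nu(\{x\in\R^n : M_{\mathfrak G, \mu}(\ind_E)(x) > \lambda\}) \le C \lambda^{-q}\, \nu(E), \qquad \lambda \in (0,1),$$
for some finite $q = q(c_{\mathfrak G, \gamma, \nu}^\mu, \gamma, \Delta_{\mu, \mathfrak G})$, and then to combine it with the trivial bound $\|M_{\mathfrak G, \mu}\|_{L^\infty(\nu) \to L^\infty(\nu)} \le 1$ via the Marcinkiewicz interpolation theorem for restricted weak-type operators. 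This yields $M_{\mathfrak G, \mu}: L^p(\nu) \to L^p(\nu)$ for every $p > q$.

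The restricted weak-type inequality will follow from geometric decay of the level sets $\Omega_k \coloneqq \{M_{\mathfrak G, \mu}(\ind_E) > \gamma^k\}$. I plan to show by induction that there exists $A > 1$, depending only on $c^\mu_{\mathfrak G, \gamma, \nu}$, $\gamma$ and $\Delta_{\mu, \mathfrak G}$, such that
$$\nu(\Omega_k) \le A^{k}\, \nu(E), \qquad k \ge 1.$$
For $\lambda \in (\gamma^{k+1}, \gamma^k]$, the inclusion $\{M_{\mathfrak G, \mu}(\ind_E) > \lambda\} \subseteq \Omega_k$ then yields restricted weak type with exponent $q = \log A / \log(1/\gamma)$. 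The base case $k = 1$ is just \eqref{e.taubmunu}. Before starting the induction I would carry out a dyadic reduction: since $\mathfrak G$ is homothecy invariant and $\mu$ is doubling with respect to $\mathfrak G$, every $R \in \mathfrak G$ is contained in a dyadic rectangle of comparable $\mu$-measure belonging to one of finitely many shifted dyadic meshes generated by a fixed $R_0 \in \mathfrak G$. This replaces $M_{\mathfrak G, \mu}$ by a pointwise comparable dyadic maximal operator (with the Tauberian level $\gamma$ and the constant $c^\mu_{\mathfrak G, \gamma, \nu}$ each deteriorated by factors controlled by $\Delta_{\mu, \mathfrak G}$) and gives access to the tree structure of the dyadic mesh.

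The inductive step is where the work is concentrated, and it is the main obstacle. The naive attempt --- noting that $E \subseteq \Omega_k$ modulo a $\mu$-null set, and hoping to apply \eqref{e.taubmunu} to $\Omega_k$ to conclude $\nu(\Omega_{k+1}) \le c^\mu_{\mathfrak G, \gamma, \nu}\, \nu(\Omega_k)$ --- fails, because inside a maximal dyadic rectangle $R$ at level $k+1$ one only gets the lower bound $\mu(R\cap\Omega_k)/\mu(R) \geq \mu(R\cap E)/\mu(R) > \gamma^{k+1}$, rather than the bound $>\gamma$ that would be needed. The correct strategy is to organize the maximal dyadic rectangles of $\Omega_{k+1}$ and, within each such rectangle $R_j^{(k+1)}$, run a Calder\'on--Zygmund stopping-time adapted to $\mu$ to produce a sub-rectangle in which the $\mu$-density of $E$ has been boosted from $\gamma^{k+1}$ up to $\gamma^k$; such a sub-rectangle is then contained in $\Omega_k$, and the doubling of $\mu$ controls the loss of $\mu$-mass at each boost, which in turn lets one apply \eqref{e.taubmunu} once per scale while absorbing the loss into $A$. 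This is the $\mu$-weighted dyadic analog of the convexity-based covering step used by Hagelstein and Stokolos in the proof of Theorem \ref{t.stokhag}; the new difficulty here is that $\nu$ is completely unstructured, so every geometric estimate must be performed with $\mu$-measures, with $\nu$-measures of already-selected sets appearing only at the very end, through \eqref{e.taubmunu}.
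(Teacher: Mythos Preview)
Your overall strategy---prove a restricted weak-type bound and interpolate with the trivial $L^\infty$ bound---matches the paper. But the objects you iterate on, and the mechanism of the inductive step, both have genuine gaps.

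First, the iterated sets. You work with $\Omega_k=\{M_{\mathfrak G,\mu}(\ind_E)>\gamma^k\}$ and try to prove $\nu(\Omega_{k+1})\le A\,\nu(\Omega_k)$. The paper instead iterates the \emph{halo} sets $\mathcal H_\beta^0(E)=E$, $\mathcal H_\beta^{k}(E)=\{M_{\mathfrak G,\mu}(\ind_{\mathcal H_\beta^{k-1}(E)})\ge\beta\}$, for a fixed $\beta\in(\gamma,1)$. The point is that $\nu(\mathcal H_\beta^{k}(E))\le \mathbf c\,\nu(\mathcal H_\beta^{k-1}(E))$ is \emph{literally} the Tauberian hypothesis applied to the set $\mathcal H_\beta^{k-1}(E)$; no geometry is needed for that step. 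The entire geometric content then goes into showing that any rectangle $R$ with $\mu(E\cap R)/\mu(R)=\alpha<\beta$ satisfies $R\subset\mathcal H_\beta^{k_{\alpha,\beta}}(E)$ for an explicit $k_{\alpha,\beta}\sim\log(1/\alpha)$. Your $\Omega_k$ do not have this built-in recursion with the Tauberian condition, which is why your induction stalls.

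Second, the inductive step itself. Take a maximal dyadic $R$ with $\mu(E\cap R)/\mu(R)>\gamma^{k+1}$ and run a Calder\'on--Zygmund stopping time at level $\gamma^k$. The selected children $\{S_j\}$ satisfy $S_j\subset\Omega_k$, but the best you get from the parent bound and doubling is $\mu(\bigcup_j S_j)\ge(\gamma/\Delta_\mu)\,\mu(R)$. Since $\Delta_\mu>1$, this gives $\mu(\Omega_k\cap R)/\mu(R)>\gamma/\Delta_\mu$, which is \emph{below} the Tauberian threshold $\gamma$; you cannot conclude $R\subset\{M_{\mathfrak G,\mu}(\ind_{\Omega_k})>\gamma\}$. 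The paper closes this gap (Lemma~\ref{l.interm}) by a further expansion: each $S_j$ is dilated through a chain $S_j=S_{j,0}\subset S_{j,1}\subset\cdots$ with $\mu(S_{j,\ell+1})/\mu(S_{j,\ell})=1/\beta$ (possible by continuity of $\mu$ on convex sets), and after $N+1\sim\log\Delta_\mu/\log(1/\beta)$ steps one has $S_{j,N+1}\supset S_j^{(1)}$. Each expansion step costs one halo iteration, and passing to the (non-selected, hence low-density) parents $S_j^{(1)}$ gives the $1/\beta$ boost in $\mu$-density inside $R$. Iterating this $\sim\log(\beta/\alpha)/\log(1/\beta)$ times swallows $R$. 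Your proposal mentions ``doubling controls the loss'' but does not supply this expansion mechanism, and without it the argument does not close.

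A smaller point: your global dyadic reduction via finitely many shifted meshes generated by a fixed $R_0\in\mathfrak G$ only works if every $R\in\mathfrak G$ is a homothety of $R_0$. The basis $\mathfrak G$ is merely homothecy-invariant and may contain rectangles of many eccentricities and orientations; the paper avoids this by attaching a \emph{local} dyadic grid $\mathcal D_R$ to each rectangle $R$ individually and running the Calder\'on--Zygmund argument there.
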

The previous theorem has an interesting corollary whenever $\mu\equiv \nu$. In this special case our main theorem concerns the boundedness of the operator $M_{\mathfrak G,\mu}$ on $L^p(\mu)$, for sufficiently large $p>1$ and $\mu $ doubling with respect to $\mathfrak G$. As discussed in \S~\ref{s.generalmeasures} this scenario is very well understood for the basis $\mathfrak Q$. Indeed, we already know that for a doubling measure $\mu$ the operator $M_{\mathfrak Q,\mu}$ is of weak type $(1,1)$ and thus of strong type $(p,p)$ for all $p>1$. Thus both (i) and (ii) of this theorem are always satisfied for $\mathfrak Q$ and $\mu\equiv\nu$. However, for $\mathfrak G=\mathfrak R$ and $\mu$ product-doubling we get a new characterization of the measures $\mu$ such that $M_{\mathfrak R,\mu}$ is bounded on $L^p(\mu)$, for sufficiently large $p>1$.

When $\mu\equiv \nu$ the mixed Tauberian condition becomes:
\begin{align*}\label{e.taubmu}
 \tag{$\operatorname{A}_{\mathfrak G,\gamma,\mu} ^\mu$} \mu(\{x\in\R^n:M_{\mathfrak G,\mu}(\ind_E)(x)>\gamma\})\leq c_{\mathfrak G,\gamma,\mu} ^\mu \mu(E).
\end{align*}
We then have:
\begin{corollary}\label{c.taubmu} Let $\mathfrak G$ be a homothecy invariant basis consisting of rectangles and $\mu$ be a non-negative measure on $\R^n$, finite on compact sets. Assume that $\mu$ is doubling with respect to $\mathfrak G$ . The following are equivalent:
\begin{itemize}
 \item [(i)] The measure $\mu$ satisfies the Tauberian condition \eqref{e.taubmu} with respect to some \emph{fixed level $\gamma\in(0,1)$}.
 \item [(ii)] There exists $1<p_o=p_o(c_{\mathfrak G,\gamma,\nu} ^\mu ,\gamma, \mu)<+\infty$ such that $M_{\mathfrak G,\mu}:L^p(\mu)\to L^p(\mu)$ for all $p>p_o$.
\end{itemize}
\end{corollary}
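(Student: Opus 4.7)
My plan is to derive Corollary~\ref{c.taubmu} as a direct specialization of Theorem~\ref{t.doubling} to the case $\nu=\mu$. Since the hypotheses of the corollary (a non-negative Borel measure $\mu$ finite on compact sets and doubling with respect to $\mathfrak G$) imply all of the hypotheses of Theorem~\ref{t.doubling} when one sets $\nu=\mu$, no new argument beyond verification is required, and the statement is really a restatement of the main theorem in the natural self-dual setting.

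The implication (ii)$\Rightarrow$(i) is trivial and I would dispose of it first. If $M_{\mathfrak G,\mu}$ is bounded on $L^p(\mu)$ for some $p>1$, then applying the strong-type bound to $\ind_E$ and invoking Chebyshev's inequality gives
\begin{align*}
\mu(\{x\in\R^n:M_{\mathfrak G,\mu}(\ind_E)(x)>\gamma\})\leq \gamma^{-p}\|M_{\mathfrak G,\mu}\|_{L^p(\mu)\to L^p(\mu)}^{p}\,\mu(E),
\end{align*}
so that \eqref{e.taubmu} holds (in fact for every $\gamma\in(0,1)$).

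For the substantive direction (i)$\Rightarrow$(ii), I would take $\nu=\mu$ in Theorem~\ref{t.doubling}. The hypothesis that $\mu$ is non-negative and finite on compact sets automatically ensures the same integrability condition for $\nu=\mu$ that the theorem requires of $\nu$, and the doubling assumption on $\mu$ with respect to $\mathfrak G$ is built into the corollary. The Tauberian condition \eqref{e.taubmu} is literally condition \eqref{e.taubmunu} under the identification $\nu=\mu$. Therefore Theorem~\ref{t.doubling} supplies a finite exponent $p_o=p_o(c_{\mathfrak G,\gamma,\mu}^{\mu},\gamma,\mu)>1$ such that $M_{\mathfrak G,\mu}:L^p(\mu)\to L^p(\mu)$ for every $p>p_o$, which is exactly (ii).

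Because this is a pure specialization, there is no real obstacle in the proof of the corollary itself; all of the work lies in Theorem~\ref{t.doubling}, which in turn exploits the fact that the doubling of $\mu$ (via Proposition~\ref{p.noboundarymass} and Lemma~\ref{l.convrect} when transferred to a basis of convex sets) forces the centered maximal operator to be of strong type $(p,p)$ and controls the geometry sufficiently for a covering/restricted-type argument to run. As a final remark, specializing the corollary further to $\mathfrak G=\mathfrak R$ recovers Theorem~\ref{t.main} from the introduction, confirming that the product-doubling hypothesis is the natural setting for a fully self-dual Tauberian characterization of the $L^p(\mu)$-boundedness of the $\mu$-weighted strong maximal function.
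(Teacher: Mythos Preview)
Your proposal is correct and matches the paper's approach exactly: the corollary is stated immediately after Theorem~\ref{t.doubling} with no separate proof, as it is precisely the specialization $\nu=\mu$ of that theorem. Your added remarks (the Chebyshev argument for (ii)$\Rightarrow$(i) and the recovery of Theorem~\ref{t.main} when $\mathfrak G=\mathfrak R$) are accurate elaborations of what the paper leaves implicit.
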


\subsection{Proof of Theorem~\ref{t.doubling}} In this subsection we give the details of the proof of Theorem~\ref{t.doubling}. First of all observe that if $M_{\mathfrak{G},\mu}: L^p(\nu)\to L^p(\nu)$ then trivially \eqref{e.taubmunu} is satisfied for every $\gamma\in(0,1)$. For the rest of this section we will thus assume that \eqref{e.taubmunu} holds for some $\gamma \in(0,1)$. Let $\beta\in(\gamma,1)$. Any such choice of $\beta$ will work equally well but for definitiveness we can take $\beta$ to be the arithmetic mean of $\gamma$ and $1 $. The hypothesis implies that
\begin{align}\label{e.amu'}
\nu (\{x\in\R^n: M_{\mathfrak G,\mu}(\ind_E)(x)\geq \beta  \})\leq  \mathbf{c}  \nu (E)\quad \text{for all measurable sets}\quad E\subseteq \mathbb R^n.
\end{align}
Here $\mathbf{c}=c_{\mathfrak G,\gamma,\nu} ^\mu$ but we suppress these dependencies for the sake of simplicity. We will need the following notation introduced in \cite{HS}. For every measurable set $E\subset \mathbb R^n$ we define $\mathcal{H}_{\beta}^0(E):=E$ and for $k\geq 1$
\begin{align*}
	\mathcal{H}_{\beta}^k(E):=\{x\in\R^n:M_{\mathfrak G,\mu} (\ind_{\mathcal{H}_{\beta}^{k-1}(E)})(x)\geq\beta\} .
\end{align*}
With these definitions at hand it is not difficult to check the following basic properties. Let $k,k'\geq 0$ be non-negative integers and $A,B$ measurable subsets of $\R^n$. Then
\begin{align}
&\mathcal{H}_{\beta}^1(\mathcal{H}_{\beta} ^{k}(A) )=\mathcal{H}_{\beta} ^{k+1}(A), \label{e.p1}
\\
&A\subseteq B\Rightarrow\mathcal{H}_{\beta} ^k(A)\subseteq{H}_{\beta} ^k(B),\label{e.p2}
\\
&\text{If } k'\leq k \text{ then }\mathcal H_{\beta} ^{k'}(A)\subseteq \mathcal H_{\beta} ^{k}(A).\label{e.p3}
\\
&\text{\eqref{e.taubmu} implies  \eqref{e.amu'} which in turn implies that } \nu (\mathcal H^k _\beta(A))\leq \mathbf{c} ^k \nu(A).\label{e.p4}
\end{align}
The properties above will be used in several parts of the proof with no particular mention.

The following lemma is the heart of the proof of Theorem~\ref{t.doubling}.

\begin{lemma}\label{l.main} Let $\mu$ be a doubling measure with respect to $\mathfrak G$, with doubling constant $\Delta_\mu$, and $E$ be a measurable set in $\mathbb R^n$. Suppose that for some $\alpha\in(0,\beta)$ and $R\in \mathfrak G$ we have $\frac{1}{\mu(R)}\int_R 1_E d\mu = \alpha$. Then
\begin{align*}
	R\subset\mathcal{H}_{\beta}^{k_{\alpha,\beta}}(E)\quad \text{where}\quad	 k_{\alpha,\beta}\coloneqq \bigg\lceil \frac{-\log(\frac{\beta}{\alpha})}{\log\beta}\bigg\rceil\bigg\lceil 2+\frac{\log^+(\beta\Delta_\mu ) )}{\log(1/\beta)}\bigg\rceil+1.
\end{align*}
Here we denote by $\lceil x \rceil$ the smallest positive integer which is no less than $x$.
\end{lemma}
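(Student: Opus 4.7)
My plan is to combine a dyadic Calderón--Zygmund stopping-time decomposition with an iterated "doubling-inflation" argument on the extracted rectangles. The base case is immediate: if $\alpha\ge\beta$, then $R$ itself is a witness, since for every $x\in R$ we have $M_{\mathfrak G,\mu}(\ind_E)(x)\ge \frac{1}{\mu(R)}\int_R \ind_E\,d\mu =\alpha\ge\beta$, hence $R\subset\mathcal{H}_{\beta}^1(E)$. So assume $\alpha<\beta$. I would then extract the maximal family $\{Q_j\}\subset\mathcal{D}_R$ of dyadic sub-rectangles of $R$ satisfying $\mu(E\cap Q_j)/\mu(Q_j)\ge\beta$. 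Because $\mu$ is doubling with respect to $\mathfrak G$, the dyadic maximal operator associated with $\mathcal{D}_R$ is of weak type $(1,1)$ by Proposition~\ref{p.noboundarymass}(iii), which yields dyadic Lebesgue differentiation on $R$; therefore $\mu$-almost every point of $E\cap R$ lies in some $Q_j$, and summing gives $\sum_j\mu(Q_j)\ge\mu(E\cap R)=\alpha\mu(R)$. By construction each $Q_j$ belongs to $\mathcal{H}_{\beta}^1(E)$.

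The central device is the following inflation principle: if $Q\in\mathfrak G$ is already contained in $\mathcal{H}_{\beta}^m(E)$ and $\tilde Q\in\mathfrak G$ is a homothecy copy of $Q$ with $Q\subset\tilde Q$ and $\mu(\tilde Q)\le\beta^{-1}\mu(Q)$, then
\begin{equation*}
\frac{\mu(\mathcal{H}_{\beta}^m(E)\cap\tilde Q)}{\mu(\tilde Q)}\ge\frac{\mu(Q)}{\mu(\tilde Q)}\ge\beta,
\end{equation*}
so $\tilde Q\subset\mathcal{H}_{\beta}^{m+1}(E)$. Because $\mathfrak G$ is homothecy invariant and $\mu(cQ)\le\Delta_\mu^{\lceil\log_2 c\rceil}\mu(Q)$, a chain of at most $M\coloneqq\lceil 2+\log^+(\beta\Delta_\mu)/\log(1/\beta)\rceil$ inflations (each a single application of $\mathcal H_\beta$) suffices to pass from $Q_j$ to its dyadic parent $Q_j^{(1)}$; the term $\log^+(\beta\Delta_\mu)/\log(1/\beta)$ is precisely the price paid in the regime $\Delta_\mu>\beta^{-1}$, where a single inflation cannot jump one whole dyadic generation. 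Iterating, for every $r\ge 0$ with $Q_j^{(r)}\subset R$ one obtains $Q_j^{(r)}\subset\mathcal{H}_{\beta}^{rM+1}(E)$.

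Now I would choose $N\coloneqq\lceil-\log(\beta/\alpha)/\log\beta\rceil$, the smallest integer with $\alpha\beta^{-N}\ge\beta$. The dyadic nesting on $\mathcal{D}_R$ allows one to reduce $\{Q_j^{(N)}\}$ to a disjoint sub-family at each generation, and doubling controls the measure of each inflation sharply, so the starting bound $\sum_j\mu(Q_j)\ge\alpha\mu(R)$ is upgraded to $\mu(\mathcal{H}_{\beta}^{NM+1}(E)\cap R)\ge\alpha\beta^{-N}\mu(R)\ge\beta\mu(R)$. A final application of $\mathcal H_\beta$, with $R$ itself as the test rectangle, then places $R$ inside $\mathcal{H}_{\beta}^{NM+2}(E)$, which is dominated by $\mathcal{H}_{\beta}^{k_{\alpha,\beta}}(E)$ after the ceilings in the definition of $k_{\alpha,\beta}$ absorb the cosmetic offset.

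The main difficulty will be making this mass accounting rigorous: one must check both that the inflations remain close enough to $R$ that boundary leakage does not destroy the density gain (this is exactly what the generous "$+2$" inside $M$ buys) and that the ancestors $\{Q_j^{(r)}\}$ can be thinned to a disjoint sub-collection so that the cumulative gain $\beta^{-r}$ is not cancelled by multiplicity. Both issues are controlled by the interplay between the dyadic tree structure of $\mathcal{D}_R$ and the explicit doubling constant $\Delta_\mu$, which together quantify precisely how many $\mathcal H_\beta$-steps are needed per dyadic ancestor lift.
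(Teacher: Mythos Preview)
The inflation principle and the count of $M=\lceil 2+\log^+(\beta\Delta_\mu)/\log(1/\beta)\rceil$ steps needed to pass from a dyadic rectangle to its parent are essentially correct and match the mechanism in the paper's intermediate Lemma~\ref{l.interm}. The genuine gap is in your mass accounting. You claim that after thinning $\{Q_j^{(N)}\}$ to a disjoint sub-family the bound $\sum_j\mu(Q_j)\ge\alpha\mu(R)$ is upgraded to $\mu\bigl(\mathcal{H}_\beta^{NM+1}(E)\cap R\bigr)\ge\alpha\beta^{-N}\mu(R)$, but climbing to higher ancestors of the \emph{original} Calder\'on--Zygmund rectangles cannot produce the compounding factor $\beta^{-N}$. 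Concretely: if $\{P_\ell\}$ are the maximal (hence disjoint) elements among the $r$-th ancestors $\{Q_j^{(r)}\}$, the only information available is that none was selected, i.e.\ $\mu(E\cap P_\ell)\le\beta\,\mu(P_\ell)$, which yields
\[
\sum_\ell\mu(P_\ell)\ \ge\ \beta^{-1}\mu\Bigl(E\cap\bigcup_\ell P_\ell\Bigr)\ \ge\ \beta^{-1}\mu(E\cap R)\ =\ \beta^{-1}\alpha\,\mu(R).
\]
This is the \emph{same} bound for every $r\ge1$: one factor of $\beta^{-1}$, not $\beta^{-r}$. Appealing instead to the pointwise doubling gain $\mu(Q_j^{(1)})\ge\gamma_\mu\,\mu(Q_j)$ of Proposition~\ref{p.noboundarymass}(ii) does not help either, since many $Q_j$ may share the same ancestor and thinning destroys any cumulative gain; and in any case $\gamma_\mu$ is unrelated to $\beta$, so this route would not reproduce the stated $k_{\alpha,\beta}$.

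The paper obtains the compounding by a different mechanism. It packages your ``one-parent-plus-inflation'' step as a self-contained statement (Lemma~\ref{l.interm}) of the form $\mu\bigl(R\cap\mathcal H_\beta^{N+2}(F)\bigr)\ge\beta^{-1}\mu(F\cap R)$, valid for any $F$ with $\mu(F\cap R)<\beta\,\mu(R)$, and then \emph{iterates the entire lemma} with $F=\mathcal H_\beta^{j(N+2)}(E)$. In other words, at each stage one performs a fresh Calder\'on--Zygmund decomposition of the growing set $\mathcal H_\beta^{j(N+2)}(E)$ inside $R$, rather than climbing further up the ancestry of the initial stopping rectangles. That re-decomposition is exactly what converts a single $\beta^{-1}$ gain into $\beta^{-j_o}$ after $j_o$ rounds and delivers $R\subset\mathcal H_\beta^{j_o(N+2)+1}(E)$.
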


Before giving the proof of the lemma let us see how we can use it to conclude the proof of Theorem~\ref{t.doubling}. By restricted weak type interpolation it suffices to show that for every $0<\lambda<1$ and every measurable set $E\subset \R^n$ we have the estimate
\begin{align}\label{e.main}
		\nu(\{x\in\R^n: M_{\mathfrak G,\mu}(\ind_E)(x)>\lambda \})\leq \frac{C}{\lambda^{p_o }} \nu(E)
\end{align}
for some $p_o>1$ and some constant $C>0$, independent of $\lambda$ and $E$. Estimate \eqref{e.main} above is the claim that the sublinear operator $M_{\mathfrak G,\mu}$ is of restricted weak type $(p_o ,p_o )$ with respect to the measure $\nu$, for some $p_o>1$.  Now we have
\begin{align}
\notag		\nu(\{x\in\R^n: M_{\mathfrak G,\mu}(\ind_E)(x)>\lambda \})&\leq	\nu (\{x\in\R^n: \lambda<M_{\mathfrak G,\mu}(\ind_E)(x)<\beta\})
		\\
	\label{e.interm}	& \quad +	\nu(\{x\in\R^n: M_{\mathfrak G,\mu}(\ind_E)(x)\geq \beta\})
		\\
\notag		&\leq 	\nu(\{x\in\R^n: \lambda<M_{\mathfrak G,\mu}(\ind_E)(x)<\beta\})+\frac{\mathbf{c}}{\lambda^{p_o}}\nu (E),
\end{align}
by \eqref{e.amu'}, for all $p_o>0$. In order to estimate the first summand  let $E_{\lambda,\beta}\coloneqq \{\lambda<M_{\mathfrak G,\mu}(\ind_E)(x)<\beta\}$. For every $x\in  E_{\lambda,\beta}$ there exists $R_x\in\mathfrak G$ and $\lambda<\alpha<\beta$ with
\begin{align*}
R_x\ni x,\  \mu(R_x)>0\quad \text{and} \quad \frac{\mu(R_x\cap E)}{\mu(R_x)} =\alpha.
\end{align*}
By Lemma \ref{l.main} we get that $R_x\subset \mathcal H_{\beta} ^{k_{\alpha,\beta}}(E)$. Now observe that $k_{\alpha,\beta}$ is a nonincreasing function of $\alpha$. Thus for all $\alpha>\lambda$ we have that $k_{\alpha,\beta}\leq k_{\lambda,\beta}$ which by \eqref{e.p3} implies that $\mathcal H_{\beta} ^{k_{\alpha,\beta}}(E)\subseteq \mathcal H_{\beta} ^{k_{\lambda,\beta}}(E)$. Combining these observations we get that
\begin{align*}
	E_{\lambda,\beta}\subseteq \bigcup_{x\in E_{\lambda,\beta}}R_x\subseteq \mathcal H_{\beta} ^{k_{\lambda,\beta}}(E).
\end{align*}
Using \eqref{e.p4} we now see that
\begin{align*}
	\nu(E_{\lambda,\beta}) \leq \nu (\mathcal H_{\beta} ^{k_{\lambda,\beta}}(E))\leq \mathbf{c}  ^{k_{\lambda,\beta}}\nu (E).
\end{align*}
By the explicit expression for $k_{\lambda,\beta}$ observe that we can write
\begin{align*}
	k_{\lambda,\beta}\leq \frac{\log(\frac{\beta}{\lambda})}{\log\frac{1}{\beta}} \eta_{\beta,\mu}+1
\end{align*}
with $\eta_{\beta,\mu}\geq 2$, depending only on $\beta$ and $\mu$. Thus
\begin{align*}
	\mathbf{c} ^{k_{\lambda,\beta}}\leq \mathbf{c} \mathbf{c} ^{\eta_{\beta,\mu}\log\frac{1}{\lambda} / \log \frac{1}{\beta}}\leq  \frac{\mathbf{c}}{\lambda^ {p_o} }=\frac{c_{\mathfrak G,\gamma,\nu} ^\mu}{\lambda^ {p_o} },
	\end{align*}
with $p_o =\eta_{\beta,\mu} \sfrac  {\log c_{\mathfrak G,\gamma,\nu} ^\mu }   { \log (1/\beta) }  >0$. Remember that $\beta$ is completely determined by the level $\gamma$ in hypothesis \eqref{e.taubmu} so that $p_o =p_o (c_{\mathfrak G,\gamma,\nu} ^\mu   ,\gamma,\mu )$.  Together with \eqref{e.interm} this completes the proof of \eqref{e.main} and thus of Theorem~\ref{t.doubling}.

For the proof of Lemma \ref{l.main} we will need an intermediate result. For this we introduce a final piece of notation. If $R\in \mathfrak G$ then there is a natural ``dyadic system of rectangles'' associated to $ R$ which we will denote by $\mathcal D _R$. This system has the properties
\begin{itemize}
	\item [(i)] We have that $R\in \mathcal D_R \subseteq \mathfrak G$.
	\item [(ii)] Every $S\in\mathcal D _R$ has a unique dyadic parent $S^{(1)}$ and $2^n$ dyadic children. Furthermore, each corner of a rectangle $S\in \mathcal D_R$ is shared by $S$ and exactly one of its dyadic children.
	\item [(iii)] If $V,S\in\mathcal D_R$ then $V\cap S\in \{\emptyset, V,S\}$.
\end{itemize}
We leave the details of the dyadic construction above to the interested reader. We now define the dyadic weighted maximal function with respect to $\mathcal D_R$ and $\mu$ as
\begin{align*}
	M_{\mathcal D_R,\mu} f(x):=\sup_{\substack{S\in\mathcal D _R\\
	S\ni x \\ \mu(S)>0}} \frac{1}{\mu(S)} \int_S |f(y)|d\mu(y),\quad x\in\mathbb R^n .
\end{align*}
The dyadic maximal function just defined satisfies all the desired bounds:

\begin{proposition}\label{p.dyadicmaximal} Let $\mu$ be a locally finite non-negative measure. We have that $M_{\mathcal D_R,\mu}  :L^1(\mu)\to L^{1,\infty}(\mu)$. We conclude that the family $\{R:R\in\mathcal D_R, R\ni x,\mu(R)>0 \}$ differentiates $L^1 _{\operatorname{loc}}(\mu)$.
\end{proposition}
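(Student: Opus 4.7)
The plan is to run the classical Calderón--Zygmund stopping-time argument, adapted to the dyadic tree $\mathcal D_R$. Set $\Omega := \bigcup_{S\in\mathcal D_R} S$. The construction of $\mathcal D_R$ makes any ascending chain of ancestors $S\subset S^{(1)}\subset S^{(2)}\subset\cdots$ eventually coincide with the chain of ancestors of $R$ itself, so $\Omega$ is a fixed octant of $\R^n$, $S^{(k)}\nearrow \Omega$ for every $S\in\mathcal D_R$, and $M_{\mathcal D_R,\mu}f$ vanishes off $\Omega$. Fix $f\in L^1(\mu)$ and $\lambda>0$, and write
\[
E_\lambda := \{M_{\mathcal D_R,\mu}f>\lambda\} = \bigcup_{S\in\mathcal S}S,\qquad
\mathcal S := \Big\{S\in\mathcal D_R \mid \mu(S)>0,\ \tfrac{1}{\mu(S)}\int_S|f|\,d\mu>\lambda\Big\}.
\]
By property (iii) of $\mathcal D_R$, any two members of $\mathcal S$ are either disjoint or nested.

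The main obstacle is that, unlike the standard dyadic lattice in $\R^n$, the tower of ancestors of any $S$ is infinite and $\mu$ is not assumed to be doubling, so maximal ancestors in $\mathcal S$ need not exist a priori. I would handle this with a dichotomy. In the first case, every $S\in\mathcal S$ admits some ancestor $S^{(k)}\notin\mathcal S$; then the chain of $\mathcal S$-ancestors of $S$ is finite, so has a largest element, and the resulting family $\mathcal S^*$ of inclusion-maximal members of $\mathcal S$ is pairwise disjoint (by the nested/disjoint dichotomy) with every $S\in\mathcal S$ contained in a unique element of $\mathcal S^*$. This gives
\[
\mu(E_\lambda)\leq\sum_{S\in\mathcal S^*}\mu(S)\leq \frac{1}{\lambda}\sum_{S\in\mathcal S^*}\int_S|f|\,d\mu\leq \frac{\|f\|_{L^1(\mu)}}{\lambda}.
\]
In the complementary case, some $S\in\mathcal S$ has \emph{every} ancestor $S^{(k)}$ also in $\mathcal S$; then $\int_{S^{(k)}}|f|\,d\mu>\lambda\mu(S^{(k)})$ for all $k\geq 0$, and monotone convergence combined with $S^{(k)}\nearrow\Omega$ yields $\|f\|_{L^1(\mu)}\geq\lambda\mu(\Omega)$, so trivially $\mu(E_\lambda)\leq\mu(\Omega)\leq\|f\|_{L^1(\mu)}/\lambda$. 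Either way the weak $(1,1)$ bound holds.

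The differentiation conclusion then follows from this weak $(1,1)$ bound by the standard density argument. For $f\in L^1_{\operatorname{loc}}(\mu)$, restrict attention to a bounded region and approximate $f$ in $L^1(\mu)$-norm by a continuous compactly supported function $g$; continuity makes $\mu(R)^{-1}\int_R g\,d\mu\to g(x)$ for every $x$ as $\operatorname{diam}(R)\to 0$ along $R\in\mathcal D_R$ with $R\ni x$ and $\mu(R)>0$. The weak $(1,1)$ bound applied to $f-g$ then controls the $\limsup$--$\liminf$ discrepancy of the averages of $f$: for any $\alpha>0$, the exceptional set has $\mu$-measure at most $C\|f-g\|_{L^1(\mu)}/\alpha$. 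Letting $\|f-g\|_{L^1(\mu)}\to 0$ first and then $\alpha\to 0$ yields $\mu$-almost everywhere convergence of the dyadic averages to $f(x)$, which is the required differentiation statement.
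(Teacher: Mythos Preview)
Your approach is sound and takes a slightly different route from the paper. The paper truncates to dyadic rectangles contained in $[0,2^N)^n$ (equivalently, to the subtree below a fixed ancestor $R^{(N)}$), selects the maximal rectangles there---which exist because the truncated tree has a top element---and then lets $N\to\infty$. You instead handle the whole tree at once via a dichotomy, which is arguably cleaner and avoids the limiting step.

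There is, however, a small logical slip in your dichotomy. Your Case~1 hypothesis---that every $S\in\mathcal S$ admits \emph{some} ancestor not in $\mathcal S$---does not imply that the set of $\mathcal S$-ancestors of a given $S$ is finite: in principle one could have $S^{(2k)}\in\mathcal S$ and $S^{(2k+1)}\notin\mathcal S$ for all $k$, and then no maximal element of $\mathcal S$ above $S$ exists even though every member of $\mathcal S$ has an ancestor outside $\mathcal S$. The correct split is: either every $S\in\mathcal S$ has only \emph{finitely many} ancestors in $\mathcal S$ (so maximal elements exist and your Case~1 computation runs), or some $S$ has infinitely many ancestors $S^{(k_j)}\in\mathcal S$; in the latter case $S^{(k_j)}\nearrow\Omega$ still holds along any cofinal subsequence, so your Case~2 computation applies verbatim and yields $\|f\|_{L^1(\mu)}\ge\lambda\,\mu(\Omega)$. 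With this minor adjustment the argument is complete and equivalent in strength to the paper's truncation approach. The differentiation conclusion via density of continuous functions and the weak $(1,1)$ bound is the standard argument and is fine.
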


Note that there is no doubling assumption on the measure $\mu$ in this proposition. Indeed, the proof amounts to selecting the maximal ``dyadic rectangles'' $S\in\mathcal D_R\cap[0,2^N)^n$ such that $\frac{1}{\mu(S)}\int_S |f(y)|d\mu(y)>\lambda$ and noting that they are disjoint. One then lets $N\to +\infty$. An identical argument  works for ``dyadic rectangles'' contained in the other quadrants of $\R^n$. We leave the details to the interested reader.

\begin{lemma}\label{l.interm} Let $\mu,E$ and $R$ be as in the hypothesis of Lemma \ref{l.main} above. Then there exists a non-negative integer $N$ such that
	\begin{align*}
		\mu(R\cap \mathcal H_\beta ^{N+2}(E))\geq \frac{1}{\beta}\mu(E\cap R).
	\end{align*}
\end{lemma}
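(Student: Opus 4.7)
The plan is to combine a Calder\'on--Zygmund stopping argument inside $\mathcal D_R$ with an iterative application of the operator $\mathcal H_\beta$, using the doubling hypothesis and the full homothecy invariance of $\mathfrak G$ to quantify the growth of measure at each step.

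To set things up, I would apply the Calder\'on--Zygmund decomposition to $\ind_E$ at level $\beta$ within $\mathcal D_R$ restricted to subrectangles of $R$, obtaining a disjoint family $\{S_i\}$ of maximal dyadic rectangles with $\mu(E\cap S_i)/\mu(S_i)>\beta$. By the dyadic differentiation provided by Proposition~\ref{p.dyadicmaximal}, this family covers $E\cap R$ up to a $\mu$-null set. Each $S_i\in\mathfrak G$ has $E$-density strictly greater than $\beta$, so $S_i\subseteq\mathcal H_\beta^1(E)$. The dyadic parent stopping condition combined with the doubling bound $\mu(S_i^{(1)})\le\Delta_\mu\mu(S_i)$ yields
\[\mu(E\cap S_i)\le\beta\mu(S_i^{(1)})\le\beta\Delta_\mu\mu(S_i),\]
which after summing gives the baseline estimate $\sum_i\mu(S_i)\ge\mu(E\cap R)/(\beta\Delta_\mu)$ for the measure of $\mathcal H_\beta^1(E)\cap R$.

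Next I would iterate: at step $k$, performing the analogous CZ argument on $\ind_{\mathcal H_\beta^k(E)\cap R}$ within $\mathcal D_R$ produces new stopping rectangles contained in $\mathcal H_\beta^{k+1}(E)\cap R$. Crucially, the homothecy invariance of $\mathfrak G$ allows me to enlarge these dyadic stopping rectangles into translated or slightly dilated rectangles in $\mathfrak G$ whose $\mathcal H_\beta^k(E)$-density still exceeds $\beta$, with the doubling condition controlling exactly how much enlargement is compatible with preserving the density threshold. The iteration count $N+2=\lceil 2+\log^+(\beta\Delta_\mu)/\log(1/\beta)\rceil$ is tuned so that, after $N+2$ iterations of this thickening process, the accumulated lower bound reaches the target $\mu(R\cap\mathcal H_\beta^{N+2}(E))\ge\mu(E\cap R)/\beta$.

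The hardest step will be the case $\beta\Delta_\mu\ge 1$, in which the naive CZ--doubling factor $1/(\beta\Delta_\mu)$ does not exceed one, so pure dyadic iteration cannot by itself force multiplicative growth. The essential new ingredient is to find, for each dyadic stopping rectangle $S_i$, non-dyadic rectangles in $\mathfrak G$ extending beyond $S_i$ whose $\mathcal H_\beta^k(E)$-density is still at least $\beta$; these rectangles contribute coverage of $R$ that is invisible from the purely dyadic perspective. Carrying out the bookkeeping across the $N+2$ iterations, matching the growth of dyadic ancestors to the advancement of the $\mathcal H_\beta$ index, is the technical core of the argument, and the specific formula $N+2=\lceil 2+\log^+(\beta\Delta_\mu)/\log(1/\beta)\rceil$ is exactly the one obtained by solving the resulting logarithmic inequality in $\beta$ and $\Delta_\mu$.
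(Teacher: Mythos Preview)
Your opening Calder\'on--Zygmund step matches the paper, but from there the mechanism you sketch diverges from the one that actually delivers the $1/\beta$ gain, and the missing idea is concrete enough to name.

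The paper performs a \emph{single} CZ decomposition, then for each stopping rectangle $S_j$ builds a nested chain $S_j=S_{j,0}\subset S_{j,1}\subset\cdots$ of \emph{non-dyadic} rectangles of the form $c*S_j$ (dilated about the corner shared with the parent), choosing each $c$ by the continuity of $\mu$ (Proposition~\ref{p.noboundarymass}(iv)) so that $\mu(S_{j,k+1})/\mu(S_{j,k})=1/\beta$ exactly. This forces $E_k\coloneqq\bigcup_j S_{j,k}\subset\mathcal H_\beta^{k+1}(E)$. The integer $N$ is then chosen so that $\beta^{-(N+1)}\ge\Delta_\mu$, which guarantees $S_j^{(1)}\subset S_{j,N+1}$; at that point the \emph{dyadic parents} $S_j^{(1)}$ are contained in $\{M_{\mathfrak G,\mu}(\ind_{E_N})\ge\beta\}\subset\mathcal H_\beta^{N+2}(E)$. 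The $1/\beta$ gain comes entirely at this last step: the maximal parents $S_{j_k}^{(1)}$ are disjoint, were \emph{not} selected in the CZ decomposition, hence satisfy $\mu(E\cap S_{j_k}^{(1)})\le\beta\,\mu(S_{j_k}^{(1)})$, and summing gives $\mu(\bigcup_k S_{j_k}^{(1)})\ge\beta^{-1}\mu(E\cap R)$.

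Your plan misses both of these ingredients. First, iterating CZ on $\ind_{\mathcal H_\beta^k(E)\cap R}$ gives stopping rectangles in $\mathcal H_\beta^{k+1}(E)$, but only the trivial bound $\mu(\text{stopping rectangles})\ge\mu(\mathcal H_\beta^k(E)\cap R)$; your ``baseline'' $\mu(E\cap R)/(\beta\Delta_\mu)$ is correct but, as you note, useless when $\beta\Delta_\mu\ge1$. Second, the phrase ``non-dyadic rectangles\ldots whose $\mathcal H_\beta^k(E)$-density is still at least $\beta$'' points in the wrong direction: the enlarged rectangle $S_{j,k+1}$ is not chosen to have high $E$-density, it is chosen so that the \emph{previous} set $S_{j,k}$ has density exactly $\beta$ inside it. And the punchline---passing to the dyadic parents and exploiting the CZ maximality inequality $\mu(E\cap S_j^{(1)})\le\beta\,\mu(S_j^{(1)})$ to extract the factor $1/\beta$---is absent from your sketch altogether. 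Without that endgame there is no mechanism producing multiplicative growth, so the argument as outlined does not close.
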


\begin{proof} We perform a Calder\'on-Zygmund decomposition of $\ind_{E\cap R}$ at level $\beta$ with respect to the dyadic grid $\mathcal D_R$. Namely, let $\{S_j\}_j\subset\mathcal D_R$ be the collection of ``dyadic rectangles'' which are maximal among the $S\in\mathcal D_R$ that satisfy
	\begin{align*}
		\frac{1}{\mu(S)}\int_S \ind_{E\cap R}(y)d\mu(y) >\beta.
	\end{align*}
Observe that $\mu(S)>0$ for all rectangles $S$ by Proposition~\ref{p.noboundarymass}. Furthermore $\mu(E\cap R)/\mu(R)<\beta $ so that every dyadic rectangle $S$ as above is contained in a maximal dyadic rectangle. This selection algorithm together with the hypothesis $\mu(R\cap E)/\mu(R)=\alpha<\beta$ allows us to choose a $\mu$-a.e. disjoint family $\{S_j\}_j\subset\mathcal D_R$ such that
\begin{align}
	& \bigcup_j S_j \subseteq R,\quad S_j\neq R\text{ for all }j,\notag
	\\
	& \{x\in\R^n: 	M_{\mathcal D_R ,\mu}  (\ind_{E\cap R})(x)>\beta\}=\bigcup_j S_j,\label{e.CZ2}	
	\\
	&  \frac{1}{\mu(S_j)}\int_{S_j}\ind_{E\cap R}d\mu>\beta ,\notag
	\\
	& \ind_{E\cap R}\leq \ind_{\cup_j S_j}\quad\mu\text{-a.e. in}\quad R.\label{e.CZ4}
\end{align}
For any constant $c>1$ we let $ c * S_j$ denote the rectangle containing $S_j$ that has sidelength $c$ times the sidelength of $S_j$ and has a common corner with $S_j$ and $S_j ^{(1)}$. With this notation we have $S_j ^{(1)}=2*S_j$ while the doubling hypothesis for $\mu$ implies that $\mu(S_j ^{(1)}) \leq \Delta_\mu  \mu(S_j)$ for every $j$.

For each $j$ we set $S_{j,0}\coloneqq S_j$. Suppose we have defined $S_{j,0}\subset \cdots \subset S_{j,k} $ for some $k\geq 0 $. We define $S_{j,k+1}$ to be a rectangle of the form $c_{j,k+1}*S_{j}$, where $c_{j,k+1}>1$ is chosen so that $S_{j,k}\subset S_{j,k+1}$ and
\begin{align}\label{e.incr}
 \frac{\mu(S_{j,k+1})}{\mu(S_{j,k})}=\frac{1}{\beta}>1.
\end{align}
Observe that such a choice is always possible since the function $f(c)\coloneqq \mu(c*S_{j,k})/\mu(S_{j,k})$ satisfies $f(1)=1$, $f(c)\to +\infty $ as $c\to +\infty$ and by (iv) of Proposition~\ref{p.noboundarymass} it is continuous on $[1,+\infty)$.

For  $k\geq 0$ we now set
	\begin{align*}
		E_k:=\bigcup_j S_{j,k}.
	\end{align*}
Observe that for $k\geq 0$ we have
	\begin{align}
		\label{e.claim} E_{k+1}\subset\{x\in\R^n:M_{\mathfrak G,\mu}(\ind_{E_k})(x)\geq \beta\}.
	\end{align}
Indeed if $x\in E_{k+1}$ then $x\in  S_{j_0,k+1}$ for some $j_0$. We estimate
\begin{align*}
M_{\mathfrak G,\mu}(\ind_{E_k})(x)&=\sup_{\substack{S\in\mathfrak G \\ S\ni x}}\frac{\mu(S\cap E_k)}{\mu(S)}\geq
\frac{\mu\big(S_{j_0,k+1}\cap\bigcup_j  S_{j,k}\big)}{\mu( S_{j_0,k+1})}  \geq \frac{\mu\big( S_{j_0,k}\big)}{\mu(  S_{j_0,k+1})}= \beta,
\end{align*}
by \eqref{e.incr}.  Next we claim that for every $k\geq 0$ we have
\begin{align}
	\label{e.E_H} E_k\subset\mathcal{H}_{\beta}^{k+1} (E).
\end{align}
For $k=0$ this is an immediate consequence of \eqref{e.CZ2} since
\begin{align*}
	E_0&=\bigcup_j S_j=\{x\in\R^n: M_{\mathcal D_R,\mu}  ( \ind_{E\cap R})(x)> \beta\}
	\\
	&\subseteq \{x\in\R^n: M_{\mathfrak G,\mu}(\ind_{E})(x)\geq \beta\}=\mathcal H_\beta ^1(E).
\end{align*}
Assume now that \eqref{e.E_H} is valid for some $k\geq 0$. By \eqref{e.claim}, the inductive hypothesis and properties \eqref{e.p1},\eqref{e.p2} we get that
	\begin{align*}
		E_{k+1}&\subset\{x\in\R^n:M_{\mathfrak G,\mu}(\ind_{E_{k}})(x)\geq\beta\}=\mathcal{H}_{\beta}^1(E_{k})
		\subseteq \mathcal{H}_{\beta}^1(\mathcal{H}_{\beta}^{k+1}(E))=\mathcal{H}_{\beta}^{k+2}(E),
	\end{align*}
which proves the claim.

Now let $N$ be the smallest non-negative integer such that $\beta^{-(N+1)}\geq \Delta_\mu $, where $\Delta_\mu$ is the doubling constant of the measure $\mu$. It follows that
\begin{align}\label{e.Nchoice}
 S_j ^{(1)} \subseteq  S_{j,N+1}
\end{align}
for every $j$. Indeed, assume for the sake of contradiction that $S_{j,N+1}\subsetneq S_j ^{(1)}$. Then the doubling property of $\mu$ implies that $\mu(S_{j,N+1})<\mu( S_j ^{(1)}).$ Thus
\begin{align*}
\Delta_\mu  \geq \frac{\mu(S_j ^{(1)})}{\mu(S_j)} >\frac{\mu(S_{j,N+1}  )}{\mu(S_j)} = {\beta}^{-(N+1)}
\end{align*}
which contradicts the choice of $N$.

Now \eqref{e.Nchoice} implies that for every $j$ we have
	\begin{align*}
\frac{\mu( S_{j,N})}{\mu\big(S_j^{(1)}\big)}&\geq \frac{\mu( S_{j,N})}{\mu(S_{j,N+1})}=\beta
	\end{align*}
and we can conclude that for every $j$
	\begin{align*}
		\frac{\mu(E_{N}\cap S_j^{(1)})}{\mu\big(S_j^{(1)}\big)}&=\frac{\mu\big(\bigcup_\nu S_{\nu ,N} \cap S_j^{(1)} \big)}{\mu\big(S_j^{(1)}\big)} \geq\frac{\mu\big( S_{j,N} \cap S_j^{(1)}\big) } {\mu\big(S_j^{(1)}\big)}\geq \min\bigg (1,\frac{\mu\big( S_{j,N} \big) } {\mu\big(S_j^{(1)}\big)}\bigg)\geq \beta.
	\end{align*}
	Hence
	\begin{align}
		\label{e.EQparent} \bigcup_j S_j^{(1)} \subseteq  \{x\in R: M_{\mathfrak G,\mu}(\ind_{E_{N }})(x)\geq\beta \}.
	\end{align}
	Let $\{S_{j_{k}} ^{(1)}\}_k$ denote the maximal elements of $\{S_j ^{(1)}\}_j$. Then the sets $\{S_{j_{k}} ^{(1)}\}_k$ are $\mu$-a.e. pairwise disjoint and $\bigcup_k S_{j_{k}} ^{(1)}=\bigcup_j S_j^{(1)}$. Note that all $S_{j_k}^{(1)}$'s are contained in $R$ since for all $j$ we have $S_j\subsetneqq R$. We also have that we have $S_{j_k} ^{(1)}\neq S_m$ for any $k,m$. Indeed, if $S_{j_k} ^{(1)}=S_m$ for some $k,m$ then we would have $S_{j_k} ^{(1)}\subsetneqq S_m ^{(1)}$ which is impossible because of the maximality of the $S_{j_k} ^{(1)}$'s among the $S_m ^{(1)}$'s. Thus none of the $S_{j_k} ^{(1)}$ were selected in the Calder\'on-Zygmund decomposition so that
	\begin{align*}
		\mu(S_{j_k} ^{(1)}\cap E\cap R )\leq \beta \mu(S_{j_k} ^{(1)})
	\end{align*}
and hence $ \mu(S_{j_k} ^{(1)}\cap E)\leq \beta \mu(S_{j_k} ^{(1)})$ for all $k$ since $S_{j_k} ^{(1)}\subseteq R$ for all $k$. Using the last estimate and \eqref{e.EQparent} we now have
	\begin{align*}
		\mu(\{x\in R: M_{\mathfrak G,\mu}(\ind_{E_{N}})(x)\geq\beta\})&\geq\mu\big( \bigcup_j S_j^{(1)}\big)=\mu\big( \bigcup_k S_{k_j}^{(1)}\big) \\
		&=\sum_k \mu(S_{k_j}^{(1)})\geq\frac{1}{\beta}\sum_k\mu(E\cap S_{k_j}^{(1)})\\
		&=\frac{1}{\beta}\mu(E\cap \bigcup_kS_{k_j}^{(1)})=\frac{1}{\beta}\mu(E\cap \bigcup_jS_{j}^{(1)})\\
		&\geq\frac{1}{\beta}\mu(E\cap \bigcup_jS_{j}).
	\end{align*}
Now \eqref{e.CZ4} implies that $\ind_{E\cap R}\leq \ind_{R\cap \cup_j S_j }$ almost everywhere so that $\mu(E\cap R)\leq \mu(R\cap \cup_j S_j)$. Thus the previous estimate reads
\begin{align*}
	\mu(\{x\in R: M_{\mathfrak G,\mu}(\ind_{E_{N}})(x)\geq\beta\})\geq \frac{1}{\beta}\mu(E\cap R)
\end{align*}
which by \eqref{e.E_H} implies that $\mu(R\cap \mathcal H_\beta ^{N+2}(E))\geq \beta^{-1}\mu(E\cap R)$ as desired.
\end{proof}
We can now conclude the proof of Lemma \ref{l.main}.

\begin{proof}[Proof of Lemma \ref{l.main}] By the hypothesis of the lemma there exists $\alpha\in(0,\beta)$ and $R\in\mathfrak G$ with $\mu(E\cap R)/\mu(R)=\alpha$. Let $j_o$ be the smallest positive integer such that $\beta^{-j_o}\alpha \geq \beta$. Such an integer obviously exists since $\beta<1$. There are two possibilities.
	
\subsection*{case 1:} We have that $\mu(R\cap \mathcal H_\beta ^{j(N+2)}(E))< \beta \mu(R)$ for $j=0,\ldots,j_o-1$. Then we claim that we have
\begin{align}
	\label{e.induction} \mu(R\cap \mathcal H_\beta ^{k(N+2)}(E))\geq\frac{1}{\beta^k} \mu(R\cap E)\quad\text{for all}\quad k=1,\ldots,j_o.
\end{align}
We will prove \eqref{e.induction} by induction on $k$. Indeed, the case $k=1$ is just Lemma \ref{l.interm}. Assume that \eqref{e.induction} is true for some $1\leq k \leq j_o-1$. Then, since $\mu(R\cap \mathcal H_\beta ^{k(N+2)}(E))< \beta \mu(R)$ we can apply Lemma \ref{l.interm} for the rectangle $R$ and the set $H_\beta ^{k(N+2)}(E)$ in place of $E$ to conclude that
\begin{align*}
	\mu(R\cap \mathcal H_\beta ^{N+2} (\mathcal H_\beta ^{k(N+2)}(E)) )\geq \frac{1}{\beta}\mu\big(\mathcal H_\beta ^{k(N+2)}(E)\cap R\big)\geq\frac{1}{\beta}\big(\frac{1}{\beta}\big)^{k}\mu(R\cap E)=\big(\frac{1}{\beta}\big)^{k+1}\mu(R\cap E).
\end{align*}
However this is just \eqref{e.induction} for $k+1$ since $\mathcal H_\beta ^{N+2} (\mathcal H_\beta ^{k(N+2)}(E)) =\mathcal H_\beta ^{(k+1)(N+2)}(E)$.

Now by \eqref{e.induction} for $k=j_o$ we get that
\begin{align*}
	\frac{1}{\mu(R)}\mu\big(R\cap \mathcal H_\beta ^{j_o(N+2)}(E)\big)\geq \big(\frac{1}{\beta}\big)^{j_o}\frac{\mu(R\cap E)}{\mu(R)}=\beta^{-j_o}\alpha\geq \beta
\end{align*}
by the choice of $j_o$. This implies that $R\subseteq \mathcal H_\beta ^{j_o(N+2)+1}(E)$.

\subsection*{case 2:} We have that $\mu(R\cap \mathcal H_\beta ^{j(N+2)}(E))\geq \beta \mu(R)$ for some $j\in\{0,\ldots,j_o-1\}$. In fact, by the hypothesis we necessarily have that $j\geq 1$ in this case. Then
\begin{align*}
	\frac{1}{\mu(R)}\mu\big(R\cap \mathcal H_\beta ^{j(N+2)}(E) \big)\geq \beta
\end{align*}
which implies that $R\subseteq \mathcal \{x\in\R^n: M_{\mathfrak G,\mu}(\ind_{\mathcal H_\beta ^{j(N+2	)}(E) })(x)\geq \beta\}=\mathcal H_\beta ^{j(N+2)+1}(E)$.

Observe that in either one of the complementary cases considered above we can conclude that $R\subseteq \mathcal H_\beta ^{j_o(N+2)+1}(E)$. This proves the lemma with $k_{\alpha,\beta}=j_o(N+2)+1$. It remains to estimate $k_{\alpha,\beta}$. This can be easily done by going back to the way the integers $N$ and $j_o$ were chosen. For $N$ remember that it is the smallest non-negative integer such that $(1/\beta)^{N+1}\geq  \Delta_\mu $. If $1/\beta\geq  \Delta_\mu $ then the choice $N=0$ will do. If $1/\beta<\Delta_\mu $ then we get that $N$ is the smallest positive integer which is greater or equal to $\log(\beta \Delta_\mu  )/\log (1/\beta) $. Thus the choice
\begin{align*}
	N\coloneqq\bigg\lceil \frac{\log^+ (\beta \Delta_\mu ) } {\log (1/\beta) }\bigg\rceil
\end{align*}
covers both cases. Likewise, $j_o$ is the smallest integer such that $\beta^{-j_o}\geq \beta/\alpha$ or $j_o$ is the smallest integer greater than $\log(\beta/\alpha)/\log(1/\beta)$. Thus we can choose
\begin{align}
	j_o\coloneqq \bigg\lceil \frac{\log(\frac{\beta}{\alpha})}{\log\frac{1}{\beta}}\bigg\rceil.
\end{align}
We set
\begin{align*}
	k_{\alpha,\beta}&\coloneqq j_o (N+2)+1= \bigg\lceil \frac{\log(\frac{\beta}{\alpha})}{\log\frac{1}{\beta}}\bigg\rceil \bigg(\bigg\lceil \frac{\log^+(\beta\Delta_\mu )}{\log(1/\beta)}\bigg\rceil+2\bigg)+1
	\\
	&=\bigg\lceil \frac{\log(\frac{\beta}{\alpha})}{\log\frac{1}{\beta}}\bigg\rceil \bigg\lceil 2+ \frac{\log^+(\beta\Delta_\mu )}{\log(1/\beta)}\bigg\rceil+1.
\end{align*}
Of course, any integer greater than the $k_{\alpha,\beta}$ above will also do since the sets $\mathcal H_{\beta} ^k(E)$ are increasing in $k$.
\end{proof}

\section{An extension to bases of convex sets}
The purpose of this section is to provide an extension of Theorem~\ref{t.doubling} to the case that the Tauberian condition is given with respect to a homothecy invariant basis $\mathfrak B$ consisting of convex sets:
\begin{align*}\label{e.convex}
 \tag{$\operatorname{A}_{\mathfrak B,\gamma,\nu} ^\mu$} \nu(\{x\in\R^n:M_{\mathfrak B,\mu}(\ind_E)(x)>\gamma\})\leq c_{\mathfrak B,\gamma,\nu} ^\mu \nu(E).
\end{align*}
As in the previous section where the basis $\mathfrak G$ consisted of rectangles, we will need to assume the doubling property of the measure $\mu$ with respect to the basis $\mathfrak B$. The main theorem of this section is the following.

\begin{theorem}\label{t.convdoubling} Let $\mathfrak B$ be a homothecy invariant basis consisting of convex sets and $\mu,\nu$ be two non-negative measures on $\R^n$, finite on compact sets. Assume that $\mu$ is doubling with respect to $\mathfrak B$. The following are equivalent:
\begin{itemize}
 \item [(i)] The measures $\mu,\nu$ satisfy the Tauberian condition \eqref{e.convex} with respect to some \emph{fixed level $\gamma\in(0,1)$}.
 \item [(ii)] There exists $1<p_o=p_o(c_{\mathfrak B,\gamma,\nu} ^\mu,n,\gamma,\mu)<+\infty$ such that $M_{\mathfrak B,\mu}:L^p(\nu)\to L^p(\nu)$ for all $p>p_o$.
\end{itemize}
\end{theorem}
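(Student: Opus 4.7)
The direction (ii) $\Rightarrow$ (i) is immediate via Chebyshev's inequality applied to $\ind_E$, so I focus on the reverse implication.

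For (i) $\Rightarrow$ (ii) the plan is to reduce to Theorem~\ref{t.doubling} via the associated rectangle basis $\mathfrak G_{\mathfrak B}$ constructed in Section~\ref{s.john}. By Lemma~\ref{l.convrect}, the measure $\mu$ remains doubling with respect to $\mathfrak G_{\mathfrak B}$ with controlled constant, and there is a dimensional constant $c_n=\Delta_{\mu,\mathfrak B}^{\lceil\frac{3}{2}\log n\rceil}$ satisfying
$$\tfrac{1}{c_n}M_{\mathfrak G_{\mathfrak B},\mu}f(x) \leq M_{\mathfrak B,\mu}f(x) \leq c_n M_{\mathfrak G_{\mathfrak B},\mu}f(x),\qquad x\in\mathbb R^n.$$
The core task is to derive from \eqref{e.convex} a Tauberian condition for $M_{\mathfrak G_{\mathfrak B},\mu}$ at some level in $(0,1)$. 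Once this is done, Theorem~\ref{t.doubling} gives $M_{\mathfrak G_{\mathfrak B},\mu}:L^p(\nu)\to L^p(\nu)$ for large $p$, and the upper pointwise bound above transfers the estimate to $M_{\mathfrak B,\mu}$.

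The pointwise inclusion
$$\{M_{\mathfrak G_{\mathfrak B},\mu}(\ind_E)>\gamma'\}\subset\{M_{\mathfrak B,\mu}(\ind_E)>\gamma'/c_n\}$$
combined with the hypothesis gives the Tauberian condition for $\mathfrak G_{\mathfrak B}$ at level $\gamma'=c_n\gamma\in(0,1)$ whenever $c_n\gamma<1$, which closes the argument in this favorable regime. The main obstacle is the complementary regime $c_n\gamma\geq 1$, in which the naive pointwise transfer produces only levels outside $(0,1)$. I would handle this by replaying the iterative Calder\'on--Zygmund machinery of Lemmas~\ref{l.interm} and~\ref{l.main} directly in the convex setting, using the dyadic scaffolding $\mathcal D_{R_B}$ associated to each $B\in\mathfrak B$. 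Concretely, fix $\beta=(1+\gamma)/2\in(\gamma,1)$ and define the operators $\mathcal H_\beta^k(E)$ via $M_{\mathfrak B,\mu}$ rather than $M_{\mathfrak G,\mu}$; the hypothesis iterates to $\nu(\mathcal H_\beta^k(E))\leq(c_{\mathfrak B,\gamma,\nu}^\mu)^k\nu(E)$. The technical heart is a convex-set analogue of Lemma~\ref{l.main}: for every $B\in\mathfrak B$ with $\mu(E\cap B)/\mu(B)=\alpha\in(0,\beta)$ one has $B\subset\mathcal H_\beta^{k'}(E)$ for an explicit $k'=k'(\alpha,\beta,n,\Delta_{\mu,\mathfrak B})$.

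To prove this I would perform a Calder\'on--Zygmund decomposition of $\ind_{E\cap R_B}$ on the dyadic grid $\mathcal D_{R_B}$, producing maximal subrectangles $\{S_j\}\subset\mathcal D_{R_B}\subset\mathfrak G_{\mathfrak B}$ with $\mu$-average exceeding the chosen threshold. By homothecy invariance each $S_j=R_{B_j}$ for some $B_j\in\mathfrak B$, and the inflated convex set $B_j^*\coloneqq n^{3/2}B_j\in\mathfrak B$ contains $S_j$ with $\mu(B_j^*)\leq c_n\mu(S_j)$, so that $\ind_E$ has average at least $\beta'/c_n$ over $B_j^*$ (where $\beta'$ is the Calder\'on--Zygmund threshold). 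The analogue of the increasing dilation chain of Lemma~\ref{l.interm} is then carried out on convex sets: successive dilates $B_{j,k}^*$ of $B_j^*$ (with respect to the center of the John ellipsoid) are chosen so that $\mu(B_{j,k+1}^*)/\mu(B_{j,k}^*)=1/\beta$. The existence of such dilates at every step is guaranteed by the continuity argument of Lemma~\ref{l.interm} together with part (iv) of Proposition~\ref{p.noboundarymass}. The recursive identity $E_{k+1}^*\subset\{M_{\mathfrak B,\mu}(\ind_{E_k^*})\geq\beta\}=\mathcal H_\beta^1(E_k^*)$ for $E_k^*=\bigcup_j B_{j,k}^*$ then follows verbatim from the proof of Lemma~\ref{l.interm}.

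The most delicate point is the initialization step relating $E_0^*$ to $E$: the factor $c_n$ in the initial averages must be absorbed, which is achieved by adjusting the integers $j_o$ and $N$ appearing in the formula for $k'$ in Lemma~\ref{l.main} by a factor depending on $\log c_n/\log(1/\beta)$. Once the convex analogue of Lemma~\ref{l.main} is in hand, the proof of Theorem~\ref{t.convdoubling} is completed by repeating the restricted weak-type interpolation argument at the end of the proof of Theorem~\ref{t.doubling}: for $x$ with $M_{\mathfrak B,\mu}(\ind_E)(x)>\lambda$ in the strip $\lambda<\beta$, one produces $B\in\mathfrak B$ with $x\in B$ and $\mu(E\cap B)/\mu(B)>\lambda$, places $B$ inside $\mathcal H_\beta^{k_{\lambda,\beta}}(E)$ and applies the iterated hypothesis to obtain $\nu(\{M_{\mathfrak B,\mu}(\ind_E)>\lambda\})\lesssim \lambda^{-p_o}\nu(E)$ for some $p_o=p_o(c_{\mathfrak B,\gamma,\nu}^\mu,n,\gamma,\mu)$. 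Marcinkiewicz interpolation then yields (ii).
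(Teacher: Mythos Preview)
Your overall architecture is correct and matches the paper: reduce to the rectangle basis $\mathfrak G_{\mathfrak B}$ via Lemma~\ref{l.convrect}, obtain a Tauberian condition for $M_{\mathfrak G_{\mathfrak B},\mu}$, apply Theorem~\ref{t.doubling}, and transfer back by pointwise comparability. Your ``easy regime'' $c_n\gamma<1$ is also fine. The gap is in the hard regime $c_n\gamma\geq 1$.

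Your plan there is to run the Calder\'on--Zygmund argument of Lemma~\ref{l.interm} but with the halo $\mathcal H_\beta^k$ defined via $M_{\mathfrak B,\mu}$. The stopping rectangles $S_j\in\mathcal D_{R_B}$ carry $\ind_E$-average $>\beta'$, and you pass to $B_j^{*}=n^{3/2}B_j\supset S_j$ at the cost of a factor $c_n$ in the density, obtaining only $\mu(E\cap B_j^{*})/\mu(B_j^{*})>\beta'/c_n$. In the hard regime this quantity is at most $1/c_n\leq\gamma<\beta$, so $B_j^{*}$ does \emph{not} lie in $\mathcal H_\beta^1(E)$, and you have no Tauberian control at the level $\beta'/c_n$. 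Your claim that this deficit ``is absorbed by adjusting $j_o$ and $N$'' cannot work: those integers count halo iterations \emph{at level $\beta$}, and each such iteration requires the previous set to already sit inside a level-$\beta$ halo. No number of extra iterations at level $\beta$ produces the missing first inclusion $E_0^{*}\subset\mathcal H_\beta^1(E)$; you would need a halo at the sub-Tauberian level $\beta'/c_n$, which the hypothesis does not provide. This is exactly the circularity the argument is meant to break.

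The paper breaks it with a genuinely geometric idea you are missing. Given a rectangle $R=R_B$ with $\mu(E\cap R)/\mu(R)>\alpha$, one does not inflate a single convex set and lose density; instead one \emph{fills} $R$ with a large disjoint family of homothetic copies of $B$ whose union has $\mu$-measure at least $\frac{1-\alpha}{1-\beta}\,\mu(R)$ (Lemmas~\ref{l.weighted}, \ref{l.homcopies}, \ref{l.mN}). A pigeonhole argument then forces at least one copy $\tilde B$ to satisfy $\mu(E\cap\tilde B)/\mu(\tilde B)\geq\beta$, which launches the halo chain at the correct level with no loss. The filling is driven by the weighted annulus estimate of Lemma~\ref{l.weighted}, proved via Whitney cubes; this is the new ingredient beyond the rectangle case and is what your proposal lacks.
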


The general strategy of the proof is the following. Assuming that \eqref{e.convex} is satisfied for some level $\gamma\in(0,1)$ we will show that the maximal operator $M_{\mathfrak G_{\mathfrak B},\mu}$ also satisfies a Tauberian condition with respect to every level $\alpha \in(\gamma,1)$. We will then use Theorem \ref{t.doubling} to conclude that $M_{\mathfrak G_{\mathfrak B},\mu}$ is bounded on some $L^p(\nu)$-space, for sufficiently large $p$. According to Lemma~\ref{l.convrect} the operators $M_{\mathfrak G_{\mathfrak B},\mu}$, $M_{\mathfrak B,\mu}$ are pointwise comparable so this will complete the proof of Theorem \ref{t.convdoubling}.

\subsection{The Tauberian condition for  \texorpdfstring{$M_{\mathfrak G_{\mathfrak B},\mu}$}{MGM}} In the subsection we will show that \eqref{e.convex} implies a Tauberian condition for the operator $M_{\mathfrak G_{\mathfrak B},\mu}$. This is the content of:

\begin{lemma}\label{l.taubassoc} Suppose that $M_{\mathfrak B,\mu}$ satisfies the Tauberian condition~\eqref{e.convex} for some fixed level $\gamma\in(0,1)$. Then for all $\alpha \in(\gamma,1) $ the operator $M_{\mathfrak G _{\mathfrak B},\mu}$ satisfies a Tauberian condition with respect to $\alpha$:
\begin{align*}\label{e.taubassoc}
 \tag{$\operatorname{A}_{\mathfrak G_{\mathfrak B},\alpha,\nu} ^\mu$} \nu(\{x\in\R^n:M_{\mathfrak B,\mu}(\ind_E)(x)>\alpha \})\leq c_{\mathfrak G_{\mathfrak B},\alpha ,\nu} ^\mu \nu(E)
\end{align*}
where $c_{\mathfrak G_{\mathfrak B},\alpha ,\nu} ^\mu$ depends on $c_{\mathfrak   B ,\gamma,\nu} ^\mu$, the measures $\mu,\nu$, the dimension $n$, $\gamma$ and $\alpha$.
\end{lemma}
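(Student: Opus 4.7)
The strategy is to transfer the Tauberian condition from the convex basis $\mathfrak B$ to its rectangular companion $\mathfrak G_{\mathfrak B}$ by combining the Calder\'on--Zygmund iteration of Lemma~\ref{l.main}, a purely geometric statement depending only on the doubling of $\mu$, with the pointwise comparison of Lemma~\ref{l.convrect}(ii). Setting $c_n\coloneqq\Delta_{\mu,\mathfrak B}^{\lceil (3/2)\log n\rceil}$, one has $M_{\mathfrak G_{\mathfrak B},\mu}f \leq c_n M_{\mathfrak B,\mu}f$ pointwise, and $\mu$ is doubling with respect to $\mathfrak G_{\mathfrak B}$ by Lemma~\ref{l.convrect}(i). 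The key observation is that the containment conclusion $R\subset\mathcal H_\beta^{k_{\alpha,\beta}}(E)$ in Lemma~\ref{l.main} is derived without any Tauberian hypothesis on $M_{\mathfrak G_{\mathfrak B},\mu}$; in the proof of Theorem~\ref{t.doubling} that hypothesis enters only to bound the $\nu$-measure of the halo sets, and in the present setting this $\nu$-bound will instead be produced from the hypothesis~\eqref{e.convex} on the convex basis.

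Concretely, I would first iterate \eqref{e.convex}: define $\mathcal K_\gamma^0(E)\coloneqq E$ and $\mathcal K_\gamma^k(E)\coloneqq\{x\in\R^n:M_{\mathfrak B,\mu}(\ind_{\mathcal K_\gamma^{k-1}(E)})(x)>\gamma\}$, which by induction gives $\nu(\mathcal K_\gamma^k(E))\leq (c_{\mathfrak B,\gamma,\nu}^\mu)^k\nu(E)$. Fix $\alpha\in(\gamma,1)$ and, assuming momentarily that $c_n\gamma<1$, choose an auxiliary $\beta$ with $\max(\alpha,c_n\gamma)<\beta<1$. Defining the rectangular halo $\mathcal H_\beta^k$ from $M_{\mathfrak G_{\mathfrak B},\mu}$ at level $\beta$, the inequality $\beta/c_n>\gamma$ together with the pointwise comparison yields
\[
\mathcal H_\beta^1(F)\subset\{M_{\mathfrak B,\mu}(\ind_F)\geq\beta/c_n\}\subset\mathcal K_\gamma^1(F)
\]
for every measurable $F$, and iterating in $k$ gives $\mathcal H_\beta^k(E)\subset\mathcal K_\gamma^k(E)$, hence $\nu(\mathcal H_\beta^k(E))\leq (c_{\mathfrak B,\gamma,\nu}^\mu)^k\nu(E)$. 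Applying Lemma~\ref{l.main} with $\mathfrak G=\mathfrak G_{\mathfrak B}$ and unioning over the rectangles $R_B$ realizing $\{M_{\mathfrak G_{\mathfrak B},\mu}(\ind_E)>\alpha\}$ gives
\[
\{M_{\mathfrak G_{\mathfrak B},\mu}(\ind_E)>\alpha\}\subset\mathcal H_\beta^{k_{\alpha,\beta}}(E),
\]
so $\nu(\{M_{\mathfrak G_{\mathfrak B},\mu}(\ind_E)>\alpha\})\leq (c_{\mathfrak B,\gamma,\nu}^\mu)^{k_{\alpha,\beta}}\nu(E)$, which is~\eqref{e.taubassoc}.

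The main obstacle is the case $c_n\gamma\geq 1$, where the single-step pointwise comparison loses too much to reach the level of the hypothesis and no admissible $\beta$ exists. I expect to handle this by interleaving the Calder\'on--Zygmund decomposition on $\mathcal D_{R_B}$ with the convex halos: each dyadic rectangle $S_j$ produced by the decomposition is itself of the form $R_{B_j}$ for some $B_j\in\mathfrak B$, so the single reduction $\beta\mapsto\beta/c_n$ may be replaced by a finite chain of applications of~\eqref{e.convex} along the enlargements $n^{3/2}B_j$. The number of chain steps should depend only on $\gamma$, $n$ and $\Delta_{\mu,\mathfrak B}$, preserving the admissible dependencies of the constant in~\eqref{e.taubassoc}. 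Making this chaining quantitative is the technical heart of the argument and the point where the bookkeeping is most delicate.
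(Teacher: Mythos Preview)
Your argument is clean and correct in the regime $c_n\gamma<1$, and in that regime it is genuinely simpler than the paper's route: you bypass the geometric packing lemma entirely by combining Lemma~\ref{l.main} for $\mathfrak G_{\mathfrak B}$ with the single pointwise inequality $M_{\mathfrak G_{\mathfrak B},\mu}\leq c_n M_{\mathfrak B,\mu}$, and the halo inclusion $\mathcal H_\beta^k(E)\subset\mathcal K_\gamma^k(E)$ is exactly right. The paper never uses this shortcut; instead it works directly with convex halos throughout.

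However, the case $c_n\gamma\geq 1$ is a genuine gap, not a bookkeeping detail. The pointwise comparison loses a factor of $c_n$ at every halo step, and once $\gamma\geq 1/c_n$ there is no level $\beta<1$ at which the rectangular halo is contained in the convex one. Your proposed fix---chaining through the dyadic pieces $S_j=R_{B_j}$---does not escape this: knowing $\mu(E\cap S_j)/\mu(S_j)>\beta$ only gives $\mu(E\cap B_j)/\mu(B_j)\geq$ something of order $\beta-(1-\rho)$ on the convex copy $B_j\subset S_j$, and when $\gamma$ is close to $1$ this is again below $\gamma$. One really needs to find, inside each rectangle $R$ with $\mu(E\cap R)/\mu(R)>\alpha$, a convex set $\tilde B\in\mathfrak B$ on which the $E$-density exceeds the prescribed level $\gamma$ (or any fixed $\beta>\gamma$). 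A single copy $B\subset R$ fails because it may miss most of $E\cap R$.

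This is precisely the content of Lemmas~\ref{l.weighted} and~\ref{l.homcopies} in the paper: one tiles $R_B$ with disjoint homothetic copies of $B$ whose union $B_N$ carries $\mu$-measure at least $\frac{1-\alpha}{1-\beta}\mu(R_B)$; a pigeonhole argument then forces one copy $\tilde B$ to satisfy $\mu(E\cap\tilde B)/\mu(\tilde B)\geq\beta$, so $\tilde B\subset\mathcal H^1_{\mathfrak B,\beta}(E)$. From there one inflates $\tilde B$ back up to $R$ by a bounded number of doubling steps, each adding one convex halo. The hard analytic input is Lemma~\ref{l.weighted}, a $\mu$-measure estimate for the $\epsilon$-collar of a convex set inside a cube, which is what makes the packing dense enough. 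Your ``chaining'' paragraph is pointing in this direction but does not supply this ingredient; without it the argument does not close for $\gamma$ close to $1$.
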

The proof of this lemma is the most crucial step towards Theorem~\ref{t.convdoubling} and its proof will be explained through several intermediate steps in this section.

We will adopt the definitions and notation of \S~\ref{s.gendoubling}, namely for every $B\in\mathfrak B$ we consider the associated rectangle $R_B$ where $B \subset R_B \subset n^\frac{3}{2}B$ and $\mathfrak G_{\mathfrak B}=\{R_B:B\in\mathfrak B\}$ forms a homothecy invariant basis. Our basic assumption is that $\mu$ is doubling with respect to $\mathfrak B$ with doubling constant $ \Delta_{\mu,\mathfrak B}$. By Lemma~\ref{l.convrect} this implies that $\mu$ is also doubling with respect to $\mathfrak G_{\mathfrak B}$, with doubling constant $\Delta_{\mu,\mathfrak G_{\mathfrak B}}$. All these notions and constants will be fixed throughout this section so we will just write $\Delta_\mu \coloneqq \Delta_{\mu,\mathfrak B}$ and $\delta_\mu \coloneqq \Delta_{\mu,\mathfrak G_{\mathfrak B}}$.

We now fix a convex set $B$ and its associated rectangle $R=R_B\supset B$ and work locally inside $R$. By using a bijection $T:\R^n\to \R^n$ we always have $[0,1]^n=Q=T(R)$ and then we set $K\coloneqq T(B)\subset Q$. By considering the pushforward of $\mu$, that is the measure defined as $\mu_T(E)\coloneqq \mu(T^{-1}E)$ for every measurable set $E$, we readily see that the measure $\mu_T$ is doubling with respect to the basis $T\mathfrak B\coloneqq\{T(B):B\in\mathfrak B\}$ with doubling constant $\Delta_\mu$. Also the measure $\mu_T$ is doubling with respect to the basis $T\mathfrak G$ with doubling constant $\delta_\mu$. Using these invariances we can and will henceforth assume that $R=Q$ and $B$ is a convex set inside $Q$. We will use the same notation $\mu$ for the measure $\mu_T$. This will hopefully create no confusion as all our estimates will only depend on the doubling constants which are the same for both measures.

The following lemma is the heart of the matter when it comes to the proof of Lemma~\ref{l.taubassoc}.
\begin{lemma}\label{l.weighted}
Let $K$ be a convex set contained in the unit cube $Q=[0,1]^n$ and $\mu$ be a doubling measure which is doubling with respect to Euclidean cubes in $\R^n$ with doubling constant $\delta_\mu$. For every $\epsilon>0$ we have the estimate
\begin{align*}
\mu( \{x\in \R^n\setminus K :0\leq \dist(x,K)< \epsilon \}) \leq v_\epsilon \mu(Q),
\end{align*}
where $v_\epsilon\leq 9\delta_\mu ^{4+\lceil \log(34\sqrt n ) \rceil} \big( \log \frac{1}{\epsilon} \big)^{-1}$.
\end{lemma}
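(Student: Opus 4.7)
My plan is a pigeonhole argument over a dyadic shell decomposition around $K$. For $k = 0, 1, \ldots, N$ with $N := \lceil \log(1/\epsilon)\rceil$, define the shells
\[
T_0 := \{x \in \R^n \setminus K : \dist(x, K) < \epsilon\}, \qquad T_k := \{x : 2^{k-1}\epsilon \leq \dist(x, K) < 2^k\epsilon\}\ (k \geq 1).
\]
These are pairwise disjoint and, since each $x \in \bigcup_k T_k$ lies within $2^N\epsilon \leq 2$ of $K \subseteq Q = [0,1]^n$, their union is contained in a concentric cube $\tilde Q$ of side at most $1 + 4 \leq 34\sqrt n$. Iterating the doubling of $\mu$ at most $\lceil \log(34\sqrt n)\rceil$ times yields $\mu(\tilde Q) \leq \delta_\mu^{\lceil \log(34\sqrt n)\rceil}\mu(Q)$, so
\[
\sum_{k=0}^N \mu(T_k) \leq \delta_\mu^{\lceil \log(34\sqrt n)\rceil}\mu(Q).
\]

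The heart of the proof is a uniform lower bound $\mu(T_k) \geq c_n \delta_\mu^{-4}\mu(T_0)$ for every $k = 1, \ldots, N$, where $c_n > 0$ is a constant (ultimately absorbed into the factor $9$ of the statement). Granting this shell-comparison claim, combining with the total-mass bound above yields
\[
N \cdot c_n \delta_\mu^{-4}\mu(T_0) \leq \delta_\mu^{\lceil\log(34\sqrt n)\rceil}\mu(Q),
\]
and since $N \geq \log(1/\epsilon)$, rearranging produces $\mu(T_0) \leq 9\delta_\mu^{4 + \lceil \log(34\sqrt n)\rceil}\mu(Q)/\log(1/\epsilon)$, as required.

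The main obstacle is establishing the shell-comparison uniformly in $k$: a naive dilation of an $\epsilon$-scale covering of $T_0$ up to the scale $2^k \epsilon$ of $T_k$ would accumulate $\delta_\mu^k$ doublings and give a useless bound. The remedy is to exploit the convexity of $K$ through the inclusion $K^{2t} \subseteq p + 2(K^t - p)$, valid for any $p \in K$; this is verified by noting that if $y \in K^{2t}$ and $z \in K$ satisfies $|y - z| < 2t$, then $(z+p)/2 \in K$ by convexity, so $(y+p)/2$ lies within $|y - z|/2 < t$ of $K$, i.e., $(y+p)/2 \in K^t$. Applied with $t = 2^{k-1}\epsilon$ and combined with the John-associated rectangle construction from \S\ref{s.john} to enclose $K^t$ in a cube of comparable mass, this inclusion lets one pass from $\mu(T_0)$ to $\mu(T_k)$ using only a bounded number (specifically four) of applications of cube doubling, independently of $k$, which is precisely the content of the shell-comparison claim.
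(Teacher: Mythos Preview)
Your overall architecture---pigeonhole over disjoint dyadic shells $T_0,\ldots,T_N$ sitting inside a bounded dilate of $Q$---is exactly the skeleton of the paper's proof. The paper also reduces everything to a uniform comparison between the innermost annulus and the outer shells, then sums. So the strategy is right.

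The gap is in your shell-comparison step $\mu(T_k)\geq c_n\delta_\mu^{-4}\mu(T_0)$. The inclusion $K^{2t}\subseteq p+2(K^t-p)$ is correct, but it does not deliver this comparison. Applied with $t=2^{k-1}\epsilon$ it relates $K^{2^k\epsilon}$ to a dilate of $K^{2^{k-1}\epsilon}$, i.e.\ two \emph{adjacent} scales; it says nothing about $T_0$ versus $T_k$. To reach scale $2^k\epsilon$ from scale $\epsilon$ via this inclusion you must iterate $k$ times, and each iteration, once you pass through a John cube to invoke cube-doubling, costs a factor of $\delta_\mu$---precisely the $\delta_\mu^k$ blow-up you said you wanted to avoid. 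Enclosing $K^t$ in its John rectangle does not help: you still only control $\mu(K^{2t})$ by a constant times $\mu(K^t)$, which is one step, not $k$ steps at once. There is no mechanism in your sketch that jumps from $T_0$ directly to $T_k$ in a bounded number of doublings.

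What is actually needed is a geometric device that, for each $x\in T_0$ and each scale $2^{-j}$, produces a set of diameter $\simeq 2^{-j}$ lying in the shell at distance $\simeq 2^{-j}$ whose \emph{bounded} dilate contains $x$. The paper achieves this by taking a supporting hyperplane $H_p$ at a nearby boundary point $p\in\partial K$ and walking out along the normal half-line $\ell_p\subset H_p^{-}$: on $\ell_p$ one has $\dist(z,K)=|z-p|$, so one can pick $z$ with $\dist(z,K)=2^{-j}$ exactly, and the Whitney cube $S$ through $z$ then has $\diam S\simeq 2^{-j}$ and $x\in 34\sqrt n\, S$. This is the missing idea; your convexity inclusion does not substitute for it because dilating about an interior point $p\in K$ does not control $\dist(\cdot,K)$ for the image points (the direction $x-p$ need not be normal to $\partial K$).
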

We immediately get the following corollary.

\begin{corollary} Let $m$ be a positive integer and $\{Q_j\}_j$ denote the dyadic cubes of sidelength $2^{-m}$, contained in $Q$ and disjoint from $K$. Then
\begin{align*}
		\mu(\bigcup_j Q_j)+\mu(K)\geq \xi_m \mu(Q),
\end{align*}
where $\xi_m=1-v_{\sqrt n 2^{-m}}\to 1$ as $m\to +\infty$ and $v_\epsilon$ is as in Lemma~\ref{l.weighted}.
\end{corollary}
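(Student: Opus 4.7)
The proof will be essentially a bookkeeping argument combined with a direct application of Lemma~\ref{l.weighted}. The key idea is that the dyadic cubes of sidelength $2^{-m}$ contained in $Q$ partition $Q$ (modulo boundaries, which are $\mu$-null by Proposition~\ref{p.noboundarymass}(iv)) into three disjoint families: those disjoint from $K$ (the $Q_j$'s), those entirely contained in $K$, and the ``boundary cubes'' that intersect $K$ without being contained in it. The boundary cubes are the only source of discrepancy between $\mu(K)+\mu(\bigcup_j Q_j)$ and $\mu(Q)$, and they live in a thin neighborhood of $\partial K$ which Lemma~\ref{l.weighted} can control.

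The plan is as follows. First, I would enumerate the dyadic subcubes of $Q$ of sidelength $2^{-m}$ and sort them into $\mathcal F_{\operatorname{out}}=\{Q_j\}_j$, $\mathcal F_{\operatorname{in}}$ (those contained in $K$), and $\mathcal F_{\operatorname{bd}}$ (the rest). Since two dyadic cubes of the same sidelength only overlap on a Lebesgue-null set consisting of portions of their common boundary and these are contained in a finite union of affine hyperplanes, which by Proposition~\ref{p.noboundarymass}(iv) have $\mu$-measure zero, one has additivity
\begin{align*}
\mu(Q)=\mu\Bigl(\bigcup\mathcal F_{\operatorname{out}}\Bigr)+\mu\Bigl(\bigcup\mathcal F_{\operatorname{bd}}\Bigr)+\mu\Bigl(\bigcup\mathcal F_{\operatorname{in}}\Bigr).
\end{align*}
Next, using $\bigcup\mathcal F_{\operatorname{in}}\subseteq K$ and $K\cap\bigcup\mathcal F_{\operatorname{out}}=\emptyset$, one decomposes $\mu(K)=\mu(K\cap\bigcup\mathcal F_{\operatorname{in}})+\mu(K\cap\bigcup\mathcal F_{\operatorname{bd}})=\mu(\bigcup\mathcal F_{\operatorname{in}})+\mu(K\cap\bigcup\mathcal F_{\operatorname{bd}})$. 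Combining the last two displays yields the clean identity
\begin{align*}
\mu(K)+\mu\Bigl(\bigcup_j Q_j\Bigr)=\mu(Q)-\mu\Bigl(\bigcup\mathcal F_{\operatorname{bd}}\setminus K\Bigr).
\end{align*}

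So it suffices to show that $\mu\bigl(\bigcup\mathcal F_{\operatorname{bd}}\setminus K\bigr)\le v_{\sqrt n\,2^{-m}}\mu(Q)$. Here I would make the geometric observation that every point $x$ of a boundary cube $Q'\in\mathcal F_{\operatorname{bd}}$ satisfies $\dist(x,K)\le\diam(Q')=\sqrt n\,2^{-m}$, since $Q'$ contains at least one point of $K$. Thus $\bigcup\mathcal F_{\operatorname{bd}}\setminus K\subseteq\{x\in\R^n\setminus K:\dist(x,K)\le\sqrt n\,2^{-m}\}$. The only minor nuisance, and the likeliest place one needs to be careful, is the strict-versus-non-strict inequality in Lemma~\ref{l.weighted}: to compensate, I would apply the lemma with $\epsilon=\sqrt n\,2^{-m}+\delta$ for each $\delta>0$ and use the continuity of the map $\epsilon\mapsto v_\epsilon=9\delta_\mu^{4+\lceil\log(34\sqrt n)\rceil}(\log\tfrac1\epsilon)^{-1}$ at $\epsilon=\sqrt n\,2^{-m}$ to pass to the limit $\delta\to 0^+$ and conclude
\begin{align*}
\mu\Bigl(\bigcup\mathcal F_{\operatorname{bd}}\setminus K\Bigr)\le v_{\sqrt n\,2^{-m}}\,\mu(Q).
\end{align*}

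Combining this with the identity above gives $\mu(K)+\mu(\bigcup_j Q_j)\ge (1-v_{\sqrt n\,2^{-m}})\mu(Q)=\xi_m\mu(Q)$. That $\xi_m\to 1$ as $m\to\infty$ is automatic from the explicit form of $v_\epsilon$: indeed $\sqrt n\,2^{-m}\to 0$, hence $\log(1/\epsilon)\to\infty$, forcing $v_{\sqrt n\,2^{-m}}\to 0$. The entire argument is essentially a decomposition plus an application of the previous lemma, so there is no serious obstacle; the only subtlety is the strict-inequality issue, handled by the limiting argument above.
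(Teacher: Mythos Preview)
Your proof is correct and follows essentially the same approach as the paper. The paper's argument is more compressed: it simply observes that $Q\setminus\bigl(\bigcup_j Q_j\cup K\bigr)\subset\{x\in Q\setminus K:\dist(x,K)<\sqrt n\,2^{-m}\}$ and applies Lemma~\ref{l.weighted} directly, whereas you arrive at the same set via the three-class partition and the identity $\mu(K)+\mu(\bigcup_j Q_j)=\mu(Q)-\mu\bigl(\bigcup\mathcal F_{\operatorname{bd}}\setminus K\bigr)$; but $\bigcup\mathcal F_{\operatorname{bd}}\setminus K$ coincides (up to $\mu$-null sets) with $Q\setminus\bigl(\bigcup_j Q_j\cup K\bigr)$, so the two arguments are the same in content.

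One minor remark: your limiting argument to pass from $\dist(x,K)\le\sqrt n\,2^{-m}$ to strict inequality is unnecessary. Since $K$ is open, if $Q'\in\mathcal F_{\operatorname{bd}}$ contains a point $z\in K$ and $x\in Q'\setminus K$, then $x\neq z$ and a short perturbation of $z$ toward $x$ inside $K$ gives a point strictly closer to $x$ than $\sqrt n\,2^{-m}$; hence $\dist(x,K)<\sqrt n\,2^{-m}$ already. The paper uses this implicitly to get the strict inclusion.
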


\begin{proof} Suppose that $x\in Q\setminus K$ and $\dist(x,K)\geq \sqrt{n}2^{-m}$. Then since the cubes $Q_j$ have diameter less than $\sqrt{n} 2^{-m}$ we have that $x\in Q_j$ for some $j$. Thus
	\begin{align*}
 Q\setminus  \big( \bigcup_j Q_j \cup K\big)\subset	\{x\in Q\setminus K: 0\leq  \dist(x,K)<  \sqrt{n} 2^{-m}\}.
	\end{align*}
Using Lemma~\ref{l.weighted} and the previous inclusion we conclude
\begin{align*}
\mu	\big( \bigcup_j Q_j \big)+\mu(K)\geq (1-v_{\sqrt n 2^{-m}})\mu(Q)
\end{align*}
which is the desired estimate with $\xi_m=1-v_{\sqrt n 2^{-m}}$.
\end{proof}

Some remarks are in order concerning Lemma~\ref{l.weighted} and its corollary above. If $\mu$ is replaced by the Lebesgue measure then Lemma~\ref{l.weighted} appears in \cite{HS}*{Lemma 2} and is one of the main ingredients for the proof of the Lebesgue-measure analogue of Lemma~\ref{l.taubassoc}. In the Lebesgue measure case, Lemma~\ref{l.weighted} is a simple calculation that crucially depends on the fact that the Lebesgue measure of an ``annulus'' around a convex set can be calculated as the sum of the $(n-1)$-dimensional Hausdorff measures of the boundaries of an increasing sequence of convex sets. Since the $(n-1)$-dimensional Hausdorff measure of the boundary of a convex set contained in $Q$ is at most $2n$ this yields the desired estimate. Under the presence of a general doubling measure the proof of such a result is more involved. In order not to divert the attention from the proof of Lemma~\ref{l.taubassoc} we postpone the proof of Lemma~\ref{l.weighted} until \S~\ref{s.proofoflemma}.

Let $E$ be a set in $\R^n$, $\sigma\in\R^n$ and $c>0$. Remember the notations $\tau_\sigma E = \{x+\sigma:\, x\in E\}$ and $\dil_c E=\{cx: \, x\in E\}$. In the following lemma we iterate the construction of Lemma~\ref{l.weighted} in order to get ``many'' disjoint homothetic copies of a convex set $B$ inside its associated rectangle $R_B$, with diameters bounded away from zero and whose union captures a big portion of the measure of $R_B$.

Remember that $\xi_m$ is the constant appearing in Lemma~\ref{l.weighted}, $\rho= \Delta_\mu ^{- \lceil  \frac{3}{2}\log  n \rceil }<1$ is the constant from Lemma~\ref{l.convrect}, and $\delta_\mu $ is the doubling constant of $\mu$ with respect to $\mathfrak G_{\mathfrak B}$.
\begin{lemma}\label{l.homcopies} Let $B\in\mathfrak B$ be a convex set in let $R=R_B\in \mathfrak G_{\mathfrak B}$ be the associated rectangle of $B$ so that $B\subset R$. Let $m$ be a a large positive integer so that $\xi_m>\rho$. For every positive integer $N$ there exists a set $B_N\subset R$ with the following properties:
	\begin{itemize}
	\item[(i)] The set $B_N$ is a finite union of pairwise disjoint homothetic copies of $B$. That is, $B_N=\cup_\alpha B_\alpha $, each $B_\alpha\subset R$ is an homothetic copy of $B$ and the $B_\alpha$'s are pairwise disjoint.
	\item[(ii)] Let $ B_\alpha$ be a homothetic copy of $B$ in $B_N$  and let $R_{ B_\alpha}$ be the associated rectangle of $ B_\alpha$. Then $R_{ B_\alpha}$ is a ``dyadic rectangle'' in $\mathcal D_R$ which is at most $mN$ generations inside $R$. This means that $R_{ B_\alpha} ^{(s)}=R$ for some non-negative integer $s\leq Nm$.
	\item[(iii)] For the $\mu$-measure of $B_N$ we have the estimate
	\begin{align*}
	\mu(B_N)\geq \rho \frac{1-\Psi^{N+1}}{1-\Psi}\mu(R),
	\end{align*}
where $\Psi\coloneqq {\xi_m-\rho} $.
	\end{itemize}
\end{lemma}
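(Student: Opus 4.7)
The plan is to build $B_N$ by a recursive branching construction: at each stage, inside every currently--active rectangle $Q\in\mathcal D_R$ that already hosts a homothetic copy $B_Q$ of $B$ with $R_{B_Q}=Q$, I invoke the corollary of Lemma~\ref{l.weighted} to extract the dyadic sub-rectangles of $Q$ at generation $m$ that are disjoint from $B_Q$, and into each such sub-rectangle $Q'$ I insert the unique homothetic copy $B_{Q'}\subset Q'$ with $R_{B_{Q'}}=Q'$. Because $\mathfrak B$ is homothecy invariant, all elements of $\mathfrak G_{\mathfrak B}$ share a common eccentricity, so the bijection normalising any $Q\in\mathfrak G_{\mathfrak B}$ to $[0,1]^n$ pushes $\mu$ forward to a measure still doubling with respect to Euclidean cubes with the same constant $\delta_\mu$; this is precisely what the corollary of Lemma~\ref{l.weighted} requires and lets me apply it uniformly at every branch.

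To track the mass captured, I would introduce $\beta_N$ as the best constant for which the construction started at an arbitrary $Q\in\mathfrak G_{\mathfrak B}$ yields, after $N$ iterations, a union of placed copies of $\mu$-measure at least $\beta_N\mu(Q)$. The base case $\beta_0=\rho$ is just Lemma~\ref{l.convrect}(iii). For the inductive step, beginning inside $Q$ with $\mu(B_Q)\geq\rho\mu(Q)$ and extracting the sub-rectangles $\{Q_j\}$ guaranteed by the corollary, one has $\sum_j\mu(Q_j)\geq\xi_m\mu(Q)-\mu(B_Q)$, and performing $N$ further iterations inside each $Q_j$ gives
\begin{align*}
\beta_{N+1}\mu(Q) &\geq \mu(B_Q)+\beta_N\sum_j\mu(Q_j)\\
&\geq (1-\beta_N)\mu(B_Q)+\beta_N\xi_m\mu(Q) \geq [\rho+\Psi\beta_N]\mu(Q),
\end{align*}
where the last step uses $\beta_N\leq 1$ (since $\beta_N$ is a proportion of the total mass) and $\Psi=\xi_m-\rho$. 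Iterating the linear recursion $\beta_{N+1}\geq\rho+\Psi\beta_N$ from $\beta_0=\rho$ yields the geometric sum $\beta_N\geq\rho\sum_{k=0}^{N}\Psi^k=\rho(1-\Psi^{N+1})/(1-\Psi)$, which is exactly property~(iii).

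Properties (i) and (ii) fall directly out of the tree structure of the construction. At iteration $k$ the newly placed copies sit inside dyadic sub-rectangles of generation exactly $km$ in $\mathcal D_R$, so after $N$ iterations we never descend past generation $Nm$, giving (ii). For (i), the corollary ensures that each new copy lies inside a sub-rectangle disjoint from its parent copy; sibling copies at any level occupy pairwise disjoint dyadic sub-rectangles of a common ancestor; and copies belonging to separate branches descend from already-disjoint rectangles. Boundary overlaps contribute no mass by Proposition~\ref{p.noboundarymass}(iv), so the finite family of placed copies is pairwise disjoint as required.

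The main obstacle I anticipate is not the induction itself but verifying that Lemma~\ref{l.weighted} and its corollary apply with uniform constants at every scale and in every branch. This reduces to the observation that the homothecy invariance of $\mathfrak B$ forces all rectangles in $\mathfrak G_{\mathfrak B}$ to share a fixed shape, so that the affine normalisation sending $Q\in\mathfrak G_{\mathfrak B}$ to $[0,1]^n$ transforms the $\mathfrak G_{\mathfrak B}$-doubling of $\mu$ into cube-doubling of the pushforward with the \emph{same} constant $\delta_\mu$. Once that uniformity is secured, the recursion above runs cleanly and yields the three asserted properties.
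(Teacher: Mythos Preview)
Your construction and the recursion $\beta_{N+1}\geq\rho+\Psi\beta_N$ are correct and amount to a cleaner packaging of the paper's own argument, which builds the identical self-similar tree of homothetic copies and obtains the same geometric series by spelling out the cases $N=1,2$ explicitly before invoking induction. One correction: it is not true in general that all elements of $\mathfrak G_{\mathfrak B}$ share a common eccentricity (take $\mathfrak B=\mathfrak R$, so that $\mathfrak G_{\mathfrak B}=\mathfrak R$ as well), but your argument only requires---and only uses---that the rectangles appearing \emph{in the construction} are all homothetic to the fixed $R=R_B$, which they are, being dyadic descendants of $R$; with that sentence adjusted, the uniform applicability of the corollary of Lemma~\ref{l.weighted} is secured exactly as you claim.
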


\begin{proof} By the discussion before Lemma~\ref{l.weighted} concerning affine invariance we can assume that $R=Q=[0,1]^n$ and $B=K\subset Q$. We will thus construct the set $K_N=B_N$ as in the statement of the lemma, assuming everything takes place inside the unit cube $Q$.
	
	 Let $\{Q_j\}_j$ denote the pairwise disjoint cubes from Lemma~\ref{l.weighted} which satisfy $\mu(\cup_j Q_j)+\mu(K)\geq \xi_m\mu(Q)$. It is essential to note that $\xi_m$ depends only on the dimension, the doubling constant of $\mu$ and $m$, and that $\xi_m\to 1$ as $m\to +\infty$. Remember that by Lemma~\ref{l.convrect} we have for every $B\in \mathfrak B$ with associated rectangle $R_B$ that $\mu(B)\geq \rho \mu (R_B)$. Since we have reduced everything to the case $K\subset Q$ this means that we have
	\begin{align}\label{e.doubling}
		\mu(V)\geq \rho \mu(Q_V)
	\end{align}
for all the convex sets $V\subset Q$ which are homothetic copies of $K$ and for $Q_V\supset V$ being the associated cube of $V$. Throughout this proof (and for the rest of the paper) we assume that $m$ is sufficiently large, depending upon the dimension, $\delta_\mu$ and $\Delta_\mu$ only, so that $\xi_m-\rho>0$.
	
	 Consider $\{\sigma_j\}_j\subset \R^n$ such that $Q_j\eqqcolon \tau_{\sigma_j} \dil_{2^{-m}}Q$ and set $K_{1,j}\coloneqq \tau_{\sigma_j}\dil_{2^{-m}}K$. Observe that $Q_j$ is the associated cube of the convex set $K_{1,j}$, just like $Q$ is the associated rectangle of $K$. Combining this observation with \eqref{e.doubling} we see that $\mu(K_{1,j})\geq \rho \mu(Q_j)$ and thus
\begin{align*}
	\mu(\bigcup_{j}K_{1,j}) \geq \rho \mu(\bigcup_{j}Q_j).
\end{align*}
By Lemma~\ref{l.weighted} we estimate
\begin{align*}
	\mu(\bigcup_{j}K_{1,j})+\mu(K)& \geq\rho\big(  \xi_m \mu(Q)-\mu(K)\big)+\mu(K)
	\\
	&= \rho \xi_m \mu(Q)+(1-\rho)\mu(K).
\end{align*}
Now $\rho<1$ and $\mu(K)\geq \rho \mu(Q)$ thus
\begin{align*}
		\mu(\bigcup_{j}K_{1,j})+\mu(K)& \geq \rho\xi_m  \mu(Q)+ (1-\rho)\rho \mu(Q)
		\\
		&=\rho  (1+ \xi_m-\rho)\mu(Q).
\end{align*}
We call $\Psi\coloneqq \xi_m-\rho$ since this quantity will appear quite a lot in what follows. Observe that $0<\Psi<1$. Now let $K_1\coloneqq K\cup\bigcup_\ell K_{1,\ell}$ and  $K_{2,j}\coloneqq \tau_{\sigma_j}\dil_{2^{-m}} K_1=K_{1,j}\cup \bigcup_\ell \tau_{\sigma_j}\dil_{2^{-m}} K_{1,\ell}$. The previous estimate reads
\begin{align*}
	\mu(K_1)\geq \rho(1+\Psi)\mu(Q).
\end{align*}
We iterate the estimate of Lemma~\ref{l.weighted} as follows:
\begin{align*}
		\mu(\bigcup_{j}K_{2,j})+\mu(K)& \geq \mu(\bigcup_{j}K_{1,j})+ \sum_j \mu(\bigcup_\ell \tau_{\sigma_j}\dil_{2^{-m}} K_{1,\ell})+\mu(K)
		\\
		&\geq \mu(\bigcup_{j}K_{1,j}) + \rho\sum_j \mu (\bigcup_\ell \tau_{\sigma_j}\dil_{2^{-m}} \tau_{\sigma_\ell} \dil_{2^{-m}} Q)+\mu(K)
		\\
		&=  \mu(\bigcup_{j}K_{1,j}) + \rho\sum_j \sum_\ell \mu (\tau_{\sigma_j}\dil_{2^{-m}} Q_\ell)+\mu(K)
		\\
		&\geq \mu(\bigcup_{j}K_{1,j}) +   \rho \sum_j \big(\xi_m\mu(Q_j)-\mu(K_{1,j})\big)+\mu(K)
		\\
		&= (1- \rho )\mu(\bigcup_{j}K_{1,j})+\rho\xi_m\sum_j \mu(Q_j)+\mu(K)
		\\
		&\geq (1-\rho)\mu(\bigcup_{j}K_{1,j}) + \rho\xi_m \big(\xi_m\mu(Q)-\mu(K)\big)+\mu(K).
\end{align*}
Noting that the coefficient in front of $\mu(K)$ is positive and using $\mu(K)>\rho \mu(Q)$ together with the lower bound for $\mu(\bigcup_j K_{1,j})$ we can conclude
\begin{align*}
		\mu(\bigcup_{j}K_{2,j})+\mu(K)\geq \rho\big( 1+\Psi+\Psi^2 \big)\mu(Q).
\end{align*}
Thus $K_2\coloneqq K\cup\bigcup_\ell K_{2,\ell}$ satisfies
\begin{align*}
	\mu(K_2)\geq \rho(1+\Psi+\Psi^2)\mu(Q)
\end{align*}
We continue inductively. If $K_\nu$ has been defined and satisfies $\mu(K_\nu)\geq \rho(1+\Psi+\cdots+\Psi^\nu)\mu(Q)$ we can set $K_{\nu+1,j}\coloneqq \tau_{\sigma_j}\dil_{2^{-m}}K_\nu$ and $K_{\nu+1}\coloneqq K\cup \bigcup_\ell K_{\nu,\ell} $ and in the same fashion we show that
\begin{align*}
	\mu(K_{\nu+1})\geq \rho(1+\Psi+\cdots+\Psi^\nu+\Psi^{\nu+1})\mu(Q).
\end{align*}
We conclude that for every positive integer $N$ we have $\mu(K_N)\geq \rho \frac{1-\Psi^{N+1}}{1-\Psi}\mu(Q) $.

Going back from the unit cube $Q$ to a general rectangle $R$, if $Q=T(R)$ for a bijection $T:\R^n\to \R^n$ then the desired set $B_N$ is just $B_N\coloneqq T^{-1}K_N$.
\end{proof}
In the following lemma we make appropriate selections of the parameters $m,N$ as these appear in the statement of Lemma~\ref{l.homcopies}.

\begin{lemma}\label{l.mN} Let $\eta\in(0,1)$ and $\alpha\in(\eta,1)$ and consider the parameters $N,m$ and $\Psi$ from Lemma~\ref{l.homcopies}. There exists a choice of $m,N$ so that
	\begin{align*}
	\rho\frac{1-\Psi^{N+1}}{1-\Psi}\geq \frac{1-\alpha}{1-\eta}.
	\end{align*}
\end{lemma}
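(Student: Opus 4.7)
The plan is a simple two-step limit argument, exploiting that as $m$ and $N$ grow, the left-hand side tends to $1$, while the right-hand side is already strictly less than $1$. First I would observe that since $\eta<\alpha$, the right-hand side satisfies $(1-\alpha)/(1-\eta)<1$, so it suffices to make the left-hand side of the required inequality equal at least to any fixed number strictly below $1$.

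For fixed $m$ with $\xi_m>\rho$, the parameter $\Psi=\xi_m-\rho$ lies in $(0,1-\rho)\subset(0,1)$, the quantity $\rho(1-\Psi^{N+1})/(1-\Psi)$ is monotonically increasing in $N$, and it converges to $\rho/(1-\Psi)$ as $N\to\infty$. From the corollary to Lemma~\ref{l.weighted}, one has $\xi_m=1-v_{\sqrt n\,2^{-m}}\to 1$ as $m\to\infty$, so $\Psi\to 1-\rho$ and therefore
\[
\frac{\rho}{1-\Psi}\longrightarrow \frac{\rho}{1-(1-\rho)}=1 \quad\text{as }m\to\infty.
\]

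Thus I would first fix $m$ sufficiently large that $\xi_m>\rho$ and $\rho/(1-\Psi)>(1-\alpha)/(1-\eta)$; both conditions are arrangeable by the previous paragraph, since the target $(1-\alpha)/(1-\eta)$ is strictly less than $1$. With this $m$ (and hence $\Psi$) frozen, I would then choose $N$ large enough to absorb the geometric tail, i.e.\ so that $\rho(1-\Psi^{N+1})/(1-\Psi)\geq (1-\alpha)/(1-\eta)$, which is possible by monotone convergence in $N$. There is no genuine obstacle; the only point to respect is the order of quantifiers, namely that $m$ must be selected first so that $\Psi\in(0,1)$ is fixed before the geometric decay $\Psi^{N+1}\to 0$ is invoked. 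Using the explicit bound on $v_\epsilon$ in Lemma~\ref{l.weighted}, one could even record explicit dependencies of $m$ and $N$ on $\eta$, $\alpha$ and the doubling constants, but this is not needed for the qualitative statement.
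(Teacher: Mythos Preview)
Your proposal is correct and follows essentially the same approach as the paper's proof: first choose $m$ large so that $\rho/(1-\Psi)$ strictly exceeds the target $(1-\alpha)/(1-\eta)$, then choose $N$ large to absorb the geometric tail $\Psi^{N+1}$. The only difference is cosmetic: the paper inserts the explicit intermediate threshold $\tfrac{1}{2}\bigl(1+\tfrac{1-\alpha}{1-\eta}\bigr)$ in order to write down concrete formulas for $m$ and $N$, whereas you invoke the limits directly and note that explicit constants could be extracted if desired.
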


\begin{proof} Remember that $\Psi=\xi_m-\rho$. We begin by fixing $m$ large enough so that $\xi_m>\rho$ and
	\begin{align*}
\frac{\rho}{1-\Psi}=\frac{\rho}{1-(\xi_m-\rho)}>\frac{1+\frac{1-\alpha}{1-\eta}}{2}.
	\end{align*}
For the previous inequality to be true it is enough to define $m$ so that
\begin{align*}
	\xi_m > 1-\rho \frac{1-\frac{1-\alpha}{1-\eta}}{1+\frac{1-\alpha}{1-\eta}}=1-\rho\frac{\alpha-\eta}{2-\alpha-\eta}.
\end{align*}
This is always possible since $0<\frac{\alpha-\eta}{2-\alpha-\eta}<1$ and $0<\rho<1$. With this value of $m$ fixed we now let the positive integer $N$ be large enough so that
\begin{align*}
	\frac{1+\frac{1-\alpha}{1-\eta}}{2}(1-\Psi^{N+1})=\frac{1+\frac{1-\alpha}{1-\eta}}{2}(1-(\xi_m-\rho)^{N+1})>\frac{1-\alpha}{1-\eta}.
\end{align*}
A straightforward calculation shows that it is enough to take
\begin{align*}
	N\coloneqq \bigg\lceil \frac{\log \big( \frac{2-\alpha-\eta}{\alpha-\eta}\big)}{\log\big(\frac{1}{\xi_m-\rho}\big)}\bigg\rceil.
\end{align*}
These values of $m$ and $N$ prove the statement of the lemma.
\end{proof}

We are now ready to give the proof of Lemma~\ref{l.taubassoc}.

\begin{proof}[Proof of Lemma~\ref{l.taubassoc}] Let us assume that $M_{\mathfrak B,\mu}$ satisfies the Tauberian condition \eqref{e.convex} for a fixed level $\gamma\in(0,1)$ and let $\beta\in(\gamma,\alpha)$. As in the proof of Theorem~\ref{t.doubling} any such $\beta$ will work equally well, but a concrete choice is $\beta\coloneqq \sfrac{(\gamma+\alpha)}{2}$. Then the Tauberian condition \eqref{e.convex} implies
	\begin{align}\label{e.taubbeta}
 \nu(\{x\in\R^n:M_{\mathfrak B,\mu}(\ind_E)(x)\geq \beta \})\leq c_{\mathfrak G_{\mathfrak B},\gamma ,\nu} ^\mu \nu(E)
	\end{align}
for any $\mu$-measurable set $E\subset \R^n$. For every $y\in\{x\in \R^n:M_{\mathfrak G_{\mathfrak B},\mu}(\ind_E)(x)>\alpha\}$ there exists a rectangle $R=R_y\in\mathfrak G_{\mathfrak B}$ with $R\ni y$ and
	\begin{align}\label{e.rec}
		\frac{1}{\mu(R)}\int_R \ind_E(y) d\mu(y)>\alpha.
	\end{align}
We will use the basic inclusion
	\begin{align*}
		H_\alpha \coloneqq \{x\in \R^n:M_{\mathfrak G_{\mathfrak B},\mu}(\ind_E)(x)>\alpha\} \subset \bigcup_{y\in H_\alpha} R_y.
	\end{align*}
Denoting
\begin{align*}
H_{\mathfrak B,\beta} ^0 (E)\coloneqq E\quad\text{and}\quad	\mathcal H^k _{\mathcal B,\beta}(E)\coloneqq \{x\in\R^n: M_{\mathfrak B,\mu}(\ind_{\mathcal H ^{k-1} _{\mathfrak B,\beta}(E)})(x)\geq \beta\},\quad k\geq 1,
\end{align*}
we will show that $\cup_{y} R_y \subset \mathcal H_{\mathfrak B,\beta} ^k(E)$ for some positive integer $k$.

To this end let $R=R_y$ be one of these rectangles. Observe that there is some $B\in\mathfrak B$ such that $R=R_B$ is the associated rectangle of the convex set $B$ and $R\supset B$. We now consider the set $B_N$ as in Lemma~\ref{l.homcopies} with the choice of parameters $m,N$ provided by Lemma~\ref{l.mN} applied for $\eta=\beta$. Observe that we have $\mu(B_N)\geq \frac{1-\alpha}{1-\beta}\mu(R_B)$ and $B_N$ is a disjoint union of homothetic copies of $B$ inside $R_B$, with each homothetic copy having associated a rectangle which is dyadic, and at most $Nm$ generations ``inside'' $R_B$. We claim that for at least one of the homothetic copies of $B$ forming $B_N$, say $\tilde B$, we have
\begin{align*}
	\frac{1}{\mu(\tilde B)}\int_{\tilde B}\ind_E(y)d\mu(y)\geq \beta.
\end{align*}
Indeed, if this is not the case then we would have
\begin{align*}
	\mu(E\cap R)&\leq \mu(E\cap B_N)+\mu(E\cap R \setminus B_N)< \beta \mu(B_N)+\mu(R)-\mu(B_N)
	\\
	&=\mu(R)-(1-\beta)\mu(B_N)\leq \mu(R)-(1-\beta)\frac{1-\alpha}{1-\beta}\mu(R)=\alpha\mu(R)
\end{align*}
which contradicts~\eqref{e.rec}. The previous claim just proved immediately implies that
\begin{align}\label{e.1step}
 \tilde B \subset \mathcal H_{\mathcal B,\beta} ^1 (E).
\end{align}

If $R_{\tilde B}$ is the associated rectangle of $\tilde B$ we get by the construction of Lemma~\ref{l.homcopies} that $R_{\tilde B}$ is a dyadic rectangle which is at most $Nm$ generations ``inside'' $R$. Thus $2^{Nm+1}R_{\tilde B}\supset R $. Remembering that $R_{\tilde B} \subset  n^\frac{3}{2}  \tilde B $ we arrive at
\begin{align*}
	 \tilde B  \subset R \subset 2^{Nm+1+\frac{3}{2}\lceil \log n \rceil}   \tilde B .
\end{align*}
From the doubling property of $\mu$ with respect to $ \mathfrak B $ we now get that
\begin{align*}
	\mu( 2^{Nm+1+\frac{3}{2}\lceil \log n \rceil}   \tilde B )\leq \Delta_\mu ^{Nm+1+\lceil \frac{3}{2}\log n \rceil} \mu(  \tilde B ).
\end{align*}

We define a nested sequence of homothetic copies of $\tilde B$ as follows. Let $\tilde B_0\coloneqq \tilde B$. Assuming we have defined $\tilde B_0,\ldots,\tilde B_j$ then we set $\tilde B_{j+1}\coloneqq c_j \tilde B_j$ where $c_j>1$ is chosen so that
\begin{align}\label{e.measureratio}
\frac{\mu(\tilde B_{j+1})}{\mu(\tilde B_j)}=\frac{1}{\beta}>1.
\end{align}
This is possible because of the continuity of the measure $\mu$ proved in (iv) of Proposition~\ref{p.noboundarymass}.
Here remember that $c\tilde B$ denotes dilation with respect to the center of the John ellipsoid of $\tilde B$. It is not hard to see that $ \tilde B\subset c\tilde B$ whenever $c>1$. We define $k=k_{\alpha,\beta,n,\mu}$ to be the smallest positive integer such that
\begin{align*}
 \big(\frac{1}{\beta}\big) ^{k-1} \geq \Delta_\mu ^{Nm+1+ \lceil \frac{3}{2}\log  n \rceil  }.
\end{align*}
Observe that for this it suffices to set
\begin{align*}
 k\coloneqq 1+\bigg\lceil \frac{\log \Delta_\mu}{\log(1/\beta)}\bigg\rceil \big\lceil Nm+1+\frac{3}{2}\log n \big\rceil,
\end{align*}
with the choices of $m,N$ given by Lemma~\ref{l.mN} with $\eta=\beta$. Then we claim that
\begin{align*}
	\tilde B_{k-1}\supseteq 2^{Nm+1+\lceil\frac{3}{2}\log n \rceil} \tilde B.
	\end{align*}
If this is not the case then necessarily $\tilde B_{k-1}\subsetneq  2^{Nm+1+\lceil\frac{3}{2}\log n \rceil} \tilde B$ since both sets in the previous inclusion are of the form $c\tilde B$ and, as already observed, one must contain the other. This would imply
\begin{align*}
\Delta_\mu^{{Nm+1+\lceil\frac{3}{2}\log n \rceil}} \geq  \frac{\mu(2^{Nm+1+\lceil\frac{3}{2}\log n \rceil} \tilde B )}{\mu(\tilde B)} >	\frac{\mu(\tilde B_{k-1})}{\mu(\tilde B)}=\big(\frac{1}{\beta}\big)^{k-1}
\end{align*}
which contradicts the choice of $k$. Now for every $j=1,\ldots,k-1$ the choice in \eqref{e.measureratio} implies that
\begin{align*}
 \tilde B_j \subset \mathcal H  ^1 _{\mathfrak B,\beta}(\tilde B_{j-1}).
\end{align*}
Iterating the previous inclusion we get
\begin{align*}
 R\subset2^{Nm+1+\lceil\frac{3}{2}\log n \rceil} \tilde B\subset \tilde B_{k-1} \subset  \mathcal H_{\mathfrak B,\beta} ^{k-1}( \tilde B)\subset\mathcal H_{\mathfrak B,\beta} ^{k}( E)
\end{align*}
by \eqref{e.1step}.

Remembering that $H_\alpha=\{x\in \R^n:M_{\mathfrak G_{\mathfrak B},\mu}(\ind_E)(x)>\alpha\}\subset \bigcup_{y\in H_\alpha} R_y$, the previous inclusion and \eqref{e.taubbeta} imply
\begin{align*}
	\nu(\{x\in \R^n:M_{\mathfrak G_{\mathfrak B},\mu}(\ind_E)(x)>\alpha\})\leq \nu(\bigcup_y R_y)\leq \nu(\mathcal H_{\mathfrak B,\beta} ^{k}( E))\leq  [c_{\mathfrak B,\gamma,\nu} ^\mu ]^k \nu(E)
\end{align*}
which is the Tauberian condition for $M_{\mathfrak G_{\mathfrak B},\mu}$ with constant $c_{\mathfrak G_{\mathfrak B},\alpha ,\nu} ^\mu\leq [c_{\mathfrak B,\gamma,\nu} ^\mu ]^k$.
\end{proof}

\subsection{The proof of Theorem~\ref{t.convdoubling}} It is now routine to complete the proof of Theorem~\ref{t.convdoubling}. Indeed, suppose that $M_{\mathfrak B,\mu}$ satisfies \eqref{e.convex} for some fixed level $\gamma\in(0,1)$. Then Lemma~\ref{l.taubassoc} implies that the associated operator $M_{\mathfrak G_{\mathfrak B},\mu}$ satisfies \eqref{e.taubassoc}. Since $\mathfrak G_{\mathfrak B}$ is an homothecy invariant basis of rectangles and $\mu$ is doubling with respect to that basis, Theorem~\ref{t.doubling} implies that $M_{\mathfrak G_{\mathfrak B},\mu}$ is bounded on $L^p(\nu)$ for $p>p_0$ with $p_0$ depending on $\mu, n, \gamma$ and the constant $c_{\mathfrak B,\gamma,\nu} ^\mu,$ in the Tauberian condition \eqref{e.convex}. By Lemma~\ref{l.convrect} the operators $M_{\mathfrak G_{\mathfrak B},\mu}$ and $M_{\mathfrak B,\mu}$ are pointwise comparable. We conclude that $M_{\mathfrak B,\mu}$ is bounded on $L^p(\nu)$ for $p>p_0$.


\subsection{The proof of Lemma~\ref{l.weighted}\label{s.proofoflemma}} This section is dedicated to the proof of Lemma~\ref{l.weighted}. Remember that $Q=[0,1]^n$ is the unit cube in $\R^n$ and $K\subset Q$ is a convex set and our purpose is to estimate the $\mu$-measure of the ``annulus'' $\{x\in \R^n\setminus K : 0\leq \dist(x,K) < \epsilon\}$. The estimate of the lemma is only interesting when $\epsilon$ is small so it is without loss of generality to assume that $0<\epsilon<\frac{1}{2^8}$. We consider the larger cube $L\coloneqq 16 Q$ which is the cube with the same center as $Q$ and and sidelength equal to $16$. Now let us fix a positive integer $k\geq 8$ such that $2^{-k-1}\leq \epsilon < 2^{-k}$. It  obviously suffices to estimate the measure of the ``annulus''
\begin{align*}
 A_k\coloneqq \{x\in \R^n\setminus K  : 0\leq \dist(x,K) < 2^{-k}\}.
\end{align*}
Let $x\in A_k$ and consider $p=p_x\in \partial K$ such that $|p-x|\leq 2^{-k}$. Let $H_p$ be a supporting hyperplane through $p$ and $H_p ^-$ be the half space defined by $H_p$ and such that $H_p ^- \cap K=\emptyset$. We now consider the half line $\ell_p$ which is emanating from the point $p$, is perpendicular to $H_p$ and is contained in $H_p ^-$, and suppose that $\ell_p$ meets $\partial L$ at some point $b_p$. Since $K\cap H_p ^-=\emptyset$ and $\ell_p$ is perpendicular to $H_p$, we have for $z\in \ell_p$:
\begin{align}\label{e.distances}
\dist(z, K)= \dist(z,H_p)=|z-p|,\quad z\in\ell_p.
\end{align}
\begin{figure}[htb]
\centering
 \def\svgwidth{300pt}
\begingroup%
  \makeatletter%
  \providecommand\color[2][]{%
    \errmessage{(Inkscape) Color is used for the text in Inkscape, but the package 'color.sty' is not loaded}%
    \renewcommand\color[2][]{}%
  }%
  \providecommand\transparent[1]{%
    \errmessage{(Inkscape) Transparency is used (non-zero) for the text in Inkscape, but the package 'transparent.sty' is not loaded}%
    \renewcommand\transparent[1]{}%
  }%
  \providecommand\rotatebox[2]{#2}%
  \ifx\svgwidth\undefined%
    \setlength{\unitlength}{517.46641938bp}%
    \ifx\svgscale\undefined%
      \relax%
    \else%
      \setlength{\unitlength}{\unitlength * \real{\svgscale}}%
    \fi%
  \else%
    \setlength{\unitlength}{\svgwidth}%
  \fi%
  \global\let\svgwidth\undefined%
  \global\let\svgscale\undefined%
  \makeatother%
  \begin{picture}(1,0.88310078)%
    \put(0,0){\includegraphics[width=\unitlength]{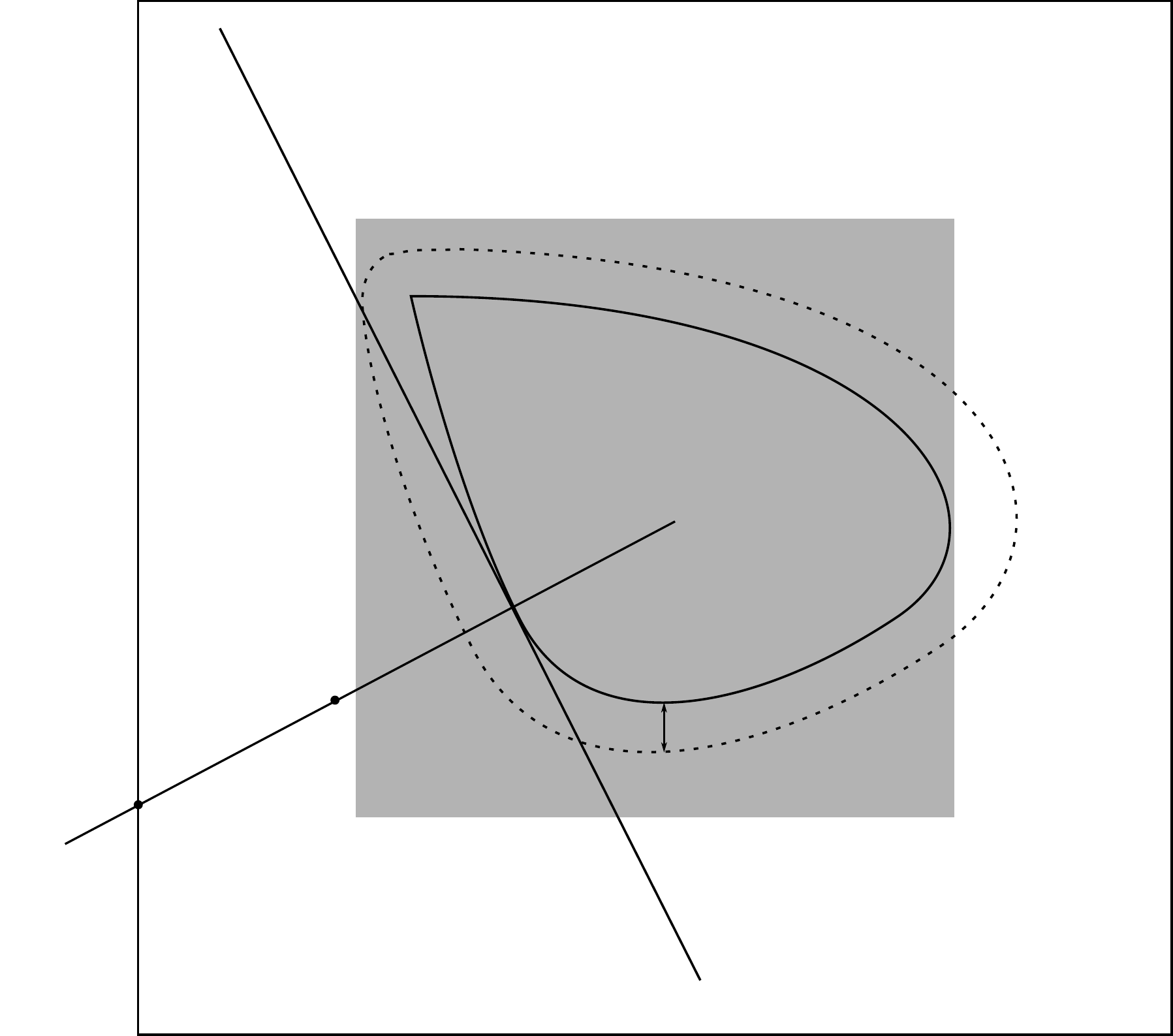}}%
    \put(0.02307484,0.19929764){\color[rgb]{0,0,0}\makebox(0,0)[lt]{\begin{minipage}{0.11369102\unitlength}\raggedright \end{minipage}}}%
    \put(0.12714143,0.15862914){\color[rgb]{0,0,0}\makebox(0,0)[lb]{\smash{$b_p$}}}%
    \put(0.45180018,0.35342439){\color[rgb]{0,0,0}\makebox(0,0)[lb]{\smash{$p$}}}%
    \put(0.74863103,0.195733){\color[rgb]{0,0,0}\makebox(0,0)[lb]{\smash{$Q$}}}%
    \put(0.89395447,0.03494962){\color[rgb]{0,0,0}\makebox(0,0)[lb]{\smash{$L$}}}%
    \put(-0.00272207,0.15244517){\color[rgb]{0,0,0}\makebox(0,0)[lb]{\smash{$\ell_p$}}}%
    \put(0.236907,0.29003863){\color[rgb]{0,0,0}\makebox(0,0)[lb]{\smash{$z_p$}}}%
    \put(0.72080314,0.39516623){\color[rgb]{0,0,0}\makebox(0,0)[lb]{\smash{$K$}}}%
    \put(0.56929572,0.24984279){\color[rgb]{0,0,0}\makebox(0,0)[lb]{\smash{$2^{-k}$}}}%
    \put(0.58166368,0.0133057){\color[rgb]{0,0,0}\makebox(0,0)[lb]{\smash{$H_p$}}}%
    \put(0.46880611,0.09988137){\color[rgb]{0,0,0}\makebox(0,0)[lb]{\smash{$H_p ^-$}}}%
    \put(0.59866961,0.09833537){\color[rgb]{0,0,0}\makebox(0,0)[lb]{\smash{$H_p ^+$}}}%
  \end{picture}%
\endgroup%

\caption[A figure for the proof of Lemma~\ref{l.weighted}]{A figure for the proof of Lemma~\ref{l.weighted}}\label{f.proofofLemma}
\end{figure}
Thus we can conclude
\begin{align*}
 \{\dist(z, K): z\in \ell_p\}\supset [0,|b_p-p|]\supset [0, 1]
\end{align*}
since $|b_p-p|\geq   8-1/2 > 1$. Now let $2^{-j} \in [2^{-k},1]$ for some $0 \leq j\leq k$. By the continuity of the distance function there exists $z_p \in \ell_p$ such that $\dist(z_p,  K)=2^{-j}$. Let $\{S_j\}_j$ denote the Whitney cubes associated to $K$ and remember that these cubes satisfy
\begin{align*}
\bar{K}^\mathsf{c} &= \cup_j S_j,
\\
\mathring {S_j} \cap \mathring{S_{j'}} &=\emptyset\quad \text{if}\quad j\neq j',
\\
\diam  S_j  \leq \dist&(S_k , K)\leq 4\diam S_j.
\end{align*}
Here $\mathring{A}$ denotes the interior of a set $A$. See for example \cite{Stein}*{\S VI.1} for the details of the construction of the Whitney cubes. Since the Whitney cubes cover $\bar{K} ^\mathsf{c}$ and $\dist(z_p, K)>0$, there is a Whitney cube $S_p=S_{j_p}$ such that $z_p\in S_p$. We then have that
\begin{align*}
 \diam S_p \leq \dist (S_p, K)\leq \dist(z_p,  K)
\end{align*}
and on the other hand
\begin{align*}
\diam S_p \geq \frac{1}{4} \dist(S_p,  K) \geq\frac{1}{4}(\dist(z_p,K)-\diam S_p)
\end{align*}
so that $\diam S_p\geq \frac{1}{5} \dist(z_p,K) >\frac{1}{8}\dist(z_p,K)$. Thus the Whitney cube $S_p$ satisfies
\begin{align*}
  \diam S_p \leq \dist(z_p, K)=2^{-j}\leq 8 \diam S_p.
\end{align*}
By \eqref{e.distances} we also get that $\diam S_p > \frac{1}{8}\dist(z_p, K) = \frac{1}{8}| z_p-p|$. Remember that $x\in A_k$ satisfies $|x-p|\leq 2^{-k}\leq 2^{-j}$. An easy calculation now verifies that $x\in 34 \sqrt{n}S_p$, where we remember that the dilation is taken with respect to the center of $S_p$.

We have actually shown that for every $x\in A_k$ and every $2^{-j}\in [2^{-k},1]$, there exists a Whitney cube $S_x$ such that
\begin{align*}
 x\in  34\sqrt n   S_x \quad \text{and}\quad  \frac{1}{8} 2^{-j} <\diam S_x\leq 2^{-j}
\end{align*}
Let $\mathcal C_j$ denote the Whitney cubes such that $\frac{1}{8} 2^{-j} < \diam S \leq 2^{-j}$. Observe that for different $j$'s the collections $\mathcal C_{4j}$ are disjoint. We can write for every positive integer 	$j\in [0,k/4)$
\begin{align*}
 A_k \subset \bigcup_{S\in \mathcal C_{4j}}  34 \sqrt n  S,
\end{align*}
and thus
\begin{align*}
 \mu(A_k) \leq \delta_\mu ^{\lceil \log  34\sqrt n  \rceil} \mu(\bigcup_{S\in \mathcal C_{4j}}  S).
\end{align*}
Summing in $j\in[0,k/4)$ yields
\begin{align*}
\lfloor k/4 \rfloor \mu(A_k)\leq  \delta_\mu ^{\lceil \log  34\sqrt n  \rceil} \sum_{j=0} ^{\lfloor k/4 \rfloor} \mu(\bigcup_{S\in \mathcal C_{4j}}  S),
\end{align*}
with $\lfloor x \rfloor $ denoting the largest integer less or equal to $x$. Now all the Whitney cubes $S$ that appear on the right hand side of the last display satisfy $\diam(S)\leq 1$ and $\dist(S,K)\leq 4$, and thus they are all contained in $11Q\subset L$. Since the families $\mathcal C_{4j}$ are pairwise disjoint, and every family $\mathcal C_{4j}$ consists of pairwise $\mu$-a.e. disjoint cubes, the previous estimate implies
\begin{align*}
\mu(A_k)\leq \frac{8}{k} \delta_\mu ^{\lceil \log  (34\sqrt n)   \rceil} \mu(16Q)\leq \frac{8}{k}\delta_\mu ^{4+\lceil \log (34\sqrt n   ) \rceil}  \mu(Q).
\end{align*}
Since $\frac{1}{k}\leq \frac{9}{8}\frac{1}{\log\frac{1}{\epsilon}}$ this completes the proof of Lemma~\ref{l.weighted}.

\begin{bibsection}
\begin{biblist}

\bib{BaKu}{article}{
   author={Bagby, Richard J.}*{inverted={yes}},
   author={Kurtz, Douglas S.}*{inverted={yes}},
   title={$L({\rm log}\,L)$ spaces and weights for the strong maximal
   function},
   journal={J. Analyse Math.},
   volume={44},
   date={1984/85},
   pages={21--31},
   issn={0021-7670},
   review={\MR{801285 (87c:42018)}},
}

\bib{Ball}{article}{
   author={Ball, Keith}*{inverted={yes}},
   title={An elementary introduction to modern convex geometry},
   conference={
      title={Flavors of geometry},
   },
   book={
      series={Math. Sci. Res. Inst. Publ.},
      volume={31},
      publisher={Cambridge Univ. Press},
      place={Cambridge},
   },
   date={1997},
   pages={1--58},
   review={\MR{1491097 (99f:52002)}},
}

\bib{Buckley}{article}{
   author={Buckley, Stephen M.}*{inverted={yes}},
   title={Estimates for operator norms on weighted spaces and reverse Jensen
   inequalities},
   journal={Trans. Amer. Math. Soc.},
   volume={340},
   date={1993},
   number={1},
   pages={253--272},
   issn={0002-9947},
   review={\MR{1124164 (94a:42011)}},
   doi={10.2307/2154555},
}

\bib{BF}{article}{
author = {Busemann, H.}*{inverted={yes}},
author = {Feller, W.}*{inverted={yes}},
journal = {Fundamenta Mathematicae},
language = {ger},
number = {1},
pages = {226-256},
publisher = {Institute of Mathematics Polish Academy of Sciences},
title = {Zur Differentiation der Lebesgueschen Integrale},
url = {http://eudml.org/doc/212688},
volume = {22},
year = {1934},
}	
	
\bib{CaFa}{article}{
   author={Capri, O. N.}*{inverted={yes}},
   author={Fava, N. A.}*{inverted={yes}},
   title={Strong differentiability with respect to product measures},
   journal={Studia Math.},
   volume={78},
   date={1984},
   number={2},
   pages={173--178},
   issn={0039-3223},
   review={\MR{766713 (86m:26010)}},
}

\bib{CHS}{article}{
   author={Carbery, Anthony}*{inverted={yes}},
   author={Hern{\'a}ndez, Eugenio}*{inverted={yes}},
   author={Soria, Fernando}*{inverted={yes}},
   title={Estimates for the Kakeya maximal operator on radial functions in
   ${\bf R}^n$},
   conference={
      title={Harmonic analysis},
      address={Sendai},
      date={1990},
   },
   book={
      series={ICM-90 Satell. Conf. Proc.},
      publisher={Springer},
      place={Tokyo},
   },
   date={1991},
   pages={41--50},
   review={\MR{1261427 (94m:42039)}},
}

\bib{CoFe}{article}{
   author={Coifman, R. R.}*{inverted={yes}},
   author={Fefferman, C.}*{inverted={yes}},
   title={Weighted norm inequalities for maximal functions and singular
   integrals},
   journal={Studia Math.},
   volume={51},
   date={1974},
   pages={241--250},
   issn={0039-3223},
   review={\MR{0358205 (50 \#10670)}},
}

\bib{Cor}{article}{
   author={Cordoba, Antonio}*{inverted={yes}},
   title={On the Vitali covering properties of a differentiation basis},
   journal={Studia Math.},
   volume={57},
   date={1976},
   number={1},
   pages={91--95},
   issn={0039-3223},
   review={\MR{0419714 (54 \#7732)}},
}

\bib{CF}{article}{
   author={Cordoba, A.}*{inverted={yes}},
   author={Fefferman, R.}*{inverted={yes}},
   title={A geometric proof of the strong maximal theorem},
   journal={Ann. of Math. (2)},
   volume={102},
   date={1975},
   number={1},
   pages={95--100},
   issn={0003-486X},
   review={\MR{0379785 (52 \#690)}},
}

\bib{DN}{article}{
   author={Duoandikoetxea, Javier}*{inverted={yes}},
   author={Naibo, Virginia}*{inverted={yes}},
   title={The universal maximal operator on special classes of functions},
   journal={Indiana Univ. Math. J.},
   volume={54},
   date={2005},
   number={5},
   pages={1351--1369},
   issn={0022-2518},
   review={\MR{2177104 (2006h:42019)}},
   doi={10.1512/iumj.2005.54.2570},
}

\bib{DuoMO}{article}{
   author={Duoandikoetxea, Javier}*{inverted={yes}},
   author={Mart\'in-Reyes, Francisco J.}*{inverted={yes}},
   author={Ombrosi, Sheldy}*{inverted={yes}},
   title={Calder\'on weights as Muckenhoupt weights},
   journal={Indiana Univ. Math. J.},
   volume={},
   date={2013},
   number={},
   pages={},
note={to appear}
}

\bib{F}{article}{
   author={Fefferman, R.}*{inverted={yes}},
   title={Strong differentiation with respect to measures},
   journal={Amer. J. Math.},
   volume={103},
   date={1981},
   number={1},
   pages={33--40},
   issn={0002-9327},
   review={\MR{601461 (83g:42009)}},
}

\bib{F2}{article}{
   author={Fefferman, R.}*{inverted={yes}},
   title={Some weighted norm inequalities for C\'ordoba's maximal function},
   journal={Amer. J. Math.},
   volume={106},
   date={1984},
   number={5},
   pages={1261--1264},
   issn={0002-9327},
   review={\MR{761586 (86a:42024)}},
   doi={10.2307/2374280},
}

\bib{Fef}{article}{
   author={Fefferman, Robert}*{inverted={yes}},
   title={Multiparameter Fourier analysis},
   conference={
      title={Beijing lectures in harmonic analysis},
      address={Beijing},
      date={1984},
 },
   book={
      series={Ann. of Math. Stud.},
      volume={112},
      publisher={Princeton Univ. Press},
      place={Princeton, NJ},
   },
   date={1986},
   pages={47--130},
   review={\MR{864371 (89a:42001)}},
}

\bib{PiFe}{article}{
   author={Fefferman, R.}*{inverted={yes}},
   author={Pipher, J.}*{inverted={yes}},
   title={Multiparameter operators and sharp weighted inequalities},
   journal={Amer. J. Math.},
   volume={119},
   date={1997},
   number={2},
   pages={337--369},
   issn={0002-9327},
   review={\MR{1439553 (98b:42027)}},
}

\bib{GaRu}{book}{
   author={Garc{\'{\i}}a-Cuerva, Jos{\'e}}*{inverted={yes}},
   author={Rubio de Francia, Jos{\'e} L.}*{inverted={yes}},
   title={Weighted norm inequalities and related topics},
   series={North-Holland Mathematics Studies},
   volume={116},
   note={Notas de Matem\'atica [Mathematical Notes], 104},
   publisher={North-Holland Publishing Co.},
   place={Amsterdam},
   date={1985},
   pages={x+604},
   isbn={0-444-87804-1},
   review={\MR{807149 (87d:42023)}},
}

\bib{Gog}{article}{
   author={Gogatishvili, A.}*{inverted={yes}},
   title={Weak type weighted inequalities for maximal functions with respect
   to the general basis},
   language={English, with Russian and Georgian summaries},
   journal={Soobshch. Akad. Nauk Gruzii},
   volume={145},
   date={1992},
   number={2},
   pages={249--252},
   issn={0132-1447},
   review={\MR{1248367 (94m:42040)}},
}

\bib{GLPT}{article}{
   author={Grafakos, Loukas}*{inverted={yes}},
   author={Liu, Liguang}*{inverted={yes}},
   author={P{\'e}rez, Carlos}*{inverted={yes}},
   author={Torres, Rodolfo H.}*{inverted={yes}},
   title={The multilinear strong maximal function},
   journal={J. Geom. Anal.},
   volume={21},
   date={2011},
   number={1},
   pages={118--149},
   issn={1050-6926},
   review={\MR{2755679 (2011m:42031)}},
}

\bib{Guzdif}{article}{
   author={de Guzm{\'a}n, Miguel}*{inverted={yes}},
   title={Differentiation of integrals in ${\bf R}^{n}$},
   conference={
      title={Measure theory},
      address={Proc. Conf., Oberwolfach},
      date={1975},
   },
   book={
      publisher={Springer},
      place={Berlin},
   },
   date={1976},
   pages={181--185. Lecture Notes in Math., Vol. 541},
   review={\MR{0476978 (57 \#16523)}},
}

\bib{Guz}{book}{
   author={de Guzm{\'a}n, Miguel}*{inverted={yes}},
   title={Real variable methods in Fourier analysis},
   series={North-Holland Mathematics Studies},
   volume={46},
   note={Notas de Matem\'atica [Mathematical Notes], 75},
   publisher={North-Holland Publishing Co.},
   place={Amsterdam},
   date={1981},
   pages={xiii+392},
   isbn={0-444-86124-6},
   review={\MR{596037 (83j:42019)}},
}

\bib{HS}{article}{
   author={Hagelstein, Paul}*{inverted={yes}},
   author={Stokolos, Alexander}*{inverted={yes}},
   title={Tauberian conditions for geometric maximal operators},
   journal={Trans. Amer. Math. Soc.},
   volume={361},
   date={2009},
   number={6},
   pages={3031--3040},
   issn={0002-9947},
   review={\MR{2485416 (2010b:42023)}},
}

\bib{HMW}{article}{
   author={Hunt, Richard}*{inverted={yes}},
   author={Muckenhoupt, Benjamin}*{inverted={yes}},
   author={Wheeden, Richard}*{inverted={yes}},
   title={Weighted norm inequalities for the conjugate function and Hilbert
   transform},
   journal={Trans. Amer. Math. Soc.},
   volume={176},
   date={1973},
   pages={227--251},
   issn={0002-9947},
   review={\MR{0312139 (47 \#701)}},
}

\bib{Hyt}{article}{
   author={Hyt{\"o}nen, Tuomas P.}*{inverted={yes}},
   title={The sharp weighted bound for general Calder\'on-Zygmund operators},
   journal={Ann. of Math. (2)},
   volume={175},
   date={2012},
   number={3},
   pages={1473--1506},
   issn={0003-486X},
   review={\MR{2912709}},
   doi={10.4007/annals.2012.175.3.9},
}
\bib{HLP}{article}{
   author={Hyt{\"o}nen, Tuomas P.}*{inverted={yes}},
   author={Lacey, Michael T.}*{inverted={yes}},
   author={P{\'e}rez, Carlos}*{inverted={yes}},
   title={Sharp weighted bounds for the $q$-variation of singular integrals},
   journal={Bull. Lond. Math. Soc.},
   volume={45},
   date={2013},
   number={3},
   pages={529--540},
   issn={0024-6093},
   review={\MR{3065022}},
   doi={10.1112/blms/bds114},
}

\bib{Inf}{article}{
   author={Infante, Adri{\'a}n}*{inverted={yes}},
   title={A remark on the maximal operator for radial measures},
   journal={Proc. Amer. Math. Soc.},
   volume={139},
   date={2011},
   number={8},
   pages={2899--2902},
   issn={0002-9939},
   review={\MR{2801630 (2012d:42037)}},
   doi={10.1090/S0002-9939-2011-10727-5},
}

\bib{J}{article}{
   author={Jawerth, Bj{\"o}rn}*{inverted={yes}},
   title={Weighted inequalities for maximal operators: linearization,
   localization and factorization},
   journal={Amer. J. Math.},
   volume={108},
   date={1986},
   number={2},
   pages={361--414},
   issn={0002-9327},
   review={\MR{833361 (87f:42048)}},
}

\bib{JT}{article}{
   author={Jawerth, Bj{\"o}rn}*{inverted={yes}},
   author={Torchinsky, Alberto}*{inverted={yes}},
   title={The strong maximal function with respect to measures},
   journal={Studia Math.},
   volume={80},
   date={1984},
   number={3},
   pages={261--285},
   issn={0039-3223},
   review={\MR{783994 (87b:42024)}},
}

\bib{JMZ}{article}{
   author={Jessen, B.}*{inverted={yes}},
   author={Marcinkiewicz, J.}*{inverted={yes}},
   author={Zygmund, A.}*{inverted={yes}},
   title={Note on the differentiability of multiple integrals},
   journal={Fund. Math.},
   volume={25},
   date={1935},
   number={},
   pages={217--234},
}

\bib{John}{article}{
   author={John, Fritz}*{inverted={yes}},
   title={Extremum problems with inequalities as subsidiary conditions},
   conference={
      title={Studies and Essays Presented to R. Courant on his 60th
      Birthday, January 8, 1948},
   },
   book={
      publisher={Interscience Publishers, Inc., New York, N. Y.},
   },
   date={1948},
   pages={187--204},
   review={\MR{0030135 (10,719b)}},
}

\bib{Jou}{book}{
   author={Journ{\'e}, Jean-Lin}*{inverted={yes}},
   title={Calder\'on-Zygmund operators, pseudodifferential operators and the
   Cauchy integral of Calder\'on},
   series={Lecture Notes in Mathematics},
   volume={994},
   publisher={Springer-Verlag},
   place={Berlin},
   date={1983},
   pages={vi+128},
   isbn={3-540-12313-X},
   review={\MR{706075 (85i:42021)}},
}

\bib{KK}{book}{
   author={Kokilashvili, Vakhtang}*{inverted={yes}},
   author={Krbec, Miroslav}*{inverted={yes}},
   title={Weighted inequalities in Lorentz and Orlicz spaces},
   publisher={World Scientific Publishing Co. Inc.},
   place={River Edge, NJ},
   date={1991},
   pages={xii+233},
   isbn={981-02-0612-7},
   review={\MR{1156767 (93g:42013)}},
   doi={10.1142/9789814360302},
}

\bib{lacey}{article}{
	Author = {Lacey, Michael T.}*{inverted={yes}},
	Eprint = {1301.4663},
	Title = {On the Two Weight Hilbert Transform Inequality},
	Url = {http://arxiv.org/abs/1301.4663},
	Year = {2013}}

\bib{LSSU}{article}{
Author = {Lacey, Michael T.}*{inverted={yes}},
Author = {Sawyer, Eric T.}*{inverted={yes}},
Author = {Shen, Chun-Yen}*{inverted={yes}},
author = {Uriarte-Tuero, Ignacio}*{inverted={yes}},
Eprint = {1201.4319},
Title = {Two Weight Inequality for the Hilbert Transform: A Real Variable Characterization},
Year = {2012}
}

	\bib{lerner}{article}{
	   Author={Lerner, Andrei K.}*{inverted={yes}},
	   title={An elementary approach to several results on the Hardy-Littlewood
	   maximal operator},
	   journal={Proc. Amer. Math. Soc.},
	   volume={136},
	   date={2008},
	   number={8},
	   pages={2829--2833},
	   issn={0002-9939},
	   review={\MR{2399047 (2009c:42047)}},
	}
	
\bib{lernera2}{article}{
   author={Lerner, Andrei K.}*{inverted={yes}},
   title={A simple proof of the $A_2$ conjecture},
   journal={Int. Math. Res. Not. IMRN},
   date={2013},
   number={14},
   pages={3159--3170},
   issn={1073-7928},
   review={\MR{3085756}},
}
\bib{Lin}{book}{
   author={Lin, Kai-Ching}*{inverted={yes}},
   title={Harmonic Analysis on the Bidisc},
   note={Thesis (Ph.D.)--University of California, Los Angeles},
   publisher={ProQuest LLC, Ann Arbor, MI},
   date={1984},
   pages={56},
   review={\MR{2633524}},
}		
				
\bib{LL}{article}{
   author={Liu, Liguang}*{inverted={yes}},
   author={Luque, Teresa}*{inverted={yes}},
   title={A $B_p$ condition for the strong maximal function},
   note={To appear in Trans. Amer. Math. Soc.},
}

\bib{LOSH}{article}{
   author={Long, Rui Lin}*{inverted={yes}},
   author={Shen, Zhong Wei}*{inverted={yes}},
   title={A note on a covering lemma of A. Cordoba and R. Fefferman},
   note={A Chinese summary appears in Chinese Ann.\ Math.\ Ser.\ A {\bf 9}
   (1988), no.\ 4, 506},
   journal={Chinese Ann. Math. Ser. B},
   volume={9},
   date={1988},
   number={3},
   pages={283--291},
   issn={0252-9599},
   review={\MR{968464 (91b:42037)}},
}

\bib{LP}{article}{
   author={Luque, Teresa}*{inverted={yes}},
   author={Parissis, Ioannis}*{inverted={yes}},
   title={The endpoint Fefferman--Stein inequality for the strong maximal
   function},
   journal={J. Funct. Anal.},
   volume={266},
   date={2014},
   number={1},
   pages={199--212},
   issn={0022-1236},
   review={\MR{3121727}},
}

\bib{Mitsis}{article}{
   author={Mitsis, Themis}*{inverted={yes}},
   title={The weighted weak type inequality for the strong maximal function},
   journal={J. Fourier Anal. Appl.},
   volume={12},
   date={2006},
   number={6},
   pages={645--652},
   issn={1069-5869},
   review={\MR{2275389 (2007i:42016)}},
   doi={10.1007/s00041-005-5060-3},
}

\bib{Muck}{article}{
   author={Muckenhoupt, Benjamin}*{inverted={yes}},
   title={Weighted norm inequalities for the Hardy maximal function},
   journal={Trans. Amer. Math. Soc.},
   volume={165},
   date={1972},
   pages={207--226},
   issn={0002-9947},
   review={\MR{0293384 (45 \#2461)}},
}

\bib{Per1}{article}{
   author={P{\'e}rez, C.}*{inverted={yes}},
   title={Weighted norm inequalities for general maximal operators},
   note={Conference on Mathematical Analysis (El Escorial, 1989)},
   journal={Publ. Mat.},
   volume={35},
   date={1991},
   number={1},
   pages={169--186},
   issn={0214-1493},
   review={\MR{1103614 (92b:42025)}},}

\bib{Per}{article}{
   author={P{\'e}rez, C.}*{inverted={yes}},
   title={A remark on weighted inequalities for general maximal operators},
   journal={Proc. Amer. Math. Soc.},
   volume={119},
   date={1993},
   number={4},
   pages={1121--1126},
   issn={0002-9939},
   review={\MR{1107275 (94a:42016)}},
}

\bib{Peter}{article}{
   author={Petermichl, S.}*{inverted={yes}},
   title={The sharp bound for the Hilbert transform on weighted Lebesgue
   spaces in terms of the classical $A_p$ characteristic},
   journal={Amer. J. Math.},
   volume={129},
   date={2007},
   number={5},
   pages={1355--1375},
   issn={0002-9327},
   review={\MR{2354322 (2008k:42066)}},
   doi={10.1353/ajm.2007.0036},
}

\bib{PWX}{article}{
   author={Pipher, Jill}*{inverted={yes}},
   author={Ward, Lesley A.}*{inverted={yes}},
   author={Xiao, Xiao}*{inverted={yes}},
   title={Geometric-arithmetic averaging of dyadic weights},
   journal={Rev. Mat. Iberoam.},
   volume={27},
   date={2011},
   number={3},
   pages={953--976},
   issn={0213-2230},
   review={\MR{2895340 (2012k:42048)}},
   doi={10.4171/RMI/659},
}

\bib{RS}{article}{
   author={Ricci, F.}*{inverted={yes}},
   author={Stein, E. M.}*{inverted={yes}},
   title={Multiparameter singular integrals and maximal functions},
   language={English, with English and French summaries},
   journal={Ann. Inst. Fourier (Grenoble)},
   volume={42},
   date={1992},
   number={3},
   pages={637--670},
   issn={0373-0956},
   review={\MR{1182643 (94d:42020)}},
}

\bib{Saw1}{article}{
   author={Sawyer, Eric T.}*{inverted={yes}},
   title={Two weight norm inequalities for certain maximal and integral
   operators},
   conference={
      title={Harmonic analysis},
      address={Minneapolis, Minn.},
      date={1981},
   },
   book={
      series={Lecture Notes in Math.},
      volume={908},
      publisher={Springer},
      place={Berlin},
   },
   date={1982},
   pages={102--127},
   review={\MR{654182 (83k:42020b)}},
}

\bib{Saw}{article}{
   author={Sawyer, Eric T.}*{inverted={yes}},
   title={A characterization of a two-weight norm inequality for maximal
   operators},
   journal={Studia Math.},
   volume={75},
   date={1982},
   number={1},
   pages={1--11},
   issn={0039-3223},
   review={\MR{676801 (84i:42032)}},
}

\bib{Sjo}{article}{
   author={Sj{\"o}gren, Peter}*{inverted={yes}},
   title={A remark on the maximal function for measures in ${\bf R}^{n}$},
   journal={Amer. J. Math.},
   volume={105},
   date={1983},
   number={5},
   pages={1231--1233},
   issn={0002-9327},
   review={\MR{714775 (86a:28003)}},
   doi={10.2307/2374340},
}

\bib{SjoSo}{article}{
   author={Sj{\"o}gren, Peter}*{inverted={yes}},
   author={Soria, Fernando}*{inverted={yes}},
   title={Sharp estimates for the non-centered maximal operator associated
   to Gaussian and other radial measures},
   journal={Adv. Math.},
   volume={181},
   date={2004},
   number={2},
   pages={251--275},
   issn={0001-8708},
   review={\MR{2026859 (2004k:42035)}},
}
\bib{Stein}{book}{
   author={Stein, Elias M.}*{inverted={yes}},
   title={Singular integrals and differentiability properties of functions},
   series={Princeton Mathematical Series, No. 30},
   publisher={Princeton University Press},
   place={Princeton, N.J.},
   date={1970},
   pages={xiv+290},
   review={\MR{0290095 (44 \#7280)}},
}

\bib{Vargas}{article}{
   author={Vargas, Ana M.}*{inverted={yes}},
   title={On the maximal function for rotation invariant measures in ${\bf R}^n$},
   journal={Studia Math.},
   volume={110},
   date={1994},
   number={1},
   pages={9--17},
   issn={0039-3223},
   review={\MR{1279371 (95e:42019)}},
}

\end{biblist}
\end{bibsection}

\end{document}